\newcommand{\la}{\langle}
\newcommand{\ra}{\rangle}
\newcommand{\leqs}{\leqslant}
\newcommand{\geqs}{\geqslant}
\newcommand{\Aut}{\operatorname{Aut}}
\newcommand{\Out}{\operatorname{Out}}
\newcommand{\GL}{{\mathrm {GL}}}
\newcommand{\SL}{{\mathrm {SL}}}
\newcommand{\SU}{{\mathrm {SU}}}
\newcommand{\FF}{\mathbb{F}} 
\newcommand{\LL}{\mathrm {L}}
\newcommand{\UU}{\mathrm {U}}
\newcommand{\GU}{\mathrm {GU}}
\newcommand{\Alt}{{\mathrm {A}}}
\newcommand{\Sym}{{\mathrm {S}}}
\newcommand{\Syl}{{\mathrm {Syl}}}
\newcommand{\imod}[1]{\allowbreak\mkern4mu({\operator@font mod}\,\,#1)}
\renewcommand{\leq}{\leqs}
\renewcommand{\geq}{\geqs}
\newtheorem{theorem}{Theorem} 
\newtheorem*{conj*}{Conjecture}
\newtheorem{corol}{Corollary}
\newtheorem{thm}{Theorem}[section] 
\newtheorem{prop}[thm]{Proposition} 
\newtheorem{lem}[thm]{Lemma}
\theoremstyle{definition}
\newtheorem{rem}[thm]{Remark}
\newtheorem{ex}{Example}
\begin{document}

\title[Extremely closed subgroups]{Extremely closed subgroups and a variant on Glauberman's $Z^*$-theorem}

\author{Hung P. Tong-Viet}
\address{H.P. Tong-Viet, Department of Mathematics and Statistics, Binghamton University, Binghamton, NY 13902-6000, USA}
\email{htongvie@binghamton.edu}

\renewcommand{\shortauthors}{Tong-Viet}

\begin{abstract} Let $G$ be a finite group and let $H$ be a subgroup of $G$. We say that $H$ is  extremely closed in $G$ if $\langle H,H^g\rangle\cap N_G(H)=H$ for all $g\in G.$ In this paper, we determine the structure of finite groups with an  extremely closed abelian $p$-subgroup for some prime $p$. In particular, we show that if $G$ contains such a subgroup $H$, then  $G=N_G(H)O_{p'}(G).$  This is a variant on the celebrated Glauberman's $Z^*$-Theorem.
\end{abstract}

\date{\today}
\keywords{extremely closed, strongly closed, weakly closed}

 \subjclass[2020]{Primary 20D05, 20D15}

\maketitle

\section{Introduction}
It is an important problem in finite group theory to determine whether a finite group is simple or not. Many non-simplicity criteria have been obtained in the literature.  Among those is the celebrated Glauberman's $Z^*$-theorem. To state this theorem, we need some definitions. Let $G$ be a finite group and let $p$ be a prime. Let $x\in G$ be a $p$-element and let $P$ be a Sylow $p$-subgroup of $G$ containing $x$. We say that $x$ is isolated in $P$ with respect to $G$ if $x^G\cap P=\{x\}$, that is, $x$ is not conjugate in $G$ to any element in $P$ except for $x$ itself. Here, $x^G$ denotes the conjugacy class of $G$ containing $x$. We say that $x$ is isolated in $G$ if $x$ is isolated in some Sylow $p$-subgroup of $G$ containing it. Glauberman's $Z^*$-theorem \cite{Glauberman} states that if $x\in G$ is an isolated involution in $G$, then $G=C_G(x)O_{2'}(G).$ The proof of this theorem depends on the modular representation theory and is independent of the classification of finite simple groups. Recall that for a prime $p$, $O_{p'}(G)$ is the largest normal $p'$-subgroup of $G$. Extending this fundamental theorem to all primes,  Glauberman's $Z_p^*$-theorem states that if $x\in G$ is an isolated $p$-element, then $G=C_G(x)O_{p'}(G)$. For various proofs of this theorem, see \cite{Ar, GGLN, GR, Tong-Viet}. Note that all of these proofs depend on the classification of finite simple groups. For  many equivalent statements of this theorem, see \cite{GTT}. Also, see \cite{FR} for some variant of Glauberman's $Z_p^*$-theorem.

In this paper, we introduce the so-called \emph{extremely closed subgroup} and obtain some new factorization of finite groups similar to Glauberman's $Z^*_p$-theorem which  gives some non-simplicity criteria for finite groups.
Let $G$ be a finite group and let $H\leq M$ be subgroups of $G$. We say that $H$ is  extremely closed in $M$ with respect to $G$ if $\la H,H^g\ra\cap M=H$ for all $g\in G-M,$ and that $H$ is extremely closed in $G$ if $H$ is extremely closed in $N_G(H)$ with respect to $G$. Trivially,  if $H$ is a normal or self-normalizing subgroup of $G,$ then $H$ is extremely closed in $G.$

 If $G$ is a finite group, we write $Z(G)$ for the center of $G$ and $\Phi(G)$ for the Frattini subgroup of $G$, that is,  the intersection of all maximal subgroups of $G$. Furthermore, if $H$ is a subgroup of $G$, then $\la H^G\ra$ is the normal closure of $H$ in $G$. We next compare our definition of extremely closed subgroups with other known embedding properties of subgroups of finite groups. 
 
The first motivation for our definition comes from work of Flavell  \cite{Fla1} on the generation of finite groups with maximal subgroups of maximal subgroups. In particular, a triple $(G,M,H)$ with $H\leq M\leq G$ is called a $\gamma$-triple if $H<M<G$ and $\la H,g\ra\cap M=H$ for all $g\in G-M.$ If $H$ is maximal in $M$ and $M$ is maximal in $G$ and moreover, $G$ cannot be generated by any two conjugates of $H$, then $(G,M,H)$ is a $\gamma$-triple. Clearly, if  $(G,M,H)$ is a $\gamma$-triple, then $H$ is extremely closed in $M$ with respect to $G$. The converse is not true by the example below.
  \begin{ex}\label{ex1}
  Let $G=\Sym_4$ be the symmetric group of degree $4$. Let $H=\la (1,2,3)\ra$ and $M=N_G(H)\cong \Sym_3$. Then $\la H^G\ra=\Alt_4$ and $\la H^G\ra\cap M=H.$ So $\la H,H^g\ra\cap M=H$ for all $g\in G$, and hence $H$ is extremely closed in $M$ with respect to $G$. However, let $g=(1,3,2,4)\in G-M$. Then $\la H,g\ra=G$ and so $\la H,g\ra\cap M=M\neq H$. Therefore $(G,M,H)$ is not a $\gamma$-triple.
  \end{ex}
 
 For the second motivation, following Hawkes and Humphreys, a subgroup $M$ of a finite group $G$ is said to have property CR (character restriction)  if every irreducible complex character of $M$ is the restriction of a character of $G$. In \cite{HH}, the authors studied finite solvable groups with a CR-subgroup  and the general cases were considered  by Isaacs in \cite{Isaacs}. One important property of a CR-subgroup $M$ of a finite group $G$ is that if $H\unlhd M$, then $\la H^G\ra\cap M=H$ (See \cite[Proposition 1.1]{Isaacs}).  Berkovich \cite{Berkovich} called  a triple $(G,M,H)$ with $H\unlhd M\leq G$  \textit{special} in $G$ if $\langle H^G\rangle \cap M=H.$ (In \cite{Li}, Li calls $H$ an NE-subgroup of $G$ if $(G,N_G(H),H)$ is special in $G$.) In \cite{Isaacs}, Isaacs showed that if $P$ is a Sylow $p$-subgroup of $G$, where $p$ is a prime and assume that $N_G(P)$ satisfies CR in $G$, then $N_G(P)$ has a normal complement in $G$. This result was extended by Berkovich \cite{Berkovich}, where he showed that if  both triples $(G,N_G(P),P)$  and $(G,N_G(P),\Phi(P))$ are special in $G$, then $N_G(P)$ has a normal complement in $G$. This gives a character theory free proof of Isaacs' result mentioned earlier. Observe that if a triple $(G,N_G(H),H)$ is special in $G,$ then $H$ is extremely closed in $G$. However, the converse is not true. 
 
\begin{ex}\label{ex2}
Let $G=P:H$  be a semidirect product of $H$ and $P,$ where $H=\la a\ra$ is a cyclic group of order $2$ and $P\cong 3_+^{1+2}$ is an extraspecial group of order $27$ with exponent $3,$  so $$P=\la x,y,z| z=[x,y],x^3=y^3=z^3=1=[x,z]=[y,z]\ra$$ and $H$ acts on $P$ via $x^a=x^{-1},y^a=y^{-1}$ and $z^a=z.$  Then $N_G(H)=C_G(H)=H\la z\ra,$ and $\la H^G\ra=G.$ For every $g\in G-N_G(H),$ we have $T=\la H,H^g\ra$ is a dihedral group of order $6,$ so  $N_T(H)=H,$ and hence $H$ is extremely closed in $G$ but $(G,N_G(H),H)$ is not a special triple since $\la H^G\ra\cap N_G(H)=H\la z\ra\neq H.$ 

\end{ex} 
  
Finally, we mention the last inspiration for our new embedding property.  Let $H\leq M$ be subgroups of a finite group $G$. Recall that $H$ is said to be {\em strongly closed} in $M$ with respect to $G$ if, whenever $a^g\in M,$ where $a\in H, g\in G$, then $a^g\in H.$ This is equivalent to saying that $M\cap H^g\leq H$ for all $g\in G.$ Furthermore, we say that $H$ is strongly closed in $G$ if $H$ is strongly closed in $N_G(H)$ with respect to $G$. Noting that in \cite{Bianchi}, $H$ is called an $\mathscr{H}$-subgroup of $G$ if $H$ is strongly closed in  $G$.
   If $H=\la x\ra$ is cyclic of order $2$, then $H$ is strongly closed in $G$ if and only if $x$ is isolated in $G$. Finite groups with a strongly closed $p$-subgroup are determined in  \cite{FF,Foote,Gold}.
It is easy to see that if $H$ is extremely closed in $G$, then $H$ is strongly closed in $G$.

 \begin{ex}\label{ex3}
Let $G=\UU_3(4)$. By \cite{Gold}, if $P$ is a Sylow $2$-subgroup of $G$, then $H=Z(P)=\Phi(P)$ is a strongly closed abelian $2$-subgroup of $G$. Using GAP \cite{GAP}, we can find $g\in G$ of order $15$ such that $\la H,H^g\ra\cong \Alt_5\cong \SU_2(4)$, $N_G(H)\cong P\la g\ra$ and $\la H,H^g\ra\cap N_G(H)\cong \Alt_4\neq H$. Thus  $H$ is not extremely closed in $G$.
\end{ex}

Finally, we recall the definition of weakly closed subgroups. Let $H\leq M$ be subgroups of a finite group $G$. We say that  $H$ is {\em weakly closed} in $M$ with respect to $G$ if, whenever $H^g\subseteq M,$ where $g\in G,$ then $H^g=H.$   It is  easy to see that if $H$ is strongly closed in $M$ with respect to $G$, then $H$ is weakly closed in $M$ with respect to $G$. Moreover, when $H$ is cyclic of prime order, these two concepts coincide.
We know that if $H\unlhd M\leq G$ and $\la H^G\ra\cap M=H$, then $H$ is extremely closed in $M$ with respect to $G$. In our first result, we show that in certain cases, the converse holds.

\begin{theorem}\label{th1} Let $G$ be a finite group and let $H\leq M$ be subgroups of $G$. Suppose that $\la H,H^g\ra\cap M=H$ for all $g\in G.$ If $H$ is maximal in $M$ and $M$ is maximal in $G,$ then  $\la H^G\ra\cap M=H.$
\end{theorem}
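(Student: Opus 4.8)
The plan is to argue by contradiction: suppose $K := \la H^G\ra \cap M$ properly contains $H$. Since $H \unlhd M$ (as $H$ is maximal in $M$, hence normal if... wait, maximal subgroups need not be normal) — let me be careful here. Actually we only know $H$ is maximal in $M$, so $H$ need not be normal in $M$, but the hypothesis $\la H, H^g\ra \cap M = H$ for all $g \in G$ does tell us $H$ is extremely closed in $M$ with respect to $G$, and in particular (taking $g$ ranging over $M$) that $H$ is weakly closed, hence $H$ is the unique conjugate of $H$ inside $M$... no, weakly closed just says no other $M$-conjugate. Let me instead use: $N := \la H^G\ra$ is normal in $G$, and $N \cap M$ is a subgroup of $M$ containing $H$. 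Because $H$ is maximal in $M$, either $N \cap M = H$ (done) or $N \cap M = M$, i.e.\ $M \le N = \la H^G\ra$.

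So the crux is to rule out $M \le \la H^G\ra$. Assume this. Since $M$ is maximal in $G$ and $\la H^G\ra \unlhd G$ with $M \le \la H^G\ra$, either $\la H^G\ra = M$ or $\la H^G\ra = G$. If $\la H^G\ra = M$, then $M \unlhd G$; but then for any $g \in G$, $H^g \le M$, and the hypothesis with such $g$ gives $\la H, H^g\ra \cap M = \la H, H^g\ra = H$, forcing $H^g = H$, so $H \unlhd G$, whence $\la H^G\ra = H \ne M$, contradiction (here I use $H < M$). Therefore $\la H^G\ra = G$. Now I want to derive a contradiction from: $H$ maximal in $M$, $M$ maximal in $G$, $\la H^G\ra = G$, and $\la H, H^g\ra \cap M = H$ for all $g$. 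The key observation is that $G$ is generated by the conjugates of $H$ that lie in $M$ together with those that don't; more usefully, pick $g \in G \setminus M$ and set $L := \la H, H^g\ra$. By hypothesis $L \cap M = H$, so $L \not\le M$, and since $M$ is maximal, $\la L, M\ra = G$; but also $L \cap M = H$ forces $L$ to be "small" relative to $M$ in the sense that $|G : M| \ge |L : H|$ is not immediately a contradiction.

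The real engine should be a counting/double-coset argument combined with the maximality of $H$ in $M$: consider the permutation action of $G$ on $\O := G/M$, the right cosets of $M$. The point stabilizer of the trivial coset is $M$, and $H \le M$ fixes that point. The hypothesis $\la H, H^g\ra \cap M = H$ says precisely that for $g \notin M$, the subgroup $\la H, H^g\ra$ does not stabilize the coset $Mg$... more precisely, $H$ and $H^g$ have no common fixed point other than forced ones, controlling the fixed-point structure of $H$ on $\O$. I expect the main obstacle, and the heart of the proof, to be showing that $\la H^G\ra = G$ together with extreme closure forces $H$ to act on $G/M$ with so few fixed points that $M$ could not be maximal — essentially that $M = N_G(H)\,\langle\,\text{one other conjugate}\,\rangle$ type structure collapses; alternatively one shows $\la H, H^g \ra$ is itself maximal or all of $G$ for a suitable $g$, contradicting $\la H,H^g\ra \cap M = H < M$ via a transitivity argument on $G/M$. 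I would look for the cleanest route by first reducing to the case $\soc$-type or by invoking that $\la H^G \ra \cap M = H$ is equivalent to $H$ being extremely closed and then quoting that extremely closed implies strongly closed, so $H$ is strongly closed in $M$ with respect to $G$, and running Foote–Goldschmidt-style analysis — but for a clean self-contained proof, the double-coset count on $G/M$ is the approach I would push hardest.
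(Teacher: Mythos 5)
Your opening reduction is sound: taking $g\in M$ in the hypothesis gives $\la H,H^g\ra\cap M=\la H,H^g\ra=H$, hence $H\unlhd M$ and $|M:H|$ is prime; consequently $\la H^G\ra\cap M$, being a subgroup of $M$ containing the maximal (normal) subgroup $H$, is either $H$ or $M$, and your elimination of the case $\la H^G\ra=M$ is correct. But the proposal stops exactly where the real work begins. Everything after ``Now I want to derive a contradiction'' is a list of candidate strategies (``I expect\dots'', ``I would look for\dots'') with no completed argument: the double-coset/fixed-point count on $G/M$ is never carried out, and there is no indication it would close; the Goldschmidt--Foote machinery you float is unavailable because $H$ is not assumed to be a $p$-group. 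So the case $\la H^G\ra=G$ is not actually excluded, and excluding it is the entire content of the theorem.

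The missing engine is Wielandt's normal-complement theorem. The paper's argument runs a dichotomy over subgroups $D\le M$ with $D\not\le H$. If some such $D$ satisfies $N_G(D)\not\le M$, pick $g\in N_G(D)-M$; then $M=DH$ and $M^g=DH^g$, and $D$ normalizes both $H$ and $H^g$ (as $D\le M\cap M^g=N_G(H)\cap N_G(H^g)$ in the relevant situation), so $G=\la M,M^g\ra=D\la H,H^g\ra$ and therefore $\la H^G\ra=\la H,H^g\ra$, whose intersection with $M$ is $H$ by hypothesis. If instead $N_G(D)\le M$ for every such $D$, then $(G,M,H)$ is a $W$-triple by Lemma \ref{wiefla}, and Lemma \ref{Wielandt} produces a normal subgroup $K$ with $G=MK$ and $M\cap K=H$, which yields $\la H^G\ra\cap M=H$. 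Without this dichotomy and Wielandt's theorem (or a genuine substitute for them), your proposal has a gap precisely at its crux.
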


Let $H\leq M$ be subgroups of a finite group $G$. Recall that $H$ is called  a weak second maximal subgroup of $G$ if there exists a maximal subgroup $M$ of $G$ such that $H$ is maximal in $M$.  Moreover,  $H$ is called a second maximal subgroup of $G$ if $H\neq G$ and it is maximal in every maximal subgroup of $G$ containing it. In \cite{Fla1}, Flavell shows that if $G$ is a finite non-abelian simple group and $H$ is a weak second maximal subgroup of $G$, then $G=\la H,g\ra$ for some $g\in G.$ And in \cite{Fla2}, it is shown that if $H$ is a second maximal subgroup of a finite non-abelian simple group $G$, then $G=\la H,H^g\ra$ for some $g\in G.$ 
 As a corollary to Theorem \ref{th1}, we  obtain the following generation result for finite non-abelian simple groups. 

\begin{corol}\label{cor:generation}
Let $G$ be a finite non-abelian simple group. Let $M$ be a maximal subgroup of $G$ and let $H$ be a normal subgroup of $M$ of prime index. Then $G=\la H,H^g\ra$ for some $g\in G.$
\end{corol}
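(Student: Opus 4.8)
The idea is to derive the corollary from Theorem \ref{th1} by contradiction, exploiting the simplicity of $G$ to kill the normal closure $\la H^G\ra$. So suppose, for contradiction, that $G \neq \la H, H^g\ra$ for every $g \in G$; equivalently, $\la H, H^g\ra \neq G$ for all $g$. Since $M$ is maximal in $G$, any proper subgroup of $G$ containing $H$ is either contained in $M$ (after noting $H \leq M$) or is not; the cleaner route is: I want to show $\la H,H^g\ra \cap M = H$ for all $g \in G$, so that Theorem \ref{th1} applies (with the hypotheses ``$H$ maximal in $M$'' and ``$M$ maximal in $G$'' both holding, the first because $H \normal M$ has prime index, the second by assumption). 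Given $\la H^G\ra \cap M = H$, I then observe that $\la H^G\ra$ is a normal subgroup of the simple group $G$, hence is either trivial or all of $G$; it is nontrivial since $H \neq 1$ (as $M \neq G$ forces $H \lneq M \leq G$, and $M$ maximal in a nonabelian simple group is nontrivial, so $H$ of prime index in $M$ is nontrivial unless $|M|$ is prime — but a nonabelian simple group has no maximal subgroup of prime order, so $H \neq 1$). Therefore $\la H^G\ra = G$, which combined with $\la H^G\ra \cap M = H$ gives $G \cap M = H$, i.e. $M = H$, contradicting $H \lneq M$.

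The crux is thus to verify the hypothesis of Theorem \ref{th1}, namely $\la H, H^g\ra \cap M = H$ for all $g \in G$, from the assumption that no two conjugates of $H$ generate $G$. Fix $g \in G$ and set $T = \la H, H^g\ra$. If $g \in M$, then since $H \normal M$ we have $H^g = H$, so $T = H$ and the claim is clear. So assume $g \in G - M$. By assumption $T \neq G$, so $T$ is a proper subgroup of $G$ properly containing $H$ (it contains the conjugate $H^g \neq H$, else $g \in N_G(H)$; but we need $H^g \neq H$ — actually if $H^g = H$ then $T = H \leq M$ and we are done anyway, so assume $H^g \neq H$ and hence $H \lneq T \lneq G$). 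Now I claim $T \cap M = H$. Since $H$ is maximal in $M$ and $H \leq T \cap M \leq M$, either $T \cap M = H$ or $T \cap M = M$, i.e. $M \leq T$. In the latter case $T$ is a subgroup containing the maximal subgroup $M$, so $T = M$ or $T = G$; since $T \neq G$, we get $T = M$, whence $H^g \leq T = M$, so $H^g \normal M$ (it is normal in $M$ as a conjugate — wait, need care).

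Here is the delicate point, and I expect it to be the main obstacle: ruling out $M \leq T$, equivalently $T = M$. If $T = \la H, H^g\ra = M$ with $g \in G - M$, then $H^g \leq M$; but I do not immediately know $H^g \normal M$. However, $H^g$ is a subgroup of $M$ of the same order as $H$, hence of prime index in $M$, hence maximal in $M$; and $M = \la H, H^g \ra$ with $H, H^g$ both normal of prime index would force (if they were equal) $T = H \neq M$, so $H \neq H^g$, and $H \cap H^g \normal M$ with $M/(H\cap H^g)$ generated by two subgroups of prime index... this should force $M/(H \cap H^g)$ to be small (a quotient generated by two distinct maximal normal subgroups is a product of the two, and the intersection of the two maximals is trivial in the quotient, giving $|M/(H\cap H^g)| = p q$ or $p^2$ for primes $p,q$). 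In any case, I would argue that $T = M$ forces a strong structural restriction that, combined with $H \normal M$, actually allows one to enlarge: one shows $N_G(T) > T$ or finds a further conjugate pushing $T$ up, contradicting $T \neq G$ via Flavell's generation theorem for weak second maximal subgroups (\cite{Fla1}) or a direct argument — but since the problem is stated as a \emph{corollary} of Theorem \ref{th1}, the intended route must be: show $T \cap M = H$ directly, perhaps by simply \emph{defining away} the case $g \notin M$ through the observation that $T \cap M \supsetneq H$ forces $M \leq T \leq \la H^G\ra$, and then $\la H^G \ra \supseteq M$ together with normality and simplicity gives $\la H^G\ra = G$ again, so $T \cap M$ can only be $H$ or the whole configuration collapses as above. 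I would carry out this fixed-point/maximality bookkeeping carefully, treating $T = M$ as the sole hard case and eliminating it by noting $M$ would then be generated by two distinct proper normal subgroups $H$ and $H^g$ of prime index, forcing $M = H H^g$ and hence $[M:H^g]$ dividing $[H:H \cap H^g]$, which combined with $H \normal M$ and $g \in G-M$ produces the needed contradiction with the simplicity of $G$ (as $\la H^G \ra$ would still have to equal $G$, forcing $M = H$).
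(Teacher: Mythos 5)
Your overall skeleton matches the paper's: argue by contradiction, eliminate $H=1$ via Burnside's normal $p$-complement theorem, use simplicity to get $\la H^G\ra=G$, and reduce everything via Theorem \ref{th1} and the maximality of $H$ in $M$ and of $M$ in $G$ to the single configuration $\la H,H^g\ra=M$ for some $g\in G-M$. That reduction is correct. The problem is that you never actually eliminate this last configuration, and that is where all the content of the corollary lies.

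Your attempted argument for the case $\la H,H^g\ra=M$ rests on treating $H^g$ as a \emph{normal} subgroup of $M$ of prime index, so that $M$ is ``generated by two distinct proper normal subgroups of prime index.'' But $H^g$ is normal in $M^g=N_G(H)^g=N_G(H^g)$, not in $M$, and $M\neq M^g$ here since $g\notin M=N_G(M)$; you flag this yourself (``wait, need care,'' ``I do not immediately know $H^g\unlhd M$'') and then proceed as if it were resolved. The closing claim that ``$\la H^G\ra$ would still have to equal $G$, forcing $M=H$'' is circular: you are in exactly the case where the hypothesis of Theorem \ref{th1} fails, so you cannot invoke its conclusion $\la H^G\ra\cap M=H$. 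A second, genuinely new idea is needed, and the paper supplies one: since $N_G(H^g)=M^g\neq M$, pick $a\in M^g-M$; then $H^aH^g=M^a$ is a subgroup (as $H^a\unlhd M^a=\la H^a,H^g\ra$), one checks $G=H^g\la H,H^a\ra$, and since $H^g\leq M=N_G(H)$ normalizes $H$ this gives $\la H^G\ra=\la H^{\la H,H^a\ra}\ra\leq\la H,H^a\ra$, i.e. $\la H,H^a\ra=G$, contradicting the standing assumption. Note that the configuration you must exclude is not vacuous on soft grounds: for $G=\Alt_5$, $M\cong\Sym_3$ and $H\in\Syl_2(M)$ (non-normal in $M$) one really does have $\la H,H^g\ra=M$ for suitable $g\notin M$, and the corollary fails there. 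So any correct proof must use $H\unlhd M$ in an essential way at precisely this point, which your sketch does not.
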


 Note that if $G=\Alt_5, M=\Sym_3$ and $H$ is a Sylow $2$-subgroup of  $M$, then $M$ is a maximal subgroup of $G$ and $|M:H|=3$ is a prime, but $G$ cannot be generated by any two conjugates of $H$. So we cannot drop the hypothesis that $H\unlhd M$ in Corollary \ref{cor:generation}.  We should mention that Flavell \cite{Fla3} asks whether a finite non-abelian simple group can be generated by two conjugates of a self-normalizing subgroup.

Let $G$ be a finite group and let $p$ be a prime. We now focus on extremely closed  $p$-subgroups. Let  $H$  be an extremely closed $p$-subgroup of $G$.   We first assume that $H=\la x\ra$ is cyclic of order $p.$ If $p=2,$ then it is not hard to see that $x$ is isolated in  $G$ or equivalently $H$ is strongly closed in $G$ and so $G=C_G(x)O_{2'}(G)$ by Glauberman's $Z^*$-Theorem. In particular, $G$ is not simple. When $p$ is odd, we note that there exists a simple group with a weakly closed or strongly closed subgroup of order $p,$ for instance, when a Sylow $p$-subgroup of $G$ is cyclic. However, when $H$ is extremely closed in $G$, the subgroups generated by any two distinct conjugates of $H$ are Frobenius groups. By applying a result due to B. Fischer \cite{Fis} concerning Frobenius automorphisms, we can show that $(G,N_G(H),H)$ is special in $G,$ and then we obtain a factorization similar to that of Glauberman's $Z_p^*$-Theorem.

\begin{theorem}\label{th2} Let $G$ be a finite group and let $p$ be an odd prime divisor of the order of $G$. Let $H$ be a cyclic subgroup of order  $p$ of $G$. If $\la H,H^g\ra\cap N_G(H)=H$ for all $g\in G$,  then $\la H^G\ra\cap N_G(H)=H$. In particular, $G=N_G(H)O_{p'}(G)$.
\end{theorem}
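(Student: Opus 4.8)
The plan is to reduce the statement to the special-triple property, which will then deliver both the normal-closure identity and the factorization. First I would observe that the hypothesis $\langle H, H^g\rangle \cap N_G(H) = H$ for all $g$ forces, for any $g \notin N_G(H)$, that the subgroup $T = \langle H, H^g\rangle$ contains $H$ as a self-normalizing subgroup (indeed $N_T(H) \leq N_G(H) \cap T = H$). Since $H$ is cyclic of odd prime order $p$ and $H$ is self-normalizing in $T$, a classical argument shows $T$ is a Frobenius group with Frobenius complement $H$: the key point is that $H$ acts on itself by conjugation with no fixed points outside $H$, and since $|H| = p$ is prime, $H \cap H^g = 1$ whenever $g \notin N_G(H)$, so distinct conjugates of $H$ in $T$ intersect trivially and the normalizer of each is the conjugate itself — this is exactly the Frobenius configuration, and one checks $T = H \ltimes K$ with $K = O_p(T)$ nilpotent by Thompson. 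I would make this precise via a short lemma (or cite the standard fact) that a finite group generated by two distinct conjugates of a self-normalizing subgroup of prime order is Frobenius.

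The heart of the argument is then to invoke Fischer's theorem on fixed-point-free automorphisms of prime order. Consider $N = \langle H^G\rangle$, the normal closure. I want to show $N \cap N_G(H) = H$, equivalently that $N = HK$ where $K = N \cap O_{p'}(N)$ and $H$ acts on $N$ in a suitably fixed-point-free manner. The mechanism: for a generator $x$ of $H$ and any $g \in G$, the subgroup $\langle x, x^g\rangle$ is either contained in $N_G(H)$ (when $g \in N_G(H)$) or Frobenius with complement $\langle x\rangle$ (when $g \notin N_G(H)$), and in the latter case $x$ acts without nontrivial fixed points on the kernel. Piecing this together — this is where Fischer's result \cite{Fis} on Frobenius automorphisms enters — one concludes that $x$ induces a fixed-point-free automorphism of some normal subgroup complementing $H$ inside $N$, forcing that complement to be nilpotent and normal. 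Thus $N = H \ltimes K$ with $K$ nilpotent of order prime to $p$, which gives $\langle H^G\rangle \cap N_G(H) = H$ precisely because the complement $H$ is self-normalizing in the Frobenius group $N$ (any element of $N$ normalizing $H$ lies in $H$).

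For the final factorization $G = N_G(H) O_{p'}(G)$, I would argue as follows. With $N = \langle H^G\rangle = HK$ and $K = O_{p'}(N) \trianglelefteq G$ (it is characteristic in $N$ and $N \trianglelefteq G$), pass to $\bar G = G/K$. In $\bar G$ the image $\bar H$ is a normal subgroup (since $N/K = \bar H$ is normal and $\bar H$ equals the image of $N$), so $\bar N = \langle \bar H^{\bar G}\rangle = \bar H$ is a normal $p$-subgroup of order $p$. Then $\bar G = N_{\bar G}(\bar H) O_{p'}(\bar G)$ trivially since $\bar H \trianglelefteq \bar G$ gives $N_{\bar G}(\bar H) = \bar G$; more to the point, I need $O_{p'}(\bar G)$ to pull back correctly, and a standard argument via the Schur–Zassenhaus theorem or coprime action shows $G = N_G(H) O_{p'}(G)$: writing $L/K = O_{p'}(G/K)$, the group $L = K \rtimes (\text{$p'$-part})$ has $K$ as a normal $p'$-Hall-complemented piece, and one lifts to get $O_{p'}(G) \geq$ a complement, whence $G = N_G(H) O_{p'}(G)$ by a Frattini-type argument applied to the normal subgroup $N$ and the fact that $H$ is a Frobenius complement in $N$, so all $G$-conjugates of $H$ in $N$ are $K$-conjugate (conjugacy of complements in a Frobenius group), yielding $G = K N_G(H) \subseteq O_{p'}(G) N_G(H)$.

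The main obstacle I anticipate is the middle step: rigorously deducing from the local Frobenius conditions on each $\langle x, x^g\rangle$ that $x$ acts fixed-point-freely on a global complement inside $\langle H^G\rangle$. The pairwise Frobenius structure does not immediately globalize — one needs to control how the various kernels fit together, and this is exactly the content one extracts from Fischer's theorem on Frobenius automorphisms (an automorphism of prime order that is "locally" fixed-point-free in the appropriate sense is genuinely fixed-point-free). I would expect to spend most of the proof carefully setting up the hypotheses of \cite{Fis} — identifying the right normal subgroup on which $x$ acts, verifying the fixed-point condition $C_{\langle H^G\rangle}(x) = H$, and handling the interaction with $O_{p'}(G)$ — while the Frobenius-generation lemma and the final factorization are comparatively routine.
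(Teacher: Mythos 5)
Your reduction of each $\langle H,H^g\rangle$ with $g\notin N_G(H)$ to a Frobenius group with complement $H$ is correct, and your final Frattini-type factorization is routine (it is essentially the paper's Lemma \ref{lem:complement}). The genuine gap is in the middle step, and it is exactly where you predicted trouble but misjudged its nature. Fischer's theorem (Lemma \ref{Fisch}) requires that \emph{every} pair of distinct elements of the conjugacy class $D=x^G$ generate a Frobenius group. This hypothesis fails whenever $N_G(H)>C_G(H)$: in that case there is some $g\in N_G(H)\setminus C_G(H)$ with $x^g\in H$ but $x^g\neq x$, so $\langle x,x^g\rangle=H$ is cyclic of order $p$ and is not a Frobenius group. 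Thus Fischer's theorem simply does not apply in general, and no amount of care in ``setting up the hypotheses'' will repair this: the obstruction is not the globalization of the local kernels but the failure of the pairwise-Frobenius hypothesis itself.

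The paper therefore splits into two cases. When $N_G(H)=C_G(H)$, your argument is essentially the paper's step (2): after reducing a minimal counterexample to $G=\langle H^G\rangle$, Fischer's theorem applies, $x$ induces a Frobenius automorphism of $G'$, and $\langle H^G\rangle=G'H$ with $\langle H^G\rangle\cap N_G(H)=H$. When $N_G(H)>C_G(H)$, the paper shows a minimal counterexample must be a non-abelian simple group and then invokes Guest's solvable Baer--Suzuki theorem together with Gorenstein's classification of groups with an isolated $p$-subgroup to force $p=3$, $G\cong\UU_3(3)$ and $x$ a transvection, which is finally excluded because $\langle x,x^g\rangle\cong\SL_2(3)$ is not a Frobenius group. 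This second case is classification-dependent and constitutes most of the work of the theorem; your proposal has no mechanism for handling it.
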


It would be nice to have a proof of Theorem \ref{th2} which does not rely on the classification of finite simple groups.
For an arbitrary prime $p$, if  $H$ is an abelian extremely closed $p$-subgroup of a finite group $G$,  we also obtain a similar factorization $G=N_G(H)O_{p'}(G)$ as in Theorem \ref{th2}.  For even prime, the proof depends only on the classification of finite groups with an abelian strongly closed subgroup by Goldschmidt \cite{Gold} which is independent of the classification of finite simple groups. For odd primes, we make use of a result due to Guest \cite{Guest} on the characterization of solvable radical of finite groups and the classification of finite groups with strongly closed subgroups by Flores and Foote \cite{FF}.

\begin{theorem}\label{th3} Let  $G$ be a finite group and let $p$ be prime. Let $H$ be an abelian $p$-subgroup of $G$. If $\la H,H^g\ra\cap N_G(H)=H$ for all $g\in G,$ then  $G=N_G(H)O_{p'}(G).$ 
\end{theorem}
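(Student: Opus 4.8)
The plan is to reduce Theorem~\ref{th3} to the case where $H$ is \emph{strongly closed} and then to invoke the classification of finite groups with an abelian strongly closed $p$-subgroup. Since $H$ being extremely closed in $G$ immediately implies $H$ is strongly closed in $G$ (as noted in the excerpt), the group $G$ belongs to the list of groups classified by Goldschmidt~\cite{Gold} for $p=2$ and by Flores--Foote~\cite{FF} for odd $p$. So the first step is to set up the standard reduction: we may assume $O_{p'}(G)=1$ (pass to $G/O_{p'}(G)$, noting that the hypothesis on $H$ is inherited by the quotient since $O_{p'}(G)\cap H=1$ and $N_{G/O_{p'}(G)}(HO_{p'}(G)/O_{p'}(G))=N_G(H)O_{p'}(G)/O_{p'}(G)$, and the conclusion $G=N_G(H)O_{p'}(G)$ becomes $G=N_G(H)$, i.e.\ $H\trianglelefteqslant G$). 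Thus the goal reduces to: if $O_{p'}(G)=1$ and $H$ is an abelian extremely closed $p$-subgroup, then $H\trianglelefteqslant G$.

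Next I would analyze the structure provided by the strongly-closed classification. Roughly, when $H$ is a nontrivial abelian strongly closed $p$-subgroup and $O_{p'}(G)=1$, the group $\bar G = G/O_{p'}(G)$ (here already $O_{p'}(G)=1$) has a normal subgroup that is a central product of $O_p(G)$ with components, where each component $L$ is either $H\cap L$ is a known configuration — specifically the components are among $\mathrm{PSL}_2(q)$, $\mathrm{Sz}(q)$, $\mathrm{U}_3(q)$, $\mathrm{PSU}_3(2^n)$, and (for $p=2$) certain others, and $H$ projects onto a strongly closed subgroup of each. The key point is to feed each such configuration into the \emph{extremely closed} hypothesis and derive a contradiction unless $H$ is normal. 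Concretely, if $L$ is a component on which $H$ acts with $H\cap L\neq 1$, then choosing $g\in L$ (or $g\in N_G(L)$) appropriately, $\langle (H\cap L),(H\cap L)^g\rangle$ is a subgroup of $L$ properly containing $H\cap L$ and contained in $N_G(H)\cap L$ in the problematic cases; here is where Example~\ref{ex3} (the $\mathrm{U}_3(4)$ case) is decisive — it shows that in $\mathrm{U}_3(4)$ one \emph{can} find $g$ with $\langle H,H^g\rangle\cap N_G(H)\neq H$, so such a component cannot occur. The plan is to handle $\mathrm{PSL}_2(q)$, $\mathrm{Sz}(q)$, $\mathrm{U}_3(q)$ similarly, showing in each case that a suitable conjugate violates extreme closedness, so that no components survive; then $G = O_p(G)O_{p'}(G)$-by-(p'-group acting), and one argues $H$ must be normal.

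For odd $p$ the argument via~\cite{FF} is parallel but one additionally brings in Guest's characterization~\cite{Guest} of the solvable radical: the point is to control the case where $H$ might meet several components or where the action on the layer $E(G)$ is complicated, using that an element $g$ for which $\langle H,H^g\rangle$ fails to be ``small'' would push $g$ into the solvable radical, forcing $G/\mathrm{Rad}(G)$ to act in a constrained way on $H$; combined with Theorem~\ref{th2} (the cyclic-of-order-$p$ case, applied to subgroups of $H$ of order $p$ after suitable reduction) one pins down the normalizer. The main obstacle, I expect, is the bookkeeping in the component analysis: one must verify for \emph{every} possible component type and \emph{every} possible isomorphism type of the strongly closed subgroup $H\cap L$ inside it that the extremely-closed hypothesis forces either $H\cap L=1$ or $L\leqslant N_G(H)$ — this requires explicit knowledge of the subgroup structure (in particular of overgroups of $H\cap L$ and of which pairs of conjugates generate) in $\mathrm{PSL}_2(q)$, $\mathrm{Sz}(q)$, $\mathrm{PSU}_3(q)$, and handling the interaction with diagonal and field automorphisms in $\Aut(L)$. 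A secondary subtlety is ensuring the extremely-closed hypothesis genuinely descends to the relevant sections (quotients by $O_{p'}$, and restriction to $N_G(L)$-invariant pieces), which needs the same kind of lemma used implicitly in Theorems~\ref{th1} and~\ref{th2}.
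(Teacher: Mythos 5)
Your overall strategy---reduce modulo $O_{p'}(G)$, observe that extremely closed implies strongly closed, invoke Goldschmidt for $p=2$ and Flores--Foote/Guest for odd $p$, and then kill each surviving component using the extremely closed hypothesis---is the same skeleton as the paper's proof. But the ``bookkeeping'' you defer is where essentially all of the content lives, and at least one of your concrete claims would not carry the argument. For $p=2$, Example~\ref{ex3} is a GAP computation for the single group $\UU_3(4)$; it cannot be extended ``similarly'' to the families $\LL_2(2^n)$, ${}^2{\rm B}_2(q)$, $\UU_3(2^n)$, to $\LL_2(q)$ with $q\equiv 3,5\pmod 8$, or to ${}^2{\rm G}_2(q)$ and ${\rm J}_1$ without a uniform mechanism. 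The paper's mechanism is: first dispose of $|H|=2$ by Glauberman's $Z^*$-theorem (which you never invoke), so $H$ may be assumed non-cyclic; then, writing $T=\la H,H^g\ra=HO_{2'}(T)$ (Burnside, since $N_T(H)=H$ and $H$ is abelian), use the coprime generation identity $O_{2'}(T)=\la C_{O_{2'}(T)}(a):1\neq a\in H\ra$ together with the fact that every $C_G(a)$ lies in the unique maximal overgroup $B$ of the T.I.\ Sylow $2$-subgroup, to force $T\leq B$ and hence $H^g=H$. This identity fails for cyclic $H$, which is exactly why the $Z^*$-theorem must be quoted first; the dihedral-Sylow and Ree/${\rm J}_1$ cases then need separate generation and involution-centralizer arguments. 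None of this is visible in your plan.

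For odd $p$ the missing lever is similar: the hypothesis gives $N_T(H)=H$ with $H$ an abelian Sylow $p$-subgroup of $T=\la H,H^g\ra$, so Burnside yields $T=HO_{p'}(T)$ with $C_{O_{p'}(T)}(H)=1$, and fixed-point-free coprime action makes $T$ \emph{solvable}. It is this solvability of every $\la H,H^g\ra$ (hence of every $\la x,x^g\ra$ for $x\in H$) that lets Guest's Lemma~1 force $H$ to normalize every component (so the minimal configuration is almost simple) and lets Theorem~A* reduce to $p=3$ with socle of Lie type over $\FF_3$ or $\UU_n(2)$; the residual cases are then eliminated by a parabolic-subgroup/$F^*$ argument (or \cite[Theorem 1.2]{FF}), not by routine checking. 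Your description of ``pushing $g$ into the solvable radical'' does not identify this chain. Finally, note that the paper does not work inside a general central product as you propose: it first establishes $\la H^G\ra=G$, $O_p(G)=O_{p'}(G)=1$, and $G=HN$ for a unique minimal normal subgroup, which is what makes the component analysis tractable; you would need analogues of these reductions (and of Lemma~\ref{induct}, for descent of the hypothesis to quotients via pronormality) before your case analysis could begin.
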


 Remark that we cannot drop the hypothesis that $H$ is abelian when $p=2$ in Theorem \ref{th3}, since the simple group $G=\LL_2(17)$ has a self-normalizing Sylow $2$-subgroup $P$ which is non-abelian and so $P$ is clearly an extremely closed $2$-subgroup of $G$. Note that if $|H|=2$, then Theorem \ref{th3} is just Glauberman's $Z^*$- theorem. 
Example \ref{ex2}  above shows that  an abelian extremely closed $2$-subgroup $H$ may not satisfy the condition $\la H^G\ra\cap N_G(H)=H$. However, this holds true for odd primes. We obtain the following as a corollary to Theorem \ref{th3}.

 \begin{corol}\label{cor:special}
Let $G$ be a finite group and let $p$ be an odd prime. Let $H$ be an abelian $p$-subgroup of $G$. If $\la H,H^g\ra\cap N_G(H)=H$ for all $g\in G,$ then   $\la H^G\ra\cap N_G(H)=H$.
\end{corol}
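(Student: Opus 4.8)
The plan is to derive Corollary~\ref{cor:special} from Theorem~\ref{th3} by a structural reduction, and then to eliminate the one remaining ``bad'' configuration by using extreme closedness together with the hypothesis that $p$ is odd. First I would invoke Theorem~\ref{th3} to write $G=N_G(H)K$ with $K:=O_{p'}(G)$. Since every $g\in G$ factors as $g=nk$ with $n\in N_G(H)$ and $k\in K$, we get $H^g=H^k$, so $\la H^G\ra=\la H^K\ra=:L$; as $K\unlhd G$, this $L$ is normal in $G$ and contained in $HK$. Because $H\cap K=1$, the subgroup $D:=L\cap K$ is a normal $p'$-subgroup of $L$ with $L/D\cong H$, whence $L=D\rtimes H$ with $H\in\Syl_p(L)$ and $D=O_{p'}(L)$; moreover $L=\la H^L\ra$, since every conjugate of $H$ in $L$ is a Sylow $p$-subgroup of $L$ and hence $L$-conjugate to $H$. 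Finally $N_G(H)\cap L=N_L(H)$, and Dedekind's modular law gives $N_L(H)=N_D(H)H$, so the corollary is equivalent to proving that $N_D(H)=1$.

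Next I would extract the elementary content of extreme closedness here. For $d\in D$ the group $\la H,H^d\ra$ meets $D$ in a normal $p'$-subgroup $E_d$ with $\la H,H^d\ra=E_d\rtimes H$ (as $H$ already surjects onto $L/D$), and $E_d$ contains $[H,d]=\la[h,d]:h\in H\ra$ because $h^{-1}h^d=[h,d]\in D$ for $h\in H$. Applying the hypothesis $\la H,H^d\ra\cap N_G(H)=H$ together with the modular law inside $\la H,H^d\ra$ gives $N_{E_d}(H)=1$, hence $[H,d]\cap N_D(H)=1$ for every $d\in D$. Since the conjugates $H^d$ $(d\in D)$ generate $L$, the subgroups $\{[H,d]:d\in D\}$ together with $H$ generate $L$; combined with standard coprime-action facts — e.g. $D=[D,H]C_D(H)$, so $D\neq1$ forces $[D,H]\neq1$, and $C_D(H)=D$ forces $D=1$ — this should set up an induction on $|G|$ in which one may assume, say, that $D$ is a minimal normal subgroup of $L$.

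The hard part will be deducing $N_D(H)=1$: the condition ``$[H,d]\cap N_D(H)=1$ for all $d\in D$'' does \emph{not} by itself force $N_D(H)=1$ — for $p=2$ it genuinely fails, as Example~\ref{ex2} shows (there $L=3_+^{1+2}\rtimes C_2$, $D=3_+^{1+2}$, $N_D(H)=C_D(H)=\la z\ra\neq1$, yet $[H,d]\cap N_D(H)=1$ for every $d\in D$). So the oddness of $p$ must be used in an essential way, and I expect the right input is a Frobenius-type rigidity statement of the kind supplied by B.~Fischer's theorem on Frobenius automorphisms — the same ingredient underlying Theorem~\ref{th2} — which for odd $p$ prevents a nontrivial $H$-invariant subgroup from surviving inside the product of the commutator subgroups $[H,d]$. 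Concretely, after the reduction to a minimal $L=D\rtimes H$ one would invoke the classification of finite groups possessing a strongly closed abelian $p$-subgroup (Flores--Foote \cite{FF} for $p$ odd, the tool behind Theorem~\ref{th3}) to pin down the possibilities for $L$, and check in each surviving case that the extreme-closure hypothesis collapses $N_D(H)$ to the trivial subgroup.
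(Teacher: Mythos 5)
Your reduction coincides with the paper's: both arguments apply Theorem~\ref{th3} to get $G=N_G(H)O_{p'}(G)$, deduce $\la H^G\ra=\la H^{O_{p'}(G)}\ra\leq HO_{p'}(G)$, pass (by minimality/pronormality) to the case $G=\la H^G\ra=HU$ with $U=O_{p'}(G)$, and reduce the claim to $N_U(H)=C_U(H)=1$, having in hand the local information $C_{[u,H]}(H)=N_{[u,H]}(H)\leq\la H,H^u\ra\cap N_G(H)\cap U=1$ for each $u\in U$. But at exactly the step you flag as ``the hard part'' you stop and conjecture, and the tool you guess is not the right one: Fischer's theorem (Lemma~\ref{Fisch}) concerns a conjugacy class of \emph{elements} of order $>2$ whose pairs generate Frobenius groups, which is why it is used in Theorem~\ref{th2} where $|H|=p$, but it gives nothing for a general abelian $p$-group $H$. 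The tool the paper actually uses --- and the precise point where the oddness of $p$ enters --- is Flavell's coprime fixed-point theorem, quoted as Lemma~\ref{lem:generators}(i): if a $\pi$-group $H$, with $\pi$ a set of \emph{odd} primes, acts on a solvable $\pi'$-group $U$, then $C_{[U,H]}(H)=\la C_{[u,H]}(H):u\in U\ra$. Combined with $U=[U,H]$ (which follows from $G=\la H^G\ra$ and the coprime decomposition $U=[U,H]C_U(H)$), the vanishing of every $C_{[u,H]}(H)$ forces $C_U(H)=1$, i.e.\ $N_D(H)=1$ in your notation. This theorem is false for $p=2$, which is exactly why Example~\ref{ex2} is not a counterexample to the corollary; your observation that the local conditions alone do not suffice is correct, but the argument is not completed without this lemma.

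A second genuine omission: Flavell's theorem requires the $p'$-group to be \emph{solvable}, so one must split according to whether a minimal normal subgroup $N\leq U$ of $G$ is abelian or a product of nonabelian simple groups. In the first case $U$ is solvable (the paper shows $U/N$ is solvable via Lemma~\ref{lem:coprime}) and the argument above applies; in the second the paper reduces to $G=HN$ with $N$ the unique minimal normal subgroup and invokes Proposition~\ref{prop:Guest}, whose proof rests on Guest's solvable Baer--Suzuki theorem and the Flores--Foote classification. Your closing suggestion to use strongly closed abelian $p$-subgroups is in the right spirit for this second branch only; without the case division and the Flavell step your induction does not close.
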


Recall that for a finite group $G$, the solvable radical of $G$, denoted by $R(G)$, is the largest normal solvable subgroup of $G$.
Theorem \ref{th3} and Corollary \ref{cor:special} now yield the following.

\begin{corol}\label{cor}
Let $G$ be a finite group and let $p$ be a prime. If $H$ is an extremely closed abelian $p$-subgroup of $G$, then $H\subseteq R(G)$.
\end{corol}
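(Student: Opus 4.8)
The plan is to deduce Corollary~\ref{cor} directly from Theorem~\ref{th3} and Corollary~\ref{cor:special}, proceeding by induction on $|G|$. The factorization $G = N_G(H) O_{p'}(G)$ from Theorem~\ref{th3} is the engine: it already pins $H$ inside a subgroup that is ``almost all of $G$ modulo a $p'$-part'', and we want to upgrade this to membership in the solvable radical $R(G)$. First I would dispose of the trivial case $H = 1$, and observe that since $H$ is a $p$-group it is solvable, so if $H$ is normal in $G$ we are immediately done. Hence we may assume $H \ne 1$ and $N_G(H) < G$.

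\medskip

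\noindent \textbf{Reduction to the case $O_{p'}(G)=1$.} Set $N = O_{p'}(G)$. If $N \ne 1$, I would pass to $\bar G = G/N$ with $\bar H = HN/N \cong H$ (the isomorphism because $H \cap N = 1$, as $H$ is a $p$-group and $N$ a $p'$-group). One checks that the extreme-closure hypothesis is inherited: for $\bar g \in \bar G$, lifting to $g \in G$, the image of $\la H, H^g\ra$ in $\bar G$ is $\la \bar H, \bar H^{\bar g}\ra$, and $N_{\bar G}(\bar H) = N_G(H)N/N$ (using $G = N_G(H)N$ from Theorem~\ref{th3}, a Frattini-type argument gives the equality $N_{\bar G}(\bar H)=\overline{N_G(H)}$), so $\la \bar H,\bar H^{\bar g}\ra \cap N_{\bar G}(\bar H) = \overline{\la H,H^g\ra \cap N_G(H)} = \bar H$. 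By induction $\bar H \subseteq R(\bar G)$. Since $N$ is solvable, $R(\bar G) = R(G)/N$, so $HN/N \subseteq R(G)/N$, giving $H \subseteq R(G)$ as desired. Thus we may assume $O_{p'}(G) = 1$, and then Theorem~\ref{th3} forces $N_G(H) = G$, i.e. $H \normeq G$ --- but $H$ is solvable, so $H \subseteq R(G)$ trivially.

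\medskip

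\noindent \textbf{What the odd-prime corollary buys us.} The argument above in fact only uses Theorem~\ref{th3}; the role of Corollary~\ref{cor:special} is to give the sharper conclusion that is being implicitly packaged, namely that when $p$ is odd one can replace the induction with the one-line observation $\la H^G \ra \cap N_G(H) = H$, which combined with $G = N_G(H)O_{p'}(G)$ shows $\la H^G\ra \le H \cdot (\la H^G\ra \cap O_{p'}(G))$; since $\la H^G\ra \normeq G$ and $O_{p'}(G)$ is solvable, $\la H^G\ra$ is solvable, hence $\la H^G\ra \subseteq R(G)$ and a fortiori $H \subseteq R(G)$. For $p=2$ one does not have Corollary~\ref{cor:special}, but the inductive reduction to $O_{p'}(G)=1$ still works verbatim, so the corollary holds for all primes.

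\medskip

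\noindent \textbf{Expected obstacle.} The only genuinely delicate point is the quotient step: verifying that ``extremely closed'' descends to $G/O_{p'}(G)$, in particular that $N_{G/N}(HN/N)$ is exactly the image of $N_G(H)$ rather than something larger. This is where the factorization $G=N_G(H)O_{p'}(G)$ is essential --- without it, normalizers need not behave well under quotients --- so I would be careful to invoke Theorem~\ref{th3} for $G$ itself before taking the quotient, and to check the isomorphism $H\cong HN/N$ and the class-wise translation of $\la H,H^g\ra \cap N_G(H)$ under the projection. Everything else is routine.
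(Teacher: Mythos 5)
Your argument has a genuine gap: at two separate points you assert that $O_{p'}(G)$ is solvable, and this is false for odd $p$. In the inductive reduction you write ``Since $N$ is solvable, $R(\bar G)=R(G)/N$'' with $N=O_{p'}(G)$, and in the alternative one-line argument you conclude that $\la H^G\ra$ is solvable ``since $O_{p'}(G)$ is solvable.'' For $p=2$ this is fine (odd order, Feit--Thompson), but for odd $p$ the subgroup $O_{p'}(G)$ can contain arbitrary nonabelian simple groups of order prime to $p$ (e.g.\ $O_{7'}(\Alt_5\times C_7)=\Alt_5$). Without solvability of $N$, knowing $\bar H\subseteq R(\bar G)$ only places $H$ inside the preimage of $R(G/N)$, which need not be solvable, so you cannot conclude $H\subseteq R(G)$; the induction collapses exactly at the step you flagged as routine rather than at the quotient step you worried about.

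The paper avoids this by extracting solvability from the fixed-point-free action rather than from the $p'$-order. For odd $p$, Corollary \ref{cor:special} gives $\la H^G\ra\cap N_G(H)=H$, so $H$ is a self-normalizing abelian Sylow $p$-subgroup of $L=\la H^G\ra$; Burnside's transfer theorem yields a normal $p$-complement $K$ of $L$ with $C_K(H)=1$, and then $K$ is solvable by Lemma \ref{lem:coprime} (solvability of groups admitting a fixed-point-free coprime action), whence $L$ is a solvable normal subgroup of $G$ containing $H$ --- this is packaged as Lemma \ref{lem:complement}. For $p=2$ the paper argues directly: $G=N_G(H)O_{2'}(G)$ makes $HO_{2'}(G)$ normal in $G$, and it is solvable by Feit--Thompson. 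Your $p=2$ case is essentially correct, but the odd case needs Corollary \ref{cor:special} in an essential way, not merely as a cosmetic shortcut.
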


By an application of Burnside's normal $p$-complement theorem and the solvability of finite groups admitting a fixed point free coprime group action, if $H$ satisfies the hypothesis of the corollary, then $\la H,H^g\ra$ is solvable for all $g\in G.$ Thus if $x\in H$, then $\la x,x^g\ra$ is solvable for all $g\in G.$ By the main results in \cite{GGKP,Guest}, if $p\ge 5$, then $x\in R(G)$ and hence $H\leq R(G)$. Thus the above corollary only provides new result when $p=2$ or $3$.

In general, if $x$ is a $p$-element and $\la x,x^g\ra$ is $p$-solvable for all $g\in G$, then it is not true that $x\in R_p(G)$, where $R_p(G)$ is the $p$-solvable radical of $G$, that is, $R_p(G)$ is the largest normal $p$-solvable subgroup of $G$. For a counterexample, consider $G=\UU_3(3)$ and $x\in G$  a transvection, so $x$ has order $3$ and the conjugacy class of $G$ containing $x$ has size $56$, then we can check that $\la x,x^g\ra$ is isomorphic to either $\la x\ra$ or $\SL_2(3)$ for every $g\in G.$ Clearly $\la x,x^g\ra$ is $3$-nilpotent and hence it is $3$-solvable for every $g\in G.$ There is also a counterexample when $p=2$ since if $x\in G$ is an involution, then $\la x,x^g\ra$ is $2$-nilpotent for every $g\in G.$ Recall that a finite group $G$ is $p$-nilpotent if it has a normal $p$-complement for some prime $p.$ On the other hand, it is proved in \cite{Flavell} that if $P$ is a Sylow $p$-subgroup of a finite group $G$ for some prime $p,$ then $G$ is $p$-solvable if and only if $\la P,g\ra$ is $p$-solvable for all $g\in G$. Generalizing this result, we can prove the following.

\begin{theorem}\label{th5}
Let $G$ be a finite group and let $p$ be a prime. Let $P$ be a Sylow $p$-subgroup of $G$. Then $G$ is $p$-solvable if and only if $\la P,P^g\ra$ is $p$-solvable for all $g\in G.$
\end{theorem}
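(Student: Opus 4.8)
The forward implication is immediate, since every subgroup and every quotient of a $p$-solvable group is $p$-solvable; in particular $\la P,P^g\ra\leq G$ is $p$-solvable for all $g\in G$ whenever $G$ is $p$-solvable. For the converse I would argue by induction on $|G|$, taking $G$ to be a counterexample of least order: thus $\la P,P^g\ra$ is $p$-solvable for all $g\in G$ but $G$ is not $p$-solvable. Two inheritance statements follow at once. If $1\neq N\normeq G$, then $PN/N\in\Syl_p(G/N)$ and $\la PN/N,(PN/N)^{gN}\ra$ is an epimorphic image of $\la P,P^g\ra$, hence $p$-solvable; so $G/N$ inherits the hypothesis and is $p$-solvable by minimality. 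If $H<G$ and $Q\in\Syl_p(H)$, pick $x\in G$ with $Q=P^x\cap H$; then for each $h\in H$ we have $\la Q,Q^h\ra\leq\la P^x,(P^x)^h\ra=\la P,P^{xhx^{-1}}\ra^x$, which is $p$-solvable, so $H$ inherits the hypothesis and is $p$-solvable by minimality. Hence in the minimal counterexample \emph{every proper subgroup of $G$ is $p$-solvable}.

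From the first statement, $G$ has a unique minimal normal subgroup $N$ (two distinct ones $N_1,N_2$ would embed $G$ into the $p$-solvable group $G/N_1\times G/N_2$), and $N$ is neither a $p$-group nor a $p'$-group (either would combine with the $p$-solvability of $G/N$ to make $G$ $p$-solvable). So $N=S_1\times\cdots\times S_k$ with the $S_i$ isomorphic to a fixed non-abelian simple group $S$, and $p\mid|S|$ since otherwise $N$ would be a $p'$-group. If $k\geq2$, or if $k=1$ but $N\neq G$, then $S_1$ is a proper subgroup of $G$ and hence $p$-solvable by the previous paragraph, which is absurd because $S_1$ is non-abelian simple. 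Therefore $k=1$ and $N=G$: the minimal counterexample $G$ is itself a non-abelian simple group with $p\mid|G|$.

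It remains to contradict the existence of such a $G$. As $G$ is not $p$-solvable, the hypothesis forces $\la P,P^g\ra\neq G$ for every $g$, so the contradiction we need is exactly the assertion that $G$ is generated by two conjugate Sylow $p$-subgroups. For $G$ of Lie type in the defining characteristic $p$ this is easy: choosing $P$ to be a maximal unipotent subgroup $U$, its opposite $U^{-}$ is a $G$-conjugate of $U$ and $\la U,U^{-}\ra$ contains all root subgroups, hence equals $G$. For the remaining simple groups I would proceed by a case analysis based on the classification: for $\Alt_n$ one applies Jordan's theorem on primitive permutation groups containing a cycle of prime length (with ad hoc arguments when a Sylow $p$-subgroup has order $p$ and $n$ is close to $p$); the sporadic groups are a finite check; and the simple groups of Lie type in non-defining characteristic are handled using the known structure of their maximal subgroups and Sylow normalizers. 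Here one should note that two conjugates of $P$ lying in a common conjugate of a proper subgroup such as a parabolic or a torus normalizer generate only a solvable subgroup, so such pairs must be avoided; the point of the analysis is that enough other pairs are available.

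The reduction to the simple case is routine, so the main obstacle is that last step: proving, case by case over the classification of finite simple groups, that every non-abelian simple group is generated by two conjugate Sylow $p$-subgroups whenever $p$ divides its order. I expect the cleanest organization is to use the extra information established above --- that $G$ is in fact a minimal non-$p$-solvable group, all of whose proper subgroups are $p$-solvable --- which restricts $G$ to a short list and confines the verification to those groups.
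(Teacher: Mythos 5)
Your reduction to the simple case is correct and is essentially the one in the paper: the forward direction is trivial, and in a minimal counterexample both proper subgroups and proper quotients inherit the hypothesis (your conjugation argument $\la Q,Q^h\ra\leq\la P,P^{xhx^{-1}}\ra^x$ is exactly the one used), so every proper subgroup is $p$-solvable and a proper nontrivial normal subgroup $N$ would make $G$ an extension of the $p$-solvable group $N$ by the $p$-solvable group $G/N$. (Your detour through the unique minimal normal subgroup and its decomposition as $S^k$ is more than is needed, but it is not wrong.) You also correctly identify the remaining obstruction: one must show that a non-abelian simple group all of whose proper subgroups are $p$-solvable is generated by two conjugate Sylow $p$-subgroups.

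The genuine gap is that this last statement --- the paper's Proposition \ref{prop:Sylow-generation} --- is where essentially all of the work lies, and you only sketch a plan for it. The paper does not need Jordan's theorem or a general treatment of all simple groups: it quotes Guralnick's theorem that every non-abelian simple group is generated by two Sylow $2$-subgroups (disposing of $p=2$), Breuer--Guralnick for Lie type in defining characteristic (your opposite-unipotent argument is the same idea), and then uses the explicit classifications of minimal non-$p$-solvable simple groups (Thompson for $p=2$, Guralnick--Tiep for $p=3$ and $p\geq 5$) to confine the verification to a short list, exactly as you anticipate. But the verification itself is a several-page case analysis combining Guest's solvable Baer--Suzuki theorem (to rule out the possibility that $\la x,x^g\ra$ is always solvable for $x$ of order $p$) with the determination of the maximal overgroups of a Sylow $p$-subgroup from Baumeister--Burness--Guralnick--Tong-Viet: in most cases one shows either that $P$ lies in a \emph{unique} maximal subgroup $M$, so that any conjugate $P^g\not\leq M$ gives $G=\la P,P^g\ra$, or that every maximal overgroup of $P$ fails to be $p$-solvable, contradicting the standing hypothesis. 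Your proposal names none of these inputs and does not carry out any case, so as written it establishes the reduction but not the theorem.
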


 Our notation is standard. For finite group theory, we follow \cite{gor} and \cite{KS} and for  finite simple groups, we follow the notation in \cite{KL}.

For the organization of the paper, we collect some useful results in Section \ref{sec2}. We will prove Theorems \ref{th1}-\ref{th3} and the corollaries in Section \ref{sec3} and the last theorem will be proved in Section \ref{sec4}.

\section{Preliminaries}\label{sec2}

Let $G$ be a finite group. Recall that the Fitting subgroup of $G$, denoted by $F(G)$, is the largest nilpotent normal subgroup of $G$. The layer of $G$, denoted by $E(G)$, is the product of all components of $G$, where  a component of $G$ is a subnormal quasi-simple subgroup of $G$. A finite group $L$ is quasi-simple if $L$ is perfect and $L/Z(L)$ is a non-abelian simple group. The generalized Fitting subgroup of $G$, denoted by $F^*(G)$, is defined by $F^*(G)=F(G)E(G).$ As usual, if $H\leq G$, then $N_G(H)$ and $C_G(H)$ denote the normalizer and centralizer of $H$ in $G$, respectively. Finally, a finite group $G$ is almost simple with socle $S$ if there exists a finite non-abelian simple group $S$ such that $S\unlhd G\leq \Aut(S).$

Recall that a subgroup $H$ of $G$ is called {\em pronormal} (resp. {\em abnormal}) in $G$ if for any $g\in G,$ $H^g=H^u$ for some $u\in \langle H,H^g\rangle,$ (resp. $\langle H,H^g\rangle=\langle H,g\rangle$). 
The first lemma is obvious, for completeness, we will include  a proof here.
\begin{lem}\label{proab} Let $G$ be a finite group. Let $H$ be a pronormal subgroup of $G$ and let $N\unlhd G.$ Then the following hold.

$(i)$ If $N\unlhd G$ and $P\in \Syl_p(N)$, then $P$ is pronormal in $G$.

$(ii)$ $N_G(H)$ is abnormal in $G$.

$(iii)$ If $H\leq N\unlhd G$, then $G=N_G(H)N$.

$(iv)$ If $H\leq L\leq G$, then $H$ is pronormal in $L$.

$(v)$ If $H$ is subnormal in $K$, where $K\leq G,$ then $H\unlhd K$.

$(vi)$ If $L=\la H^G\ra$, then $\la H^L\ra=L.$
\end{lem}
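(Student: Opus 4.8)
The plan is to derive all six statements directly from the defining property of pronormality — that for every $g\in G$ one has $H^g=H^u$ for some $u\in\langle H,H^g\rangle$ — with two recurring ideas doing essentially all of the work: the ``restriction'' observation that if $H\leq L\leq G$ then the conjugating element $u$ already lies in $L$, and a Frattini-type step that peels off a factor $N_G(H)$. None of the parts is deep, so the real task is to order them so that the later ones may quote the earlier ones.

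I would begin with $(iv)$, since it is used repeatedly: for $g\in L$ we have $H^g\leq L$, hence $\langle H,H^g\rangle\leq L$, so the element $u$ witnessing pronormality of $H$ in $G$ lies in $L$, which is precisely pronormality of $H$ in $L$. Part $(i)$ is the classical Sylow/Frattini argument: for $g\in G$, both $P$ and $P^g$ are Sylow $p$-subgroups of $N$, hence of $K\cap N$ where $K=\langle P,P^g\rangle$ and $K\cap N\unlhd K$, so Sylow's theorem inside $K\cap N$ furnishes $u\in K\cap N\leq K$ with $P^u=P^g$. For $(iii)$, if $H\leq N\unlhd G$ then $H$ and $H^g$ both lie in $N$, so the witness $u\in\langle H,H^g\rangle\leq N$ satisfies $gu^{-1}\in N_G(H)$, giving $g\in N_G(H)N$ and hence $G=N_G(H)N$. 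Part $(ii)$ is the special case that $N_G(H)$ is abnormal, which (since $\langle M,M^g\rangle\leq\langle M,g\rangle$ always) amounts to showing $g\in\langle M,M^g\rangle$ for $M=N_G(H)$: putting $K=\langle M,M^g\rangle$ we have $H,H^g\leq K$, the witness $u$ lies in $K$, and $gu^{-1}\in N_G(H)=M\leq K$, so $g\in K$.

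Part $(vi)$ is then immediate: writing $L=\langle H^G\rangle$, every conjugate $H^g$ lies in $L$, so the witness $u\in\langle H,H^g\rangle\leq L$ gives $H^g=H^u\in\langle H^L\rangle$, and taking the join over all $g$ yields $L\leq\langle H^L\rangle\leq L$. Part $(v)$ is the one requiring a genuine (if short) induction: by $(iv)$, $H$ is pronormal in $K$, and I would induct on the subnormal defect of $H$ in $K$. Choosing $M\unlhd K$ with $H$ subnormal in $M$ of smaller defect, the inductive hypothesis gives $H\unlhd M$; then for $g\in K$ we have $H^g\leq M^g=M$, so the pronormality witness $u\in\langle H,H^g\rangle\leq M$ satisfies $H^g=H^u=H$, whence $H\unlhd K$.

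The main obstacle lies only in $(v)$, and it is organizational rather than mathematical: one must take a \emph{shortest} subnormal series for $H$ in $K$ and use that its penultimate term $M$ is normal in $K$ while $H$ has strictly smaller defect in $M$, so that the induction and the appeal to $(iv)$ both go through cleanly. Everything else is a one- or two-line consequence of the definition of pronormality.
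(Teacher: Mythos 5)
Your proposal is correct and follows essentially the same route as the paper: every part is unwound directly from the pronormality witness $u\in\langle H,H^g\rangle$, with $(iv)$ proved first so the later parts can restrict to subgroups. The only (cosmetic) difference is that the paper derives $(v)$ and $(vi)$ from the Frattini-type factorization in $(iii)$, whereas you argue both directly from the witness element; both arguments are valid and of the same length.
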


\begin{proof} $(i)$ Let  $g\in G$. Since $P\leq N\unlhd G,$ we have $\la P, P^g\ra \leq N.$ As $P\in \Syl_p(N),$ it follows that  $P\in \Syl_p(\la P,P^g\ra)$ and hence by Sylow's Theorem, $P^g=P^u$ for some $u\in \la P,P^g\ra.$ Thus $P$ is pronormal in $G.$

$(ii)$ Assume that $H$ is pronormal in $G.$ Let $M=N_G(H)$ and  let $g\in G$. By definition,  $H^g=H^u,$ for some $u\in \la H,H^g\ra,$ whence $gu^{-1}\in N_G(H)=M.$ Since $u\in \la H,H^g\ra\leq \la M,M^g\ra,$ we have $g\in \la M,M^g\ra$ and so $\la M,M^g\ra=\la M,g\ra$.  Hence $M$ is abnormal in $G$.
 
$(iii)$ Let $g\in G.$ We have $H^g=H^u,$ where $u\in \la H,H^g\ra\leq N$ as $H\leq N\unlhd G.$ Thus $gu^{-1}\in N_G(H)$ and so $g\in N_G(H)N.$

$(iv)$ This is obvious.

$(v)$ By $(iv),$ it suffices to show that if $H$ is subnormal and pronormal in $G$ then $H\unlhd G.$ In fact, we only need to prove the following: if $H\unlhd K\unlhd G$ and $H$ is pronormal in $G$ then $H\unlhd G.$ By applying $(iii),$ we have $G=N_G(H)K.$ However, as $H\unlhd K,$ $K\leq N_G(H)$ and so $G=N_G(H).$

$(vi)$ Since $H\leq L=\la H^G\ra \unlhd G,$ $G=N_G(H)L$ by $(iii)$ and thus $$L=\la H^G\ra=\la H^{N_G(H)L}\ra=\la H^L\ra\leq L.$$ Therefore $L=\la H^L\ra.$ This completes the proof of the lemma.
\end{proof}

We next deduce some properties of extremely closed subgroups.
\begin{lem}\label{induct} Let $G$ be a finite group and let  $H$ be a $p$-subgroup of $G$ for some prime $p$.  Let $N\unlhd G$ and assume that $H$ is extremely closed in $G$. Let $\overline{G}=G/N.$ Then the following hold.

$(i)$ For every $g\in G,$ we have  $N_{\la H,H^g\ra}(H)=H$ and $H\in\Syl_p(\la H,H^g\ra)$.

$(ii)$ $H$ is pronormal in $G.$

$(iii)$ If $H\leq L$, then $H$ is extremely closed in $L$.

$(iv)$ $N_{\overline{G}}(\overline{H})=\overline{N_G(H)}.$

$(v)$ If $H$ is abelian, then  $\la H,H^g\ra=HO_{p'}(\la H,H^g\ra)$, for every $g\in G$.

$(vi)$ $\overline{H}$ is extremely closed in $\overline{G}$.

$(vii)$ If $H\leq Q\leq G,$ where $Q$ is a $p$-group, then $N_G(Q)\leq N_G(H).$ In particular, if $H\leq P\in \Syl_p(G)$, then $P\leq N_G(H).$

\end{lem}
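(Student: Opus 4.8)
The plan is to prove the seven parts essentially in the stated order, using the earlier parts freely; throughout fix $g\in G$ and set $K=\la H,H^g\ra$, so that the extremely closed hypothesis reads $K\cap N_G(H)=H$, i.e. $N_K(H)=H$. Part $(i)$ is then almost a restatement: $N_K(H)=K\cap N_G(H)=H$, and the Sylow assertion follows from the fact that a proper subgroup of a finite $p$-group is never self-normalizing. Concretely, choose $P\in\Syl_p(K)$ with $H\le P$; if $H<P$ then $H<N_P(H)\le K\cap N_G(H)=N_K(H)=H$, a contradiction, so $H=P\in\Syl_p(K)$. Part $(ii)$ is immediate from $(i)$: since $|H^g|=|H|=|K|_p$ and $H^g\le K$, both $H$ and $H^g$ are Sylow $p$-subgroups of $K$, so Sylow's theorem gives $u\in K=\la H,H^g\ra$ with $H^g=H^u$. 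Part $(iii)$ is the trivial remark that for $\ell\in L\le G$ one has $\la H,H^\ell\ra\cap N_L(H)\le\la H,H^\ell\ra\cap N_G(H)=H$. Part $(vii)$ also reduces to $(i)$: if $g\in N_G(Q)$ then $H,H^g\le Q$, so $K\le Q$ is a $p$-group having $H$ as a Sylow subgroup by $(i)$, forcing $K=H$ and hence $H^g=H$; applying this with $Q=P\in\Syl_p(G)$ gives $P\le N_G(P)\le N_G(H)$.

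Part $(v)$ is a one-line application of Burnside's normal $p$-complement theorem: by $(i)$, $H\in\Syl_p(K)$ and $N_K(H)=H$, and as $H$ is abelian it is central in $N_K(H)$, so $K$ has a normal $p$-complement $O_{p'}(K)$ and $K=H\,O_{p'}(K)$.

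The two parts that need an actual argument are the quotient statements $(iv)$ and $(vi)$, and the common device is that whenever $H^x\le HN$ the pronormality witness from $(ii)$ already lies inside $HN$. For $(iv)$ the inclusion $\overline{N_G(H)}\le N_{\overline G}(\overline H)$ is clear; conversely, if $gN\in N_{\overline G}(\overline H)$ then $H^g\le HN$, hence $\la H,H^g\ra\le HN$, so the element $u\in\la H,H^g\ra$ with $H^g=H^u$ lies in $HN$, and $gu^{-1}\in N_G(H)$ gives $g\in N_G(H)N$. For $(vi)$, note first $\la\overline H,\overline H^{\overline g}\ra=\overline K$ and that the reverse inclusion $\overline H\le\overline K\cap N_{\overline G}(\overline H)$ is trivial; for the forward one, take $\overline x=\overline k$ with $k\in K$ and, using $(iv)$, with $k\in N_G(H)N$, so $H^k\le HN$. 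The witness $u\in\la H,H^k\ra$ with $H^k=H^u$ then lies in $HN$ (since $H,H^k\le HN$) and in $K$ (since $k\in K$), whence $ku^{-1}\in K\cap N_G(H)=H$ and $k\in Hu\subseteq HN$, i.e. $\overline x\in\overline H$.

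I expect $(vi)$ to be the subtle point: it requires combining pronormality $(ii)$, the normalizer-in-quotient identity $(iv)$, and the extremely closed equality $K\cap N_G(H)=H$ at once, and in particular one must be careful that the pronormality witness $u$ can be arranged to lie simultaneously in $HN$ and in $K$. Everything else is bookkeeping with the definitions.
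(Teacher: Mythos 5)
Your proof is correct and follows essentially the same route as the paper's: parts $(i)$--$(v)$ are argued identically, and for $(vi)$ both proofs reduce to showing $KN\cap N_G(H)N\leq HN$ by producing the pronormality witness $u\in\la H,H^{k}\ra\leq HN$ (the paper phrases this via Dedekind's modular law and abnormality of $H$ in $K$, you via the equality $K\cap N_G(H)=H$ directly, but the mechanism is the same). The only real deviation is $(vii)$, where you argue directly that $\la H,H^g\ra$ is a $p$-group with $H$ as a Sylow subgroup, hence equals $H$, instead of invoking the paper's ``pronormal and subnormal implies normal'' lemma --- both are valid.
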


\begin{proof}

 $(i)$ Let $g\in G$ and  let $T=\langle H,H^g\rangle.$ We have $H=T\cap N_G(H)=N_T(H)$ and so  $H$ is a Sylow $p$-subgroup of $T$ by Sylow's theorem.

 $(ii)$ Let $g\in G.$ As above, let $T=\la H,H^g\ra$. From part (i), $H$ is a Sylow $p$-subgroup of $T$ and since $|H^g|=|H|$ and $H^g\leq T,$ $H^g$ is also a Sylow $p$-subgroup of $T$. By Sylow's theorem, $H^g=H^u$ for some $u\in T.$ Thus $H$ is pronormal in $G.$

$(iii)$ Let $g\in L$. Then $\la H,H^g\ra\cap N_L(H)=\la H,H^g\ra\cap N_G(H)\cap L=H\cap L=H.$

$(iv)$ It suffices to show that $N_G(HN)\leq N_G(H)N.$ Let $g\in N_G(HN).$ Then $H^g\leq HN$ and hence $T=\la H,H^g\ra\leq HN.$ Since $H$ is pronormal in $G$ by $(ii),$ we have $H^g=H^u,$ for some $u\in T\leq HN.$ Thus $gu^{-1}\in N_G(H)$ whence $g\in N_G(H)HN=N_G(H)N.$

$(v)$ Assume that $H$ is abelian. Let $g\in G$ and let $T=\la H,H^g\ra.$ By (i), $H$ is a self-normalizing abelian Sylow $p$-subgroup of $T$. The result now follows from Burnside's normal $p$-complement theorem (\cite[Theorem $7.4.3$]{gor}).

$(vi)$ Applying $(iv),$ we need to show that $\la H,H^g\ra N\cap N_G(H)N=HN$, for all $g\in G.$ Let $T=\la H,H^g\ra.$
By Dedekind's Modular law, we have $$T N\cap N_G(H)N=N(TN\cap N_G(H)).$$ Hence, it suffices to show that $TN\cap N_G(H)\leq HN.$
Let $y=xn\in TN\cap N_G(H),$ where $y\in N_G(H),x\in T$ and $n\in N.$ We have $H=H^y=H^{xn},$ it implies that $H^x=H^{n^{-1}}.$ By (i), we have $N_T(H)=H,$ and so by $(ii)$ and Lemma \ref{proab}$(ii),$ $H$ is abnormal in $T.$ Thus $x\in \la H,H^x\ra= \la H,H^{n^{-1}}\ra\leq HN.$ Therefore, $y=xn\in HN.$

$(vii)$ Since $H$ is pronormal and subnormal in $N_G(Q),$ by Lemma \ref{proab}$(v),$ $H$ is normal in $N_G(Q).$ The remaining claim is obvious.
The proof  is now complete.
\end{proof}

\begin{lem}\label{strong} Let $G$ be a finite group, let $N\unlhd G$ and let $H$ be an extremely closed  $p$-subgroup of $G$ for some prime $p$.  Let $P$ be a Sylow $p$-subgroup of $G$ containing $H$ and let $Q=H\cap N$. Then the following hold.

$(i)$ $H$ is strongly closed in $P$ with respect to $G$.

$(ii)$  $Q$ is strongly closed in $P\cap N$ with respect to $N$.

$(iii)$ If  $N\leq N_G(H),$ then $Q\unlhd G.$

\end{lem}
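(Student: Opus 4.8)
All three parts bootstrap from the extreme closedness of $H$ in $G$ via Lemma \ref{induct}. For $(i)$, the starting point is Lemma \ref{induct}$(vii)$, which gives $P\leq N_G(H)$ since $H\leq P\in\Syl_p(G)$; then for any $g\in G$ and any $x\in H^g\cap P$ we have $x\in\langle H,H^g\rangle$ and $x\in P\leq N_G(H)$, so $x\in\langle H,H^g\rangle\cap N_G(H)=H$. Hence $H^g\cap P\leq H$ for every $g\in G$, which is exactly the assertion that $H$ is strongly closed in $P$ with respect to $G$; this step is essentially immediate.

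For $(ii)$, recall that $P\cap N\in\Syl_p(N)$ (because $N\unlhd G$) and that $Q=H\cap N\leq P\cap N$. If $a\in Q$ and $n\in N$ satisfy $a^n\in P\cap N$, then $a\in H$ and $a^n\in P$, so $(i)$ forces $a^n\in H$; together with $a^n\in N$ this gives $a^n\in H\cap N=Q$, which is precisely strong closedness of $Q$ in $P\cap N$ with respect to $N$.

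For $(iii)$, assume $N\leq N_G(H)$. The key idea is to trap $Q$ inside a subgroup that is at once normal in $G$ and contained in $P$. Since $N$ normalizes both $H$ and $N$, it normalizes $Q=H\cap N$, so $Q$ is a normal $p$-subgroup of $N$ and hence $Q\leq O_p(N)$; as the reverse inclusion $H\cap O_p(N)\leq H\cap N=Q$ is trivial, we obtain $Q=H\cap O_p(N)$. Now $O_p(N)$ is characteristic in $N\unlhd G$, hence normal in $G$, and being a normal $p$-subgroup of $N$ it lies in the Sylow $p$-subgroup $P\cap N$ of $N$, so $O_p(N)\leq P$. For every $g\in G$ we then have $Q^g=(H\cap O_p(N))^g=H^g\cap O_p(N)\leq H^g\cap P\leq H$ by $(i)$, while also $Q^g\leq O_p(N)$; hence $Q^g\leq H\cap O_p(N)=Q$, and since $g$ was arbitrary, $Q\unlhd G$. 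I expect this last part to be the main obstacle: a naive attempt to prove $H^g\cap N\leq H$ directly stalls because $H^g\cap N$ need not lie in $P$, and it is the passage through $O_p(N)$, which is both globally normal and contained in $P$, that resolves this.
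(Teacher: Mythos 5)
Your proofs of $(i)$ and $(ii)$ are correct and essentially identical to the paper's: both rest on $P\leq N_G(H)$ from Lemma \ref{induct}$(vii)$, so that $H^g\cap P\leq \la H,H^g\ra\cap N_G(H)=H$, and $(ii)$ follows by intersecting with $N$. Your proof of $(iii)$ is also correct, but it takes a detour the paper does not need. You route the argument through $O_p(N)$: you show $Q\unlhd N$, hence $Q\leq O_p(N)$ and $Q=H\cap O_p(N)$, and then use $O_p(N)\leq P$ together with part $(i)$ to conclude $Q^g\leq H\cap O_p(N)=Q$. The paper argues directly: for any $g\in G$, $Q^g=(H\cap N)^g=H^g\cap N\leq \la H,H^g\ra$, and since $N\leq N_G(H)$ by hypothesis, also $Q^g\leq N_G(H)$, so $Q^g\leq \la H,H^g\ra\cap N_G(H)\cap N=H\cap N=Q$. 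Your stated reason for avoiding the direct approach --- that $H^g\cap N$ need not lie in $P$ --- is a red herring: the relevant intersection in the definition of extreme closedness is with $N_G(H)$, not with $P$, and the hypothesis $N\leq N_G(H)$ hands you exactly that containment. So the detour through $O_p(N)$ is sound but unnecessary; the direct argument is shorter and uses the hypothesis $N\leq N_G(H)$ where it is most natural.
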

\begin{proof} 
 Observe that $R:=P\cap N\in \Syl_p(N)$ and by Lemma \ref{induct}(vii), $P\leq N_G(H)$.
 
$(i)$ For $g\in G$, we have $H^g\cap P\leq \la H,H^g\ra \cap N_G(H)=H.$ So $H$ is strongly closed in $P$ with respect to $G$.

$(ii)$ For $n\in N,$ we have $Q^n\leq H^n$ and $R\leq P\leq N_G(H)$ and so $$Q^n\cap R\leq \la H,H^n\ra\cap N_G(H)= H.$$ Furthermore, as $Q\leq N\unlhd G$ and $R\leq N,$ we have  $Q^n\cap R\leq N.$ Hence we obtain $Q^n\cap R\leq H\cap N=Q.$

$(iii)$ Assume that $N\leq N_G(H).$ For each $g\in G,$ we have $$Q^g=Q^g\cap N\leq \la H,H^g\ra\cap N_G(H)\cap N=H\cap N=Q.$$ Hence $Q\unlhd G$ as wanted. The proof is now complete.
\end{proof}

We next quote some results that we will need for the proofs of the main theorems.
\begin{lem}\label{Guest1} Suppose that $G$ is a finite group with $F(G)=1.$ Let $L$ be a component of $G.$ If $x\in G$ such that $x\not\in N_G(L)$ and $x^2\not\in C_G(L)$ then there exists an element $g\in G$ such that $\la x,x^g\ra$ is not solvable.
\end{lem}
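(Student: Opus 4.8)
The plan is to reduce to a situation where the main theorem of Guest's paper \cite{Guest} on the solvable radical can be applied to the element $x$ acting on (a quotient involving) the component $L$, and to extract the desired pair $x, x^g$ generating a non-solvable subgroup. First I would set $M = \la L^G\ra$, the normal closure of $L$ in $G$; since $F(G)=1$, the layer $E(G) = F^*(G)$ is a central-product of the components, and $M$ is the (normal) product of the $G$-conjugates of $L$, with trivial center, so $M$ is a central product of non-abelian simple groups and $C_G(M)$ also has trivial intersection with $M$. The hypotheses say $x\notin N_G(L)$, so conjugation by $x$ moves $L$ to a different component, and $x^2\notin C_G(L)$, which rules out the degenerate case where $x$ merely swaps $L$ with exactly one other component while centralizing everything relevant. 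The point of these two hypotheses is precisely to guarantee that $x$ does not act on the set of components through the solvable radical of the relevant permutation/automorphism action — equivalently, that $\la M, x\ra / C_{\la M,x\ra}(M)$ is not contained in its own solvable radical, or more sharply that $x$ itself, as an automorphism of $M$, lies outside the solvable radical of $\Aut(M)\cap \la M,x\ra$.

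Next I would invoke Guest's theorem: in a finite group $K$ with $R(K)=1$, an element $y\in K$ lies in no proper subgroup of the form... — more usefully, the contrapositive form states that if $\la y, y^k\ra$ is solvable for all $k\in K$ then $y$ lies in the solvable radical $R(K)$. I would apply this inside the group $K = \la M, x\ra$ (or a suitable quotient of it by $C_K(M)$, which has trivial solvable radical because $M$ is a product of non-abelian simple groups and $R(K)\cap M \le R(M) = 1$, forcing $R(K) \le C_K(M)$, and then one checks $R(K)=1$ after factoring out the centralizer, or works with the image of $x$). The image $\bar x$ of $x$ in $\bar K = K/C_K(M)$ is non-trivial: indeed if $\bar x = 1$ then $x\in C_G(M)\le C_G(L)$, contradicting $x\notin N_G(L)$. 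One then needs $\bar x \notin R(\bar K)$; this is where the hypothesis $x^2\notin C_G(L)$ does its work, ruling out the one case — $\bar x$ an involution swapping $L$ with a single conjugate and centralizing the rest — in which $\bar x$ could generate, together with all its conjugates, only solvable subgroups. Granting $\bar x\notin R(\bar K) = 1$, Guest's theorem produces $\bar g\in \bar K$ with $\la \bar x, \bar x^{\bar g}\ra$ non-solvable; lifting $\bar g$ to $g\in K\le G$ and noting that the image of $\la x, x^g\ra$ in $\bar K$ is $\la \bar x, \bar x^{\bar g}\ra$, we conclude $\la x, x^g\ra$ is non-solvable, as required.

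The main obstacle I anticipate is the careful bookkeeping in the degenerate configurations of the component-permutation action: one must verify rigorously that the two stated hypotheses are exactly enough to place $\bar x$ outside the solvable radical of $\bar K$, and in particular handle the case where the $G$-orbit of $L$ has size $2$ and $x$ induces the transposition (so $\bar x$ could a priori be an involution). Here the condition $x^2\notin C_G(L)$ forces $x^2$ to act non-trivially on $L$, so even after passing to $\bar K$ the element $\bar x$ has order greater than $2$ on that orbit-block or acts non-trivially within a component — either way $\la \bar x\ra$ projects onto a non-solvable section together with a conjugate, or one argues directly via the known structure of $\Aut$ of a product of simple groups (wreath-type) that the subgroup generated by $\bar x$ and its conjugates is all of $\bar K$, which is non-solvable since $M\ne 1$. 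A secondary, more routine point is confirming $R(\bar K)=1$, which follows from $R(K)\le C_K(M)$ together with the definition of $\bar K$ as $K/C_K(M)$, using that $M\le \bar K$ (identifying $M$ with its faithful image) is a product of non-abelian simple groups with $C_{\bar K}(M)=1$.
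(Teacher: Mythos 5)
The paper does not actually prove this statement: it is quoted verbatim as Lemma~1 of Guest's paper \cite{Guest}, where it is established by a direct construction. Your proposal instead tries to deduce it from the main theorem of that same paper (the solvable Baer--Suzuki theorem), and this is where the argument breaks down. The implication you invoke as a black box --- ``if $\la y,y^k\ra$ is solvable for all $k\in K$ then $y\in R(K)$'' --- is false for elements of arbitrary order. Guest's Theorem~A requires $y$ to have prime order $p\geq 5$; for order $3$ there are genuine exceptions (his Theorem~$A^*$), one of which appears in this very paper: in $G=\UU_3(3)$ a transvection $x$ of order $3$ satisfies $\la x,x^g\ra\cong\la x\ra$ or $\SL_2(3)$ for every $g$, all solvable, yet $R(G)=1$. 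For $2$-elements the statement fails even more badly. The lemma you are asked to prove places no restriction on the order of $x$ beyond $x^2\neq 1$ (which follows from $x^2\notin C_G(L)$), so $x$ may have order $3$, $4$, $9$, or any composite order, and in none of these cases is the ``contrapositive of Guest's theorem'' available. The logical dependence also runs the other way: Guest proves this lemma first, precisely in order to reduce his main theorem to the almost simple case, so invoking the main theorem here is circular as well as insufficient.

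Your fallback --- showing that $\la \bar x^{\bar K}\ra$ is all of $\bar K$ and hence non-solvable --- does not close the gap either: non-solvability of the normal closure of $x$ never by itself yields a single conjugate $x^g$ with $\la x,x^g\ra$ non-solvable (again the $\UU_3(3)$ example, or any involution in a non-abelian simple group, where every $\la x,x^g\ra$ is dihedral). What the hypotheses $x\notin N_G(L)$ and $x^2\notin C_G(L)$ actually buy is an explicit construction: since $F(G)=1$ the components are simple and distinct components commute, $x$ moves $L$ to a different component $L^x$, and one chooses $g$ built from suitable elements of $L$ and $L^x$ so that $\la x,x^g\ra$ contains elements whose projections into $L$ generate a non-solvable subgroup modulo $C_G(L)$. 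That hands-on argument, not an appeal to the solvable Baer--Suzuki theorem, is the content of Guest's Lemma~1, and it is the missing idea in your proposal.
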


\begin{proof}
This is Lemma $1$ in \cite{Guest}.
\end{proof}

\begin{lem}\label{Guest2} Let $G$ be a finite almost simple group with socle $L.$ Suppose that $x\in G$ is an element of order $p,$ where $p$ is an odd prime. Then one of the following holds:

$(i)$ $\la x,x^g\ra$ is not solvable for some $g\in G;$

$(ii)$ $p=3$ and $L$ is a finite simple group of Lie type defined over $\FF_3,$ a finite field with $3$ elements, or $L\cong \UU_n(2), n\geq 4.$ Moreover, the Lie rank of $L$ is at least $2$ unless $L\cong \UU_3(3).$
\end{lem}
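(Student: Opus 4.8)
The plan is to argue by contradiction. Suppose that $\la x,x^g\ra$ is solvable for every $g\in G$; I will show that $p=3$ and that $L$ is one of the groups in $(ii)$. Since $G$ is almost simple with socle $L$ we have $C_G(L)=1$, and hence $R(G)=1$: indeed $R(G)\cap L$ is a solvable normal subgroup of the nonabelian simple group $L$, so it is trivial, whence $R(G)$ centralises $L$ and $R(G)\leq C_G(L)=1$; in particular $F(G)=1$. If $p\geq 5$, then by the solvable analogue of the Baer--Suzuki theorem --- the characterisation of the solvable radical by pairs of conjugates, valid for elements of prime order at least $5$ (see \cite{GGKP} and \cite{Guest}) --- the hypothesis forces $x\in R(G)=1$, which is absurd. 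Hence $p=3$, the first assertion of $(ii)$.

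From now on $p=3$, and the argument invokes the classification of finite simple groups, following the casework of \cite{Guest}. One first reduces to the case $x\in L$: if $x$ induces an outer automorphism of order $3$ on $L$, then either $L$ already lies in the list in $(ii)$ (which happens only for a triality automorphism of the simple group of type $D_4$ over $\FF_3$), or the fixed-point subgroup $C_L(x)$ is a nonsolvable group of Lie type over a proper subfield of the defining field of $L$ --- so that field is not $\FF_3$ --- and using $C_L(x)$ one produces a $g$ with $\la x,x^g\ra$ nonsolvable, contrary to the assumption. If $L$ is alternating, then taking $x$ to be a product of disjoint $3$-cycles and $x^g$ a conjugate of $x$ overlapping it in a single point gives a subgroup containing $\Alt_5$, so $L$ is not alternating (the cases $L\in\{\Alt_5,\Alt_6\}$ are checked directly); the sporadic groups are excluded by a finite check.

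Hence $L$ is of Lie type, say in characteristic $r$ over $\FF_{r^a}$. If $r=3$ then every element of order $3$ is unipotent, and a long root element together with an opposite conjugate generates a subgroup isomorphic to $\SL_2(r^a)$ or $\PSL_2(r^a)$, which is nonsolvable for $a\geq 2$; a similar treatment of the other unipotent classes forces $a=1$, so $L$ is defined over $\FF_3$. If $r\neq 3$ then every element of order $3$ is semisimple, and the only way for all of the subgroups $\la x,x^g\ra$ to remain solvable is that $L$ is a unitary group over $\FF_4$ and $x$ is a reflection of order $3$; any two such reflections generate a solvable group, typically lying inside a copy of $\GU_2(2)$, a group of order $18$, whereas in the linear groups over $\FF_4$, the symplectic and orthogonal groups over $\FF_2$, and the unitary groups over fields larger than $\FF_4$ the analogous elements or their conjugates generate nonsolvable subgroups such as $\SL_2(4)\cong\Alt_5$ or $\Alt_6$. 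Since $\UU_3(2)$ is not simple, this leaves $L\cong\UU_n(2)$ with $n\geq 4$. Finally, among the simple groups of Lie type over $\FF_3$ together with the groups $\UU_n(2)$, $n\geq 4$, the only one of Lie rank $1$ is $\UU_3(3)$ --- which is genuinely an exception, since any two transvections of order $3$ of $\UU_3(3)$ lie in a subgroup $\SU_2(3)\cong\SL_2(3)$ or in a $3$-group --- and this yields the rank assertion.

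The hard part is the Lie-type analysis in the last two paragraphs: it relies on precise knowledge of the subgroups of a group of Lie type generated by a pair of conjugate elements of order $3$, in particular the dichotomy for pairs of long root elements in characteristic $3$ and the behaviour of pairs of reflection-type semisimple elements in the other characteristics, together with the treatment of the outer automorphisms of order $3$ and of the low-rank classical and sporadic groups. All of this is carried out in \cite{Guest}, from which the lemma follows.
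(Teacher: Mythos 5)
Your proposal is correct and ultimately rests on the same source as the paper: the paper's entire proof of this lemma is the citation ``This is Theorem $A^*$ in \cite{Guest},'' and your sketch likewise defers the substantive Lie-type casework to \cite{Guest}. The expository outline you add (the reduction of $p\geq 5$ via the solvable Baer--Suzuki theorem and the characteristic-$3$ versus cross-characteristic dichotomy) is a reasonable, if in places slightly imprecise, summary of Guest's argument, but it is not doing independent work here.
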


\begin{proof}
This is Theorem $A^*$ in \cite{Guest}.
\end{proof}

Recall that a triple $(G,M,H)$ with $H\unlhd M\leq G$ is called a $W$-triple if $M\cap M^g\leq H$ for all $g\in G-M.$

\begin{lem}\label{wiefla} Let $G$ be a finite group and $H\unlhd M\leq G.$ Then $(G,M,H)$ is a $W$-triple if and only if $$N_G(D)\leq M\mbox{ for all subgroups $D\leq M$ with $D\not\leq H.$}$$
\end{lem}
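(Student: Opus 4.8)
\emph{Forward implication.} Suppose $(G,M,H)$ is a $W$-triple and let $D\le M$ with $D\not\le H$. If $g\in N_G(D)$, then $D=D^g\le M^g$, so $D\le M\cap M^g$; as $D\not\le H$, the $W$-triple condition forces $g\in M$. Hence $N_G(D)\le M$, which settles this direction at once.

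\emph{Converse.} Assume $N_G(D)\le M$ for every $D\le M$ with $D\not\le H$, and suppose for contradiction that $M\cap M^g\not\le H$ for some $g\in G-M$. A first reduction disposes of the core: if $N:=\mathrm{Core}_G(M)\ne 1$, then $N\le H$ (otherwise the hypothesis applied to $D=N$ yields $G=N_G(N)\le M$, making the $W$-triple condition vacuous), and since $N_{G/N}(D/N)=N_G(D)/N$ whenever $N\le D\le M$, the hypothesis passes to $G/N$; induction on $|G|$ then finishes. So assume $\mathrm{Core}_G(M)=1$, i.e.\ $G$ acts faithfully and transitively on $\Omega=G/M$ with point stabilisers the conjugates of $M$. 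We also record that the hypothesis is conjugation-invariant: $D'\le M^x$ and $D'\not\le H^x$ imply $N_G(D')\le M^x$. Now choose, among all triples $(x,p,P)$ with $x\in G-M$, $p$ a prime, and $P$ a nontrivial $p$-subgroup satisfying $P\le M\cap M^x$ and $P\not\le H$, one with $|M\cap M^x|$ minimal and then $|P|$ maximal; the set of such triples is nonempty since $M\cap M^g\not\le H$. As $P\not\le H$ we get $N_G(P)\le M$. Since $P$ fixes the two points $M,Mx\in\Omega$ and (as I expect the minimality of $|M\cap M^x|$ to force) $\mathrm{Fix}_\Omega(P)=\{M,Mx\}$, while $N_G(P)$ permutes $\mathrm{Fix}_\Omega(P)$ and fixes $M$, it follows that $N_G(P)$ fixes $Mx$ as well, so $N_G(P)\le M\cap M^x$. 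Consequently $P\in\Syl_p(M)$: if $P<Q\in\Syl_p(M)$, then $N_Q(P)>P$ (normalisers of proper subgroups of a $p$-group grow), and $N_Q(P)\le N_G(P)\le M\cap M^x$ is a larger valid $p$-subgroup not contained in $H$, contradicting the maximality of $|P|$. Symmetrically $P\in\Syl_p(M^x)$, hence $P^{x^{-1}}\in\Syl_p(M)$, so $P^{x^{-1}}=P^m$ for some $m\in M$ by Sylow's theorem in $M$; then $x^{-1}m^{-1}\in N_G(P)\le M$, forcing $x\in M$, a contradiction.

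\emph{The main difficulty.} The crux is the claim $N_G(P)\le M\cap M^x$, which I have routed through the minimal choice of $|M\cap M^x|$; making this airtight is the delicate step. One must rule out that $P$ still fixes a third point $\omega$ of $\Omega$, in either guise: with $P\not\le H_\omega$ (where $G_\omega$ is the stabiliser and $H_\omega\unlhd G_\omega$ the corresponding conjugate of $H$), or with $P\le H_\omega$ — the latter being exactly the scenario that $M\cap M^x$ lies inside a conjugate of $H$, which blocks the naive use of the conjugated hypothesis. I expect to handle these by a Frattini argument: at a fixed point $\omega$ with $P\le H_\omega\unlhd G_\omega$ one has $G_\omega=H_\omega N_{G_\omega}(P)$ with $N_{G_\omega}(P)\le N_G(P)\le M$, and chasing the resulting factorisations together with the maximality of $|P|$ among such $p$-subgroups (which, via normaliser growth in $p$-groups, forces $P$ to be a Sylow subgroup of each relevant group) should eliminate the extra fixed point and reduce everything to the clean two-point configuration treated above.
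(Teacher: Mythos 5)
The paper does not actually prove this statement---it is quoted verbatim from Flavell and the ``proof'' is the citation ``This is Lemma 2.3 in \cite{Fla1}''---so there is no in-paper argument to compare yours against; I can only assess your proof on its own terms. Your forward implication is correct and complete. Your reduction to $\mathrm{Core}_G(M)=1$ is also sound (though it is never used where it matters), and the endgame is fine: \emph{granted} that $N_G(P)\leq M\cap M^x$, the normalizer-growth argument does force $P\in\Syl_p(M)\cap\Syl_p(M^x)$, and Sylow conjugacy in $M$ then pushes $x$ into $M$.

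The gap is the one you flag yourself, and it is genuine: nothing in your setup delivers $\mathrm{Fix}_\Omega(P)=\{M,Mx\}$, equivalently $N_G(P)\leq M^x$. Minimality of $|M\cap M^x|$ only tells you that a third fixed point $My$ satisfies $|M\cap M^y|\geq |M\cap M^x|$, which is no contradiction; neither minimality nor maximality of the intersection rules out extra fixed points of $P$. Note where the hypothesis actually bites: the conjugated hypothesis gives $N_G(P)\leq M^x$ for free whenever $P\not\leq H^x$, and in that case your Sylow argument closes immediately (indeed, $P\in\Syl_p(M^x)$ already follows from $N_G(P)\leq M$ alone, since $N_Q(P)\leq M\cap M^x$ for $Q\in\Syl_p(M^x)$). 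So the entire difficulty is concentrated in the single case $P\leq H^x$, exactly as you suspect. There your Frattini sketch gets as far as $M^x=H^x N_{M^x}(P)=H^x(M\cap M^x)$, hence $M=H\,(M\cap M^{x^{-1}})$ and $M\cap M^{x^{-1}}\not\leq H$; this shows the bad configuration propagates to $x^{-1}$ but does not terminate in a contradiction, and the proposal stops short of supplying the additional argument needed. As written, the converse direction is therefore incomplete; I would encourage you to consult Flavell's Lemma 2.3 in \cite{Fla1} for how the case $P\leq H^x$ is actually dispatched before trying to patch the fixed-point claim.
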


\begin{proof}
This is Lemma 2.3 in \cite{Fla1}.
\end{proof}

\begin{lem}\label{Wielandt}\emph{(Wielandt's Theorem).} Let $G$ be a finite group. If $(G,M,H)$ is a $W$-triple, then $G$ contains a normal subgroup $K$ such that $G=MK$ and $M\cap K=H.$ In particular, the triple $(G,M,H)$ is special in $G.$
\end{lem}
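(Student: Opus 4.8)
The case $H=1$ of this statement is precisely Frobenius's theorem on the existence of the Frobenius kernel, and the plan is to carry out Frobenius's character-transfer argument in this greater generality. The degenerate cases are immediate: if $M=G$ one may take $K=H$, and if $H=M$ one may take $K=G$; so assume from now on that $H<M<G$. Then Lemma~\ref{wiefla} applied with $D=M$ gives $N_G(M)=M$, so that $M$ has exactly $|G:M|$ conjugates in $G$. Moreover the $W$-triple condition is symmetric: since $g\in G-M$ forces $g^{-1}\in G-M$, we have $M\cap M^{g^{-1}}\le H$, and conjugating by $g$ yields $M^g\cap M\le H^g$; combined with $M\cap M^g\le H$ this gives $M\cap M^g\le H\cap H^g$ for all $g\in G-M$, and hence $M^x\cap M^y\le H^x\cap H^y$ whenever $M^x\ne M^y$.

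For each $\theta\in\Irr(M/H)$, regarded as a character of $M$ with $H\le\ker\theta$, set $\mu_\theta=\theta-\theta(1)\cdot 1_M$, a generalized character of $M$ of degree $0$ vanishing on $H$. The key point is that the restriction to $M$ of the induced class function $\operatorname{Ind}_M^G\mu_\theta$ is again $\mu_\theta$: by Mackey's formula this restriction equals $\mu_\theta$ plus a sum, over the nontrivial $(M,M)$-double cosets $MgM$, of class functions of $M$ induced from subgroups $M\cap M^g$, and each such summand vanishes because the relevant $g$-conjugate of $\theta$ has $H^g$ in its kernel while $M\cap M^g\le H^g$. Put $\theta^{*}:=\operatorname{Ind}_M^G\mu_\theta+\theta(1)\cdot 1_G$. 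Then $\theta^{*}(1)=\theta(1)$, $\theta^{*}|_M=\theta$, and, using Frobenius reciprocity together with the restriction identity just established,
\[
\la\theta^{*},\theta^{*}\ra_G=\la\mu_\theta+\theta(1)1_M,\,\mu_\theta+\theta(1)1_M\ra_M=\la\theta,\theta\ra_M=1 .
\]
Since $\theta^{*}(1)=\theta(1)>0$, it follows that $\theta^{*}$ is a genuine irreducible character of $G$, and $\theta\mapsto\theta^{*}$ is an injection of $\Irr(M/H)$ into $\Irr(G)$ sending $1_M$ to $1_G$.

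Now set $K=\bigcap_{\theta\in\Irr(M/H)}\ker\theta^{*}$, a normal subgroup of $G$. From $\theta^{*}|_M=\theta$ and $H\le\ker\theta$ we obtain $H\le K$ and $K\cap M=\bigcap_{\theta}\ker\theta=H$, so it remains to prove $G=MK$, equivalently $|K|=|G:M|\,|H|$. Since $\theta^{*}$ is a class function, its value at an element lying in some conjugate $M^x$ is the value of $\theta$ at a $G$-conjugate of it inside $M$, while at an element lying in no conjugate of $M$ it takes the value $\theta(1)=\theta^{*}(1)$, since $\operatorname{Ind}_M^G\mu_\theta$ vanishes there; it follows that $K$ is exactly the union of all $G$-conjugates of $H$ together with the set of elements of $G$ lying in no conjugate of $M$. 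Using that $M$ has $|G:M|$ conjugates and that $M^x\cap M^y\le H^x$ for $M^x\ne M^y$, a short inclusion–exclusion gives $\bigl|\bigcup_x M^x\bigr|-\bigl|\bigcup_x H^x\bigr|=|G:M|\,(|M|-|H|)$, whence $|K|=|G|-|G:M|\,(|M|-|H|)=|G:M|\,|H|$ and $G=MK$. The ``in particular'' statement is then immediate: $H\le K\unlhd G$ forces $\la H^G\ra\le K$, so $\la H^G\ra\cap M\le K\cap M=H\le\la H^G\ra\cap M$.

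The real work is the character-theoretic core: establishing $(\operatorname{Ind}_M^G\mu_\theta)|_M=\mu_\theta$, which is exactly where the $W$-triple hypothesis is used (in its symmetric form $M\cap M^g\le H^g$), and deducing from the norm computation that each $\theta^{*}$ is a genuine irreducible character; everything else is the classical Frobenius-kernel bookkeeping. As with Frobenius's theorem itself, I would not expect a proof that avoids character theory.
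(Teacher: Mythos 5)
Your proof is correct and complete: the degenerate cases, the self-normalization of $M$ via Lemma~\ref{wiefla}, the symmetrized intersection property $M\cap M^g\le H\cap H^g$, the Mackey computation showing $(\operatorname{Ind}_M^G\mu_\theta)|_M=\mu_\theta$, the norm-one argument making each $\theta^*$ irreducible, and the counting argument identifying $K=\bigcap_\theta\ker\theta^*$ all check out. The paper gives no proof of this lemma --- it simply cites Wielandt and Suzuki for the existence of $K$ and Berkovich for the ``in particular'' clause --- and what you have written is precisely the standard character-theoretic (Frobenius-transfer) argument contained in those references, so you have supplied the details the paper omits rather than taken a different route.
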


\begin{proof}
The first claim is in \cite{Wielandt} or \cite[Exercise 1, p. 347]{Suzuki} 
and the second is in \cite[Lemma 9]{Berkovich}.
\end{proof}
An automorphism $\theta$ of a finite group is Frobenius if each nontrivial power of $\theta$ is fixed point free.

\begin{lem}\label{Fisch}  Let $G$ be a finite group and let $D$ be a conjugacy class of $G$ containing elements of order $>2$. Assume that $G=\la D\ra$. Then some element of $D$ induces a Frobenius automorphism on $G'$ if and only if each pair of distinct elements in $D$ generates a Frobenius group.
\end{lem}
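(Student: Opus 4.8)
The statement to prove is Lemma~\ref{Fisch}, B.~Fischer's characterization of Frobenius automorphisms. Let me think about how I would approach this.

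\textbf{Setup and the easy direction.} Write $G' = [G,G]$ and let $D$ be a conjugacy class of elements of order $> 2$ with $\langle D \rangle = G$. The backward implication is the substantive one; let me first dispose of the forward direction. Suppose some $d \in D$ induces a Frobenius automorphism on $G'$ by conjugation. Take two distinct elements $d_1, d_2 \in D$; I want $\langle d_1, d_2 \rangle$ to be a Frobenius group. Since $D$ is a single conjugacy class, $d_2 = d_1^g$ for some $g$, and one reduces to showing that $\langle d, d^g \rangle$ is Frobenius whenever $d^g \ne d$. The plan is to locate the Frobenius kernel inside $G'$: set $K = \langle d, d^g\rangle \cap G'$, which has index dividing $|G/G'|$ and in fact one checks $\langle d, d^g\rangle = K\langle d\rangle$ with $\langle d\rangle$ acting on $K$. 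The key point is that $\langle d \rangle$ acts fixed-point-freely on $K$: any nontrivial power $d^i$ fixes no nontrivial element of $G'$, hence none of $K$, and $\langle d\rangle \cap K = 1$ because $d \notin G'$ (as a Frobenius automorphism $d$ acts nontrivially, so $d \notin C_G(G') \supseteq$ the relevant part; more carefully, $d$ has order a prime dividing $|G'|$-coprime data). Then $K\langle d\rangle$ with fixed-point-free cyclic complement $\langle d\rangle$ is Frobenius, and $\langle d, d^g\rangle$ is Frobenius with complement a conjugate of $\langle d \rangle$.

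\textbf{The hard direction.} Now assume each pair of distinct elements of $D$ generates a Frobenius group; I must produce a single $d \in D$ acting as a Frobenius automorphism on $G'$. Fix any $d \in D$ and let $T = C_{G'}(d)$; the goal is $T = 1$ and more generally $C_{G'}(d^i) = 1$ for $1 \le i < o(d)$. Since $d$ has prime-power... actually $d$ need not have prime order, so the cleanest route is: show $d$ acts fixed-point-freely on $G'$, i.e. $C_{G'}(d) = 1$, because then every nontrivial power, generating the same or a smaller cyclic group with the same lack of fixed points (any power $d^i \ne 1$ satisfies $C_{G'}(d^i) \supseteq C_{G'}(d) = 1$, and we need $C_{G'}(d^i)=1$ too — this requires $\langle d \rangle$ to act Frobeniusly, which follows once we know no nontrivial element of $\langle d\rangle$ centralizes a nontrivial element of $G'$). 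So really I want: no nontrivial power of $d$ centralizes a nontrivial element of $G'$. Suppose for contradiction $1 \ne t \in C_{G'}(d)$ (WLOG by replacing $d$ with the offending power, but the power may leave $D$ — so one must be careful and instead argue directly). The standard Fischer argument proceeds by a counting/transfer argument: consider the set $\Omega$ of $G$-conjugates of $d$, and for $t \in G'$ nontrivial analyze $C_\Omega(t)$. One shows using the Frobenius-pair hypothesis that distinct conjugates of $d$ lying in a common $\langle d'\rangle$-orbit structure are tightly controlled, and a Thompson-transfer / Brauer-type counting argument (or the original Fischer argument via the permutation character) forces $G'$ to be nilpotent and $d$ to act without fixed points. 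I expect the main obstacle to be exactly this: assembling the global fixed-point-free action on $G'$ from the local Frobenius conditions on pairs, which is the nontrivial content of Fischer's theorem.

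\textbf{What I would actually cite.} Since Fischer's result is classical, the honest plan is to \emph{cite} it: this is precisely Satz in B.~Fischer's paper \cite{Fis} (Frobenius automorphisms), and the lemma here is a restatement. So the "proof" I would write is essentially a pointer to \cite{Fis}, perhaps with the easy direction spelled out as above (the Frobenius-group-implies-fixed-point-free-on-the-kernel observation) and the hard direction attributed to Fischer. If a self-contained argument were wanted, I would reduce to $G$ perfect or to $G = G'\langle d\rangle$, apply Thompson's theorem (a group with a fixed-point-free automorphism of prime order is nilpotent) after first reducing $o(d)$ to a prime by passing to a suitable power and checking that power still generates via a minimality argument on $|G|$, and then handle the general cyclic case by the Frobenius-kernel structure theorem. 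But for the purposes of this paper, invoking \cite{Fis} directly is the right move, and I would phrase the proof of Lemma~\ref{Fisch} as: ``This is \cite[Satz~1]{Fis}.''
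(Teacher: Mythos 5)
Your proposal lands in exactly the same place as the paper: the paper's entire proof of Lemma~\ref{Fisch} is the single line ``This is Satz I in \cite{Fis}.'' Your concluding decision to cite \cite[Satz~1]{Fis} directly is therefore the correct and identical move; the additional sketches of the two directions are not needed (and your forward-direction sketch has some hand-waving about $d \notin G'$ and the structure of $\langle d, d^g\rangle \cap G'$ that would need care if written out), but since the citation is the actual proof, this is fine.
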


\begin{proof}
This is Satz I in \cite{Fis}.
\end{proof}

We also need the following results.

\begin{lem}\label{lem:generators}
Let $G$ be a finite group.

\begin{itemize}
\item[$(i)$] Let $\pi$ be a set of odd primes and suppose that the $\pi$-group $P$ acts as a group of automorphisms on the solvable finite $\pi'$-group $G$. Then \[C_{[G,P]}(P)=\la C_{[g,P]}(P):g\in G\ra.\]
\item[$(ii)$] Let $\alpha$ be a coprime automorphism of odd order of  $G$. Then \[C_{[G,\la \alpha\ra]}(\alpha)=\la C_{[g,\la \alpha\ra]}(\alpha):g\in [G,\la \alpha\ra]\ra.\]
\end{itemize}
\end{lem}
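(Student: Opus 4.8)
Part (i) is Theorem~A of Flavell~\cite{Fla5}, so there I would simply cite it; the work is in deducing (ii) from (i) by induction on $|G|$. Write $A=\la\alpha\ra$. Using the standard coprime-action identity $[[G,A],A]=[G,A]$ one sees that replacing $G$ by $[G,A]$ changes neither the left-hand side $C_{[G,A]}(\alpha)$ nor the right-hand side of the asserted equality, so we may assume $G=[G,A]$ and must prove $C_G(\alpha)=X$, where $X:=\la C_{[g,A]}(\alpha):g\in G\ra$. Since $[g,A]\le[G,A]=G$ for every $g$, the inclusion $X\le C_G(\alpha)$ is automatic, and only the reverse one is at issue. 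If $G$ is solvable this is precisely (i), with acting group $A$ and using $[G,A]=G$; so assume $G$ is not solvable.

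Choose a minimal normal subgroup $M$ of the semidirect product $G\la\alpha\ra$ with $M\le G$, so $M\unlhd G$ and $M$ is $\alpha$-invariant. Standard coprime-action facts give $[G/M,A]=G/M$, $C_{G/M}(\alpha)=C_G(\alpha)M/M$, and that the image in $G/M$ of each $C_{[g,A]}(\alpha)$ is $C_{[\overline g,A]}(\overline\alpha)$; hence, when $M\ne G$, the inductive hypothesis applied to $G/M$ gives $C_G(\alpha)M=XM$, and Dedekind's law then yields $C_G(\alpha)=X\,C_M(\alpha)$. Thus it suffices to show $C_M(\alpha)\le X$. Here a useful observation is that if $\alpha$ centralizes an $\alpha$-invariant normal subgroup $M$ of a group with $G=[G,\alpha]$, then every $[g,\alpha]=g^{-1}g^{\alpha}$ lies in $C_G(M)$: indeed $g\mapsto g^{-1}g^{\alpha}$ is constant on right cosets of $C_G(\alpha)$, while it sends $gm$ to $(g^{-1}g^{\alpha})^{m}$, so $(g^{-1}g^{\alpha})^{m}=g^{-1}g^{\alpha}$ for all $m\in M$; consequently $G=\la[g,\alpha]:g\in G\ra\le C_G(M)$ and $M\le Z(G)$. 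So if $[M,\alpha]=1$ then $M$ is central, hence $M\cong C_p$, and we are reduced to placing a single central element of order $p$ inside $X$; if $M$ is nonabelian then $[M,\alpha]\ne1$, and if moreover $[M,\alpha]=M$ with $M<G$ the inductive hypothesis applied to $M$ gives $C_M(\alpha)\le X$ outright, while if $1<[M,\alpha]<M$ a further descent through $[M,A]$ reduces matters to the $\alpha$-centralized simple factors of $M$. The genuinely residual configurations are therefore: $M$ a nontrivial irreducible $G\la\alpha\ra$-module on which $\alpha$ acts nontrivially; a central element of prime order; and $M=G$, i.e.\ $G=L_1\times\dots\times L_r$ with $\la\alpha\ra$ transitive on the simple factors $L_i$.

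In each residual configuration one must realize the relevant $\alpha$-fixed elements — the $\alpha$-fixed points of an irreducible module, a central element of order $p$, or (in the $\alpha$-simple case, which collapses to a single factor $L$ equipped with the ``return'' automorphism $\phi$ obtained by running once around the orbit) all of $C_L(\phi)$ — as products of $\alpha$-fixed elements of subgroups of the form $[g,A]$. I expect the module and central cases to be tractable by one more descent through $[M,A]$ together with bookkeeping with coprime-action identities (and, where needed, an application of (i) to a suitable solvable $\alpha$-invariant overgroup inside $G$). The main obstacle is the $\alpha$-simple case: it amounts to showing that for a finite simple group $L$ with a nontrivial coprime automorphism $\phi$, the $\la\phi\ra$-orbits of the commutators $[\ell,\phi]$ generate enough of $L$ to recover $C_L(\phi)$, and I would establish this using the classification of finite simple groups together with the known results on generating simple groups by a bounded number of conjugates of a single element.
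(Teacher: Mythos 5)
For part (i) you do exactly what the paper does: cite \cite[Theorem A]{Fla5}. For part (ii), however, the paper's entire proof is again a citation --- the statement is \cite[Theorem 2]{FR} (Flavell--Robinson), quoted verbatim --- whereas you set out to rederive it by induction from (i), and that derivation is not complete. Your reductions up to the point ``it suffices to show $C_M(\alpha)\le X$'' are fine (the coprime-action bookkeeping, the Dedekind step, and the observation that $[M,\alpha]=1$ forces $M\le Z(G)$ when $G=[G,A]$ all check out), but everything after that is a list of unresolved cases rather than a proof. Concretely: (a) in the central case you must place an $\alpha$-fixed central element of prime order inside $\la C_{[g,A]}(\alpha):g\in G\ra$, and you give no argument --- this is not a routine descent, it is a $Z^*$-type statement and is precisely the kind of content \cite{FR} was written to supply; (b) in the case $1<[M,\alpha]<M$ you assert that ``a further descent'' reduces to the $\alpha$-centralized simple factors without justifying that $[M,\alpha]$ is amenable to the inductive setup (it need not be normal in $G$); (c) the $\alpha$-transitive product of simple groups, which you yourself identify as ``the main obstacle,'' is deferred entirely to ``CFSG plus known generation results'' with no actual argument. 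Since the missing cases are exactly where the substance of the theorem lives, the proposal as written has a genuine gap; the efficient fix is to do what the paper does and cite \cite[Theorem 2]{FR}, which is stated in precisely the form needed.
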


\begin{proof}
The first claim is \cite[Theorem A]{Fla5} and the second is \cite[Theorem 2]{FR}.
\end{proof}
We will use the next result repeatedly.
\begin{lem}\label{lem:coprime}
Let $\pi$ be a non-empty set of primes. Let  $Q$ be a finite $\pi$-group which acts fixed point freely on a finite $\pi'$-group $R$, that is, $C_R(Q)=1$, then $R$ is solvable.
\end{lem}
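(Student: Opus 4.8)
The plan is to induct on $|R|$: I assume the lemma for all groups of order smaller than $R$ and suppose, for a contradiction, that $R$ is not solvable. The first step is to show that $R$ has no proper nontrivial $Q$-invariant normal subgroup $N$. Indeed, since the action is coprime we have $C_{R/N}(Q)=C_R(Q)N/N=1$ and $C_N(Q)\leq C_R(Q)=1$, so by the inductive hypothesis both $N$ and $R/N$ are solvable, whence $R$ is solvable, a contradiction. Consequently $R$ is a minimal normal subgroup of the semidirect product $R\rtimes Q$, so $R=S_1\times\cdots\times S_n$ is a direct product of pairwise isomorphic simple groups $S_i$ which are permuted by the action of $Q$, and this permutation action is transitive by minimality; since $R$ is not solvable, the $S_i$ are non-abelian.

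The second step is to reduce to the case $n=1$. Let $N$ be the stabilizer in $Q$ of the factor $S_1$, acting on $S_1$ by conjugation, and fix a transversal $1=q_1,q_2,\dots,q_n$ of $N$ in $Q$ with $S_1^{q_i}=S_i$. A straightforward check shows that $s\mapsto s\,s^{q_2}\cdots s^{q_n}$ is an isomorphism of $C_{S_1}(N)$ onto $C_R(Q)$; hence $C_{S_1}(N)=1$. Since $|N|$ divides $|Q|$, the $\pi$-group $N$ acts coprimely on the $\pi'$-group $S_1$ with trivial fixed-point subgroup, so if $n>1$ then, $S_1$ being a proper subgroup of $R$, the inductive hypothesis forces $S_1$ to be solvable, contradicting that $S_1$ is non-abelian simple. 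Therefore $n=1$ and $R=S$ is a non-abelian simple group on which $Q$ acts coprimely with $C_S(Q)=1$.

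To finish, I would rule this last configuration out. By the Feit--Thompson theorem $|S|$ is even, so $|Q|$ is odd; and if some element $a\in Q$ of prime order satisfied $C_S(a)=1$, then $S$ would be nilpotent by Thompson's theorem on fixed-point-free automorphisms of prime order, which is absurd, so in fact $C_S(a)\neq 1$ for every such $a$. The required contradiction comes from the fact that a non-abelian finite simple group admits no nontrivial group of automorphisms of coprime order with trivial fixed-point subgroup, which I would invoke as a consequence of the classification of finite simple groups.

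I expect this last step to be the only genuine obstacle: the two reductions above use nothing beyond the standard coprime-action facts (already employed elsewhere in the paper) and the description of minimal normal subgroups, whereas the non-abelian simple case really rests on the classification — equivalently, on the case analysis behind the statement that the relevant outer automorphisms of simple groups always possess nontrivial common fixed points. Since the paper already relies on the classification, this is acceptable; if a classification-free argument is preferred, one should cite the corresponding result in \cite{KS}.
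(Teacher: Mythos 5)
Your reductions are all sound, but you should know that the paper does not prove this lemma at all: it simply cites Theorem~2.3 of Guralnick--Malle--Navarro \cite{GMN}, and what you have written is essentially a reconstruction of the standard proof of that cited result. Your first two steps (killing proper nontrivial $Q$-invariant normal subgroups via the coprime-action equality $C_{R/N}(Q)=C_R(Q)N/N$, which implicitly uses Feit--Thompson to get the solvability needed for that equality, and then the diagonal-fixed-point isomorphism $C_{S_1}(N)\cong C_R(Q)$ to collapse to a single simple factor) are correct and elementary. The entire content of the lemma is then concentrated in your final assertion that a nontrivial group of automorphisms of a non-abelian simple group $S$ of order coprime to $|S|$ must have nontrivial fixed points; this is true, but it is not a throwaway remark --- it is exactly the CFSG-dependent core of \cite{GMN} (a coprime subgroup of $\Aut(S)$ meets $\mathrm{Inn}(S)$ trivially, hence embeds in $\Out(S)$, and the analysis reduces to field automorphisms of groups of Lie type, whose centralizers contain subfield subgroups and are in particular nontrivial). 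So your proof is acceptable provided that last fact is given a genuine reference rather than asserted; as it stands, the honest summary is that you have correctly reduced the lemma to the theorem the paper cites. Your detour through Feit--Thompson and Thompson's fixed-point-free theorem in the last paragraph is harmless but not needed once the CFSG fact is invoked.
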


\begin{proof}
For a proof, see Theorem 2.3 in \cite{GMN}.
\end{proof}

We also need the following consequence of  Burnside's normal $p$-complement theorem.
\begin{lem}\label{lem:complement}
Let $G$ be a finite group and let $p$ be a prime. Let $H$ be an abelian $p$-subgroup of $G$. Assume that $\la H^G\ra\cap N_G(H)=H$. Then  $G=N_G(H)O_{p '}(G)$ and $\la H^G\ra$ is solvable.  Moreover,  $N_G(H)$ has a normal complement in $G$, which is $O_{p'}(\la H^G\ra)$.
\end{lem}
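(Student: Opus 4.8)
The plan is to derive everything from the hypothesis $\la H^G\ra\cap N_G(H)=H$ together with the fact that $H$ is an abelian $p$-subgroup. First I would set $L=\la H^G\ra$, so $L\unlhd G$ and $H\leq L$, and observe that $L\cap N_G(H)=H$ forces $N_L(H)=H$; since $H$ is a $p$-subgroup of $L$ and is self-normalizing in $L$, a standard argument (Frattini-type, using that a Sylow $p$-subgroup of $L$ containing $H$ has $H$ as a self-normalizing subgroup, hence $H$ itself is Sylow in $L$) shows $H\in\Syl_p(L)$. Now $H$ is an abelian Sylow $p$-subgroup of $L$ with $N_L(H)=C_L(H)=H$, so Burnside's normal $p$-complement theorem (\cite[Theorem 7.4.3]{gor}) gives a normal $p$-complement $K=O_{p'}(L)$ in $L$, so $L=HK$ with $H\cap K=1$.

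Next I would upgrade $K\unlhd L$ to $K\unlhd G$: since $K=O_{p'}(L)$ is characteristic in $L$ and $L\unlhd G$, we get $K\unlhd G$, and $K\leq O_{p'}(G)$. Then, because $L=\la H^G\ra\unlhd G$ with $H\leq L$, Lemma \ref{proab}$(iii)$ (or the corresponding part of Lemma \ref{induct}, noting $H$ is pronormal) yields $G=N_G(H)L=N_G(H)HK=N_G(H)K$, and hence $G=N_G(H)O_{p'}(G)$. For the solvability of $L=\la H^G\ra$: we have $L=HK$ with $H$ abelian and $K=O_{p'}(L)$; moreover $H\cap K=1$ and $C_K(H)\leq C_L(H)\cap K = H\cap K=1$ (using $N_L(H)=C_L(H)=H$, since $H$ abelian gives $H\leq C_L(H)\leq N_L(H)=H$), so $H$ acts fixed-point-freely on the $p'$-group $K$; by Lemma \ref{lem:coprime}, $K$ is solvable, and therefore $L=HK$ is solvable.

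Finally, for the ``moreover'' statement: I claim $K=O_{p'}(L)$ is a normal complement to $N_G(H)$ in $G$. We already have $K\unlhd G$ and $G=N_G(H)K$, so it remains to check $N_G(H)\cap K=1$. Since $K$ is a $p'$-group and $N_G(H)\cap K\leq N_G(H)\cap L = H$ is a $p$-group (being contained in $H$), we get $N_G(H)\cap K\leq H\cap K=1$. Hence $N_G(H)$ has normal complement $K=O_{p'}(L)$ in $G$.

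The step I expect to require the most care is pinning down that $H\in\Syl_p(L)$ from $N_L(H)=H$ — one must argue that if $H$ were properly contained in some $p$-subgroup of $L$, then $H$ would lie properly inside its normalizer in the next term of a chain up to a Sylow, contradicting $N_L(H)=H$; this is the only place where a small additional argument (rather than a direct citation) is needed, and it should be noted explicitly. Everything else is a routine assembly of Burnside's theorem, characteristic-subgroup transitivity, pronormality (Lemma \ref{proab}/\ref{induct}), and the fixed-point-free coprime action result (Lemma \ref{lem:coprime}).
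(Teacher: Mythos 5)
Your proposal is correct and follows essentially the same route as the paper: identify $H$ as a self-normalizing abelian Sylow $p$-subgroup of $L=\la H^G\ra$, apply Burnside's normal $p$-complement theorem, use a Frattini/pronormality argument for $G=N_G(H)L=N_G(H)O_{p'}(G)$, get solvability of $L$ from the fixed-point-free action of $H$ on $O_{p'}(L)$ via Lemma \ref{lem:coprime}, and verify $N_G(H)\cap O_{p'}(L)=N_G(H)\cap L\cap O_{p'}(L)=H\cap O_{p'}(L)=1$. The only cosmetic difference is that you spell out the ``normalizers grow'' step showing $H\in\Syl_p(L)$, which the paper leaves implicit.
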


\begin{proof}
Let $H$ be an abelian $p$-subgroup of $G$. Assume that $\la H^G\ra\cap N_G(H)=H$. Let $L=\la H^G\ra\unlhd G.$ Then $N_L(H)=L\cap N_G(H)=H$, so $H$ is a self-normalizing abelian Sylow $p$-subgroup of $L$. In particular, $H\leq Z(N_L(H))$ and hence by Burnside's normal $p$-complement theorem (\cite[Theorem 7.4.3]{gor}) $L=HO_{p'}(L)$. By Frattini's argument $$G=N_G(H)L=N_G(H)O_{p'}(L)=N_G(H)O_{p'}(G).$$ The last equality holds since $O_{p'}(L)\leq O_{p'}(G)$.  

Let $L=\la H^G\ra$. Then $N_L(H)=H$, that is, $H$ is a self-normalizing cyclic Sylow $p$-subgroup of $L$.  By Burnside's normal $p$-complement theorem,  $L$ has a normal $p$-complement $K$ , and hence $C_K(H)=1$. By Lemma  \ref{lem:coprime}, $K$ is solvable and thus $L=HK$ is solvable as well.

We now show that  $O_{p'}(L)$ is a normal complement to $N_G(H)$ in $G$. To see this, observe that $L=HO_{p'}(L)$ and  $H\cap O_{p'}(L)=1.$ Note that $O_{p'}(L)\unlhd G$.  
Thus it suffices to show that $N_G(H)\cap O_{p'}(L)=1.$ Indeed, we have \[N_G(H)\cap O_{p'}(L)=N_G(H)\cap L\cap O_{p'}(L)=H\cap O_{p'}(L)=1.\]
The proof is now complete.
\end{proof}

Finally, we also need the following solvability result.
\begin{lem}\label{lem:solvable}
Let $G$ be a finite group and let  $H$ be an abelian $p$-subgroup of $G$ for some prime $p$. If $H$ is extremely closed in $G$, then $\la H,H^g\ra$ is solvable for all $g\in G.$
\end{lem}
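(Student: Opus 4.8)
The plan is to fix $g\in G$, put $T=\la H,H^g\ra$, and show that $T$ is solvable; since $g$ is arbitrary, this gives the lemma. The whole point is that extreme closure forces $H$ to be self-normalizing in $T$.

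Concretely, I would first invoke Lemma~\ref{induct}(i): it tells us that $N_T(H)=H$ and that $H\in\Syl_p(T)$. Since $H$ is abelian, $H=N_T(H)$ is abelian, so $H\leq Z(N_T(H))$, and Burnside's normal $p$-complement theorem (\cite[Theorem~7.4.3]{gor}) applies to the Sylow $p$-subgroup $H$ of $T$. This yields a normal $p$-complement $K\unlhd T$; that is, $T=HK$ with $K$ a $p'$-group and $H\cap K=1$. (Equivalently, one may simply quote Lemma~\ref{induct}(v) to get $T=HO_{p'}(T)$ and set $K=O_{p'}(T)$.)

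Next I would observe that $H$ acts fixed-point-freely on $K$: indeed $C_K(H)\leq N_K(H)=K\cap N_T(H)=K\cap H=1$. Thus $H$ is a $p$-group acting without nontrivial fixed points on the $p'$-group $K$, so Lemma~\ref{lem:coprime} gives that $K$ is solvable. Since $T/K\cong H$ is a $p$-group, hence solvable, it follows that $T=HK$ is solvable, which is what we want.

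I do not expect a genuine obstacle here beyond unwinding the definitions correctly: the only two external inputs are Burnside's normal $p$-complement theorem (which is the sole place the hypothesis ``$H$ abelian'' is used, to guarantee $H\leq Z(N_T(H))$) and the solvability of a finite group admitting a fixed-point-free coprime automorphism group (Lemma~\ref{lem:coprime}), both already recorded in the excerpt. In fact this is exactly the mechanism that, applied to $G$ itself when $\la H^G\ra\cap N_G(H)=H$, underlies Lemma~\ref{lem:complement}.
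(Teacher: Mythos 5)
Your proposal is correct and follows essentially the same route as the paper: the paper quotes Lemma~\ref{induct}(v) to get $T=HO_{p'}(T)$ (which is exactly your Burnside step), then uses $N_T(H)=H$ to see that $H$ acts fixed-point-freely and coprimely on $O_{p'}(T)$ and concludes via Lemma~\ref{lem:coprime}. Your write-up just makes the intermediate steps (the computation $C_K(H)\leq K\cap H=1$ and the extension argument for solvability of $T$) explicit.
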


\begin{proof}

Let $g\in G$ and  let $T=\la H,H^g\ra.$ By Lemma \ref{induct}$(v),$ we have $T=HO_{p'}(T).$ Since $N_T(H)=H,$ $H$ acts fixed point freely and coprimely on $O_{p'}(T),$ the claim now follows from Lemma \ref{lem:coprime}.
\end{proof}
\section{Extremely closed abelian $p$-subgroups}\label{sec3}

We are now ready to prove the main theorems of the paper. We first prove Theorem \ref{th1}.

\smallskip

\begin{proof}[{\bf Proof of Theorem \ref{th1}.}] 
Assume that $M$ is a maximal subgroup of $G$, $H$ is a maximal subgroup of $M$  and that $\la H,H^g\ra\cap M=H$ for all $g\in G$. We will show that $\la H^G\ra\cap M=H.$ The hypothesis implies that $H\unlhd M$ and $|M:H|$ is a prime.

Clearly, if $H\unlhd G$, then the conclusion holds. So,  assume that $M=N_G(H).$ 
Suppose that there exists a subgroup $D\leq M$ with $D\nleq H$ and $N_G(D)\nleq M$. Take $g\in N_G(D)- M$. Then $M=DH$
 and $M^g=DH^g$. As $M\neq M^g$, $G=D\la H,H^g\ra.$
 We have $$\la H^G\ra=\la H^{D\la H,H^g\ra}\ra\leq \la H,H^g\ra\leq \la H^G\ra$$ and so $\la H^G\ra=\la H,H^g\ra.$ Thus $\la H^G\ra\cap M=\la H,H^g\ra\cap M=H$ and we are done. 

Therefore, we can assume that whenever $D\leq M$ with $D\not\leq H$ then $N_G(D)\leq M.$ By Lemma \ref{wiefla}, $(G,M,H)$ is a $W$-triple and so the result follows from Lemma \ref{Wielandt}.
\end{proof}

\begin{proof}[\bf{Proof of Corollary \ref{cor:generation}}]
Let $G$ be a finite non-abelian simple group. Let $M$ be a maximal subgroup of $G$ and let $H$ be a normal subgroup of $M$ such that $|M:H|=p$ is a prime. Suppose by contradiction that $G\neq \la H,H^g\ra$ for all $g\in G.$  Since $G$ is non-abelian simple, either $\la H^G\ra=G$  or $H=1$. If $H=1$, then $M$ is a self-normalizing cyclic subgroup of $G$ of prime order $p$ and Burnside's normal $p$-complement theorem implies that $G$ has a normal $p$-complement, a contradiction.  

So $H\neq 1$ and $\la H^G\ra=G$. In particular, $M=N_G(H)$ and $\la H^G\ra\cap M=M>H$. Since $H$ is maximal in $M$ and $M$ is maximal in $G$, Theorem \ref{th1} implies that there exists $g\in G$ such that $\la H,H^g\ra\cap M>H$.
 The maximality of $H$ in $M$ implies that $M\leq \la H,H^g\ra$.  Hence $M=\la H,H^g\ra$ and $N_G(H^g)=M^g\neq M$ as $g\not\in M.$ 
Let $a\in M^g-M$, then $H^aH^g=M^a$ is a subgroup and $H^g\la H,H^a\ra=G$. Now $G=\la H^G\ra=H^{\la H, H^a\ra}\leq \la H,H^a\ra$, contradicting our assumption. Thus $G=\la H,H^g\ra$ for some $g\in G$ and the proof is now complete.
\end{proof}
\smallskip

\begin{proof}[{\bf Proof of Theorem \ref{th2}.}] 

Suppose we have proven that $\la H^G\ra\cap N_G(H)=H$.
By Lemma \ref{lem:complement}, we have $G=N_G(H)O_{p'}(G)$ and $\la H^G\ra$ is solvable. 

It remains to show that if $H$ is extremely closed in $G$, then $\la H^G\ra\cap N_G(H)=H$.
Let $G$ be a counterexample to the claim with  minimal order.   Then $H$ is a cyclic group of odd prime order $p$, $\la H,H^g\ra\cap N_G(H)=H$ for all $g\in G$ but $\la H^G\ra\cap N_G(H)\neq H.$ Furthermore, since $|H|=p$ is prime, $H^g=H$ or $\la H,H^g\ra$ is a Frobenius group for all $g\in G$.

\smallskip
(1) We first claim that $\la H^G\ra=G.$  Suppose by contradiction that $L:=\la H^G\ra<G.$ By Lemma \ref{induct}$(ii),$ $H$ is pronormal in $G$ and hence $L=\la H^L\ra$ by Lemma \ref{proab}$(vi).$  By Lemma \ref{induct}$(iii),$ $H$ is extremely closed in $L$ and so by the minimality of $G,$  $\la H^L\ra \cap N_L(H)=H.$ However, as $L=\la H^L\ra,$ we have $N_L(H)=H,$ and thus $$\la H^G\ra \cap N_G(H)=L\cap N_G(H)=N_L(H)=H.$$  This contradiction proves the claim.

\smallskip
(2) Assume  that $N_G(H)=C_G(H)$. Write $H=\la x\ra$. Then  $\la x,x^g\ra$ is a Frobenius group for all $g\in G.$ Hence $x$ acts as a Frobenius automorphism on $G'$ by Lemma \ref{Fisch}  and so $G'H$ is a Frobenius group (as $H$ has prime order).  In particular, $G'\leq O_{p'}(G)$ and $G=N_G(H)O_{p'}(G)$ in this case. In addition, $\la H^G\ra=G'H$ and $\la H^G\ra\cap N_G(H)=H,$ a contradiction.
 
 \smallskip
 (3) Assume that $N_G(H)>C_G(H)$. Let $M$ be a maximal normal subgroup of $G$. Then $G/M$ is a simple group. 
 Since $G=\la H^G\ra$, we have $H\nleq M.$  Assume that $M>1$. Then $|G/M|<|G|$. Lemma \ref{induct}(vi) implies that $HM/M$ is extremely closed in $G/M$. Hence, as $G/M$ is simple, $G=HM$. Now $N_G(H)=N_M(H)H=C_G(H)$, which is a contradiction. Hence $M=1$ and $G$ is a non-abelian simple group.

  \smallskip
 (4) By Lemma \ref{lem:solvable}, $\la x,x^g\ra$ is solvable for all $g\in G$ and so  by Lemma \ref{Guest2}, $p=3$ and $G$ is a finite simple group of Lie type defined over $\FF_3$ or $G\cong \UU_n(2)$ with $n\ge 4.$ Furthermore, except for $\UU_3(3)$, the Lie rank of $G$ is strictly greater than $1$. Now it is easy to see that a Sylow $3$-subgroup of $G$ is non-abelian.  Let $P$ be a Sylow $3$-subgroup of $G$ containing $x$. Then $H=\la x\ra$ is isolated in $P$ with respect to $G$, that is, $H$ does not conjugate in $G$ to any subgroup in $P-H$. By \cite[Theorem 4.250]{Gorenstein}, we deduce that $G\cong\UU_3(3)$.  By \cite[Theorem A*]{Guest}, $x$ is a transvection.  Now for any conjugate $x^g$ of $x$ different from $x$ and $x^{-1}$, we have that $T:=\la x,x^g\ra\cong\SL_2(3)$. However, $\SL_2(3)$ is not a Frobenius group. 
 The proof is now complete.
  \end{proof}
 
 \bigskip

We will need the following result which is a consequence of Theorem $A^*$ in \cite{Guest} and Theorem 1.2 in \cite{FF}.
\begin{prop}\label{prop:Guest}
Let $G$ be a finite group and let $p$ be an odd prime. Let $H$ be a nontrivial abelian $p$-subgroup of $G$. Assume that $G$ has a unique minimal normal subgroup $N$ which is nonabelian such that $G=HN$. Then $H$ is not extremely closed in $ G.$
\end{prop}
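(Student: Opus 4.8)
\textbf{Proof proposal for Proposition \ref{prop:Guest}.}

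The plan is to argue by contradiction: suppose $H$ is extremely closed in $G$, with $G$ having a unique minimal normal subgroup $N$, nonabelian, and $G = HN$. First I would reduce to the almost simple situation. Since $N$ is the unique minimal normal subgroup and is nonabelian, we have $C_G(N) = 1$ (otherwise $C_G(N)$ would contain a minimal normal subgroup distinct from $N$, or else $N \leq C_G(N) = Z(N)$, absurd). Write $N = L_1 \times \cdots \times L_k$ with the $L_i$ the simple direct factors, transitively permuted by $H$ via conjugation (since $G = HN$ and $N$ normalizes each $L_i$). If $k > 1$, pick $x \in H$ of order $p$ which acts nontrivially on the set $\{L_1,\dots,L_k\}$; then for a suitable $g \in N$ the subgroup $\la x, x^g\ra$ will involve a nontrivial product action and one checks it is not $p$-solvable, contradicting Lemma \ref{lem:solvable} (which gives solvability of $\la H,H^g\ra$). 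More precisely, one uses that if $x$ cycles a block of $\ell > 1$ factors, then $\la x, x^g\ra$ projects onto a wreath-type subgroup of $L_1 \wr C_\ell$ that contains $L_1$, hence is nonsolvable. So $k = 1$ and $N = L$ is a nonabelian simple group; since $C_G(L) = 1$ and $G = HL$, $G$ is almost simple with socle $L$.

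Next I would extract an element of order $p$ acting appropriately. Since $H \neq 1$ is a $p$-group and $H \cap L = H \cap N_G(L)^{?}$ — actually $H$ acts on $L$ by conjugation with $H \cap C_G(L) = H \cap 1 = 1$, so $H$ embeds in $\Out(L)$ only if $H \cap L = 1$; in general $G = HL$ gives $H/(H\cap L) \cong G/L \leq \Out(L)$. Pick $x \in H$ of order $p$. By Lemma \ref{lem:solvable}, $\la x, x^g\ra$ is solvable for all $g \in G$. Now apply Lemma \ref{Guest2} to the almost simple group $G$ with socle $L$: case $(i)$ is excluded by solvability of $\la x,x^g\ra$, so we are in case $(ii)$, i.e. $p = 3$ and $L$ is of Lie type over $\FF_3$ or $L \cong \UU_n(2)$, $n \geq 4$, with Lie rank $\geq 2$ unless $L \cong \UU_3(3)$. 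In all these cases a Sylow $3$-subgroup $P$ of $G$ is nonabelian (for the rank-$\geq 2$ groups this is standard; $\UU_3(3)$ also has nonabelian Sylow $3$-subgroup). By Lemma \ref{strong}$(i)$, $H$ is strongly closed in $P$ with respect to $G$; and since $H \neq 1$, $H$ contains a strongly closed (in particular weakly closed) subgroup of order $3$.

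Then I would invoke the classification of finite groups with a strongly closed abelian subgroup. Here is where I would be most careful: I want to conclude that an abelian $3$-subgroup strongly closed in a nonabelian Sylow $3$-subgroup $P$ of one of these specific groups simply cannot exist, or forces a contradiction with $H$ being abelian / with the $\gamma$-type generation property. The route is Theorem 1.2 of \cite{FF} (Flores–Foote): a finite group with a nontrivial strongly closed abelian $p$-subgroup has a very restricted structure, and for $p$ odd the simple groups arising do not include the Lie-type-over-$\FF_3$ or $\UU_n(2)$ families of rank $\geq 2$ (nor $\UU_3(3)$). Concretely: if $A \leq H$ is a strongly closed abelian $3$-subgroup of $G$ with $A \neq 1$, then $\la A^G\ra$ is solvable-by-(something on the Flores–Foote list); but $\la A^G\ra \supseteq$ a nontrivial normal subgroup of the almost simple $G$, hence $\la A^G\ra \geq L$, forcing $L$ itself to appear on the Flores–Foote list for $p=3$, which it does not. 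This contradiction finishes the proof. The one genuine obstacle is making the $\UU_3(3)$ case airtight, since it sits at the boundary (Lie rank $1$) and the earlier Theorem \ref{th2} argument already had to handle it by hand via transvections generating $\SL_2(3)$ (not Frobenius); here the analogous point is that in $\UU_3(3)$ no nontrivial abelian subgroup of a Sylow $3$-subgroup is strongly closed — the class of transvections fuses inside $P$ — which again contradicts Lemma \ref{strong}$(i)$. I would treat $\UU_3(3)$ separately with this fusion argument and dispatch all other cases uniformly via \cite{FF}.
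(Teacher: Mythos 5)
Your skeleton matches the paper's up to the final step: argue by contradiction, get solvability of $\la x,x^g\ra$ from Lemma \ref{lem:solvable}, reduce to the almost simple case, apply Lemma \ref{Guest2} to force $p=3$ and the short list of socles, and then exploit strong closure (Lemma \ref{strong}). Two remarks on the reduction: rather than hand-rolling a wreath-product computation (``one checks it is not $p$-solvable''), the paper simply cites Lemma \ref{Guest1}; and note that your choice of an \emph{order-$p$} element of $H$ acting nontrivially on the components need not exist (e.g.\ $H\cong C_9$ acting through $C_3$ with its order-$3$ subgroup in the kernel) --- the paper takes an arbitrary $x\in H-N_H(W)$ and uses that $p$ is odd to get $x^2\notin C_G(W)$, which is all Lemma \ref{Guest1} needs.

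The genuine gap is your endgame. Your claim that in $\UU_3(3)$ ``no nontrivial abelian subgroup of a Sylow $3$-subgroup is strongly closed --- the class of transvections fuses inside $P$'' is false: with $P\cong 3_+^{1+2}$, the transvections meeting $P$ are exactly the nonidentity elements of $Z(P)$ (the $24$ non-central elements of order $3$ lie in the other class of order-$3$ elements, with centralizer of order $9$), so $Z(P)$ \emph{is} a nontrivial strongly closed abelian $3$-subgroup. Correspondingly, the Flores--Foote classification does admit the rank-one characteristic-$p$ groups with $A=Z(P)$, so strong closure alone cannot ``dispatch all other cases uniformly'' nor kill $\UU_3(3)$; you must use the full strength of extreme closure. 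This is exactly how the paper proceeds: it first disposes of $|H|=3$ by Theorem \ref{th2} together with Lemma \ref{lem:complement} (for $\UU_3(3)$ the obstruction is that $\la x,x^g\ra\cong\SL_2(3)$ is not a Frobenius group, not a fusion failure), forcing $|H|\geq 9$; it then applies \cite[Theorem 1.2(i)]{FF} only in the $\UU_n(2)$/non-simple cases, handles the higher-rank groups over $\FF_3$ by a parabolic-subgroup induction showing $N_G(H)$ would contain every maximal parabolic over the Borel, and finally, for $G\cong\UU_3(3)$ with $|H|=9$, shows every $\la H,H^g\ra$ is contained in a Borel subgroup, whence $\la H^G\ra\leq B<G$, a contradiction. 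Without an argument of this kind (or at least the observation that an abelian subgroup of order $9$ of $P$ cannot be normal in $N_G(P)$, hence is not strongly closed in $P$), your proof does not close.
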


\begin{proof}
Suppose by contradiction that $H$ is an abelian extremely closed $p$-subgroup of $G$. By Lemma \ref{lem:solvable}, we have $\la H,H^g\ra$ is solvable for all $g\in G.$ In particular, if $x\in H$, then $\la x,x^g\ra$ is solvable for all $g\in G.$

By the uniqueness of $N$, we have $G=\la H^G\ra=HN$. We first show that $G$ is an  almost simple group with socle $S$.
Let $W\unlhd N$ be a component of $G.$ Assume that $W\neq N.$ As $W\unlhd N,$ $N\leq N_G(W)$ and so $N_G(W)=N_H(W)N.$ Since $W<N,$ $N_G(W)<G$ and so $N_H(W)<H.$ Let $x\in H-N_H(W).$ Then $x\not\in N_G(W)$ and $x^2\not\in C_G(W)$ since $p$ is odd, whence $x$ and $W$ satisfy the hypothesis of Lemma \ref{Guest1} and hence $\la x,x^g\ra$ is not solvable for some $g\in G$, a contradiction. Thus $W\unlhd G$ and so $G=HN,$ where $N$ is a non-abelian simple group which is also a minimal normal subgroup of $G.$ Thus $G$ is an almost simple group with socle $S$ as wanted.

Let $x\in H$ be an element of order $p$. Then $\la x,x^g\ra$ is solvable for all $g\in G.$ By Lemma \ref{Guest2},  $p=3$ and $S$ is a finite simple group of Lie type defined over $\FF_3$ or $S\cong \UU_n(2), n\geq 4.$ If $|H|=3,$ then $\la H^G\ra$ is solvable by Theorem \ref{th2} and Lemma \ref{lem:complement}, which is impossible. Thus we may assume that $|H|\geq 9.$ Since $G=HS$, we have $G/S\cong H/(H\cap S)$ is a $3$-group and thus if $G\neq S$, then elements in $H-S$ induce outer automorphisms of $S$ of $3$-power order. 
 
\smallskip
$(i)$ Assume $S\cong \UU_n(2),n\geq 4$ or $G\neq S.$  If $S\cong \UU_n(2),$ then since  $n\geq 4, $ we have $|\Out(\UU_n(2))|=2(n,3)$. If $G\neq S$ and $S\not\cong \UU_n(2)$, then $S\cong {\rm D}_4(3)$ or ${}^3{\rm D}_4(3)$ since $S$ has no nontrivial field automorphism so $\Out(S)$ contains diagonal automorphisms and possibly graph automorphisms only. In all cases, the Sylow $3$-subgroup of $\Out(S)$ has order at most  $3$. Hence $|G:S|=|H: S\cap H|\leq 3$. 

Since $|H|\ge 9$, if $G$ is not simple then  $A=H\cap S> 1.$ By Lemma \ref{strong}(ii), $A$ is
strongly closed in $S.$ If $G=S=\UU_n(2)$, then $A=H$ is strongly closed in $S$. In both cases, $S$ contains a nontrivial abelian $3$-subgroup $A$ which is
strongly closed in $S.$ Since $S$ is simple, $S=\la A^S\ra.$   As a Sylow $3$-subgroup of $S$ is non-abelian,  $A$ cannot be a Sylow
$3$-subgroup of $S$; however  this contradicts  \cite[Theorem 1.2 (i)]{FF}.

\smallskip
$(ii)$ $G=S$ is a finite simple group of Lie type defined over $\FF_3$. The possibilities for $G$ can be read off from Table 1 in \cite{Guest} and note the correction in Remark 5.2 in \cite{PS}.

 Let $P$ be a Sylow $p$-subgroup of $G$ containing $H,$ and let $M$ be a maximal parabolic subgroup of $G$ containing the Borel subgroup $B:=N_G(P)$.
Let $R=O_p(M).$ Then $M=N_G(R)$. By induction, we
have $\la H^M\ra\cap N_M(H)=H,$ and hence $\la H^M\ra=HU,$ where $U=O_{p'}(\la H^M\ra)$ and
$C_U(H)=1.$ Therefore, $U$ is a solvable normal $p'$-subgroup of $M$ since $\la H^M\ra\unlhd M$. As $H\leq \la H^M\ra \unlhd M$ and $H$ is pronormal in $M$, we have $M=N_M(H)U$. However, as $M$ is a parabolic subgroup
of a finite simple group of Lie type $G$,  $F^*(M)=O_p(M)$ (Corollaries 3.1.4 and 3.1.5 in \cite{GLS}), so  $U=1.$ Hence $H\unlhd M$ and $M=N_G(H)$.
Thus $M$ contains every maximal parabolic subgroup of $G$ that contains the Borel subgroup
$B.$ However this happens only when the Lie rank of $G$ is $1.$ Therefore $G\cong \UU_3(3).$ (We can also use Theorem 1.2 in \cite{FF} to arrive at this conclusion.)

From the Atlas \cite{atlas}, we have $P\cong 3_+^{1+2}$ is an extraspecial group of order $27$ and exponent $3$. Thus
$|H|=9$. Since $G$ has Lie rank $1$, the Borel subgroup $B$ of $G$ is a maximal subgroup of $G$. Hence $B=N_G(H).$ Let $g\in G-B$ and $T=\la H,H^g\ra$. We know that $T=HO_{3'}(T)$ is solvable, so $T<G$ and thus $T$ lies in some maximal subgroup of $G$ whose order must be divisible by $|H|=9.$ Inspecting the list of maximal subgroups of $G$ in the Atlas \cite{atlas}, the only maximal overgroups of $H$ and $T$ in $G$ are the Borel subgroups $B$ and its $G$-conjugates.  Hence  $\la H,H^g\ra \leq B^t$ for some $t\in G.$ Note that $B=P:W$, where $W=\la y\ra$ is a cyclic group of order $8$. Since $B^t$ has a normal Sylow $3$-subgroup, we must have that $H\leq P^t\unlhd B^t$ and hence $H$ is subnormal in $B^t.$ Since $H$ is pronormal in $G$ and hence in $B^t,$ we have $H\unlhd B^t$ or $B^t=N_G(H)=B$. Therefore, $\la H,H^g\ra\leq B$ for all $g\in G.$ However, this implies that $\la H^G\ra\leq B$, which is a contradiction.
The proof is now complete.
\end{proof}

\begin{proof}[\bf{Proof of Theorem \ref{th3}}.]
Let $G$ be a counterexample to Theorem \ref{th3} with  minimal order. Then $\la H,H^g\ra\cap N_G(H)=H$ for all $g\in G$ but $G\neq N_G(H)O_{p'}(G)$, where $H$ is an abelian $p$-subgroup of $G$.

\smallskip
(1) $\la H^G\ra=G.$  
Let $L=\la H^G\ra.$ Assume  $L\neq G.$ By Lemmas \ref{induct}$(ii)$ and \ref{proab}$(iii),$ we have $G=N_G(H)L.$ By Lemma \ref{induct}$(iii)$ and the minimality of $G,$ we have $L=N_L(H)O_{p'}(L).$ As $L\unlhd G,$ $O_{p'}(L)\leq O_{p'}(G),$ and hence $$G=N_G(H)L=N_G(H)O_{p'}(L)=N_G(H)O_{p'}(G).$$ This contradictions shows that $L=G.$

\smallskip
(2) $O_{p'}(G)=1.$ Suppose by contradiction that $N=O_{p'}(G)\neq 1.$ It follows from Lemma \ref{induct}$(vi)$ that the hypothesis carries over to $\overline{G}=G/N,$   and so by the minimality of $G,$ we obtain $\overline{G}=N_{\overline{G}}(\overline{H})O_{p'}(\overline{G}).$ But $O_{p'}(\overline{G})=O_{p'}(G/O_{p'}(G))=1,$ and hence $\overline{G}=\overline{N_G(H)}$ by Lemma \ref{induct}$(iv).$ Therefore, $G=N_G(H)N=N_G(H)O_{p'}(G),$ which is a contradiction. This proves the claim.

\smallskip
(3) $O_p(G)=1.$ Assume $O_p(G)\neq 1.$ Let $N\leq O_p(G)$ be a minimal normal subgroup of $G$ and let $U=H\cap N.$ Observe first that $O_p(G)\leq N_G(H)$ since $P\leq N_G(H)$ for any $H\leq P\in \Syl_p(G).$ It follows that $N\leq N_G(H).$ We will show that $N\leq Z(G).$ Suppose first that $U\neq 1.$ Lemma \ref{strong}(iii) implies that $U\unlhd G.$ By the minimality of $N,$ we have $U=N,$ and so $N\leq H.$ As $H$ is abelian, $H\leq C_G(N)\unlhd G.$ By (1), we have  $C_G(N)=G,$ and $N\leq Z(G).$  If $U=H\cap N=1,$ then as $H$ and $N$ are both normal in $N_G(H),$ we have $[H,N]\leq H\cap N=1$ and thus $H\leq C_G(N)\unlhd G,$ so  $N\leq Z(G).$

By Lemma \ref{induct}$(vi),$ the hypothesis carries over to $\overline{G}=G/N,$   and so by the minimality of $G,$ we obtain $\overline{G}=N_{\overline{G}}(\overline{H})O_{p'}(\overline{G}).$ Let $K\leq G$ be such that $N\leq K$ and $K/N=O_{p'}(G/N).$ Then $N\unlhd K\unlhd G$ and  $|K:N|$ is odd.  By the Schur-Zassenhaus Theorem, $K=NT$ where $T$ is  Hall $p'$-subgroup of $G$. Moreover, since $N$ is central in $G$, $T\unlhd K$ and hence $T\unlhd G$ as it is characteristic in $K\unlhd G$. It follows that $T\leq O_{p'}(G)=1$ and thus $G=N_G(H)K=N_G(H)N=N_G(H).$
This contradiction proves the claim.

\smallskip
(4) Let $N$ be a minimal normal subgroup of $G$.  We claim that $G=HN$ and $N$ is the unique minimal normal subgroup of $G$. By (2) and (3),  $N\cong S^k$ for some finite non-abelian simple group $S$ with $p\mid |S| $ and some integer $k\ge 1$. Let $M=HN.$ Suppose that $M<G.$ By the minimality of $G$, we have $M=N_M(H)O_{p'}(M).$ Hence $O_{p'}(M)\leq N,$ and so $O_{p'}(M)=1$. We deduce that $H\unlhd M.$ As $H$ is an abelian $p$-group and $N\cong S^k$, we have $H\cap N=1$ and hence  $[H,N]\leq H\cap N=1 $. Thus $H\leq C_G(N)\unlhd G.$ By (1), we have $C_G(N)=G,$ and then $N\leq Z(G).$ This contradiction shows that $G=HN.$ Since $G/N\cong H/H\cap N$ is abelian, $N$ must be a unique minimal normal subgroup of $G$ as wanted.

\smallskip
If $p$ is odd,  then Proposition \ref{prop:Guest} yields a contradiction.
Thus for the remaining, we assume that $p=2$.

\smallskip
(5)  We next claim that $G$ is finite non-abelian simple group.   By (4), $F^*(G)=N$ and $H$ is a strongly closed abelian $2$-subgroup of $G$. Now by \cite[Theorem A]{Gold}, we have $F^*(G)=G$ and so $N=G.$ It follows that $G=S$ is simple. 
By \cite[Theorem A]{Gold} again, $G$ is isomorphic to one of the following groups:

$(i)$ $\LL_2(2^n),n\geq 3;$ ${}^2{\rm B}_2(2^{2n+1}),n\geq 1;$ or  $\UU_3(2^n),n\geq 2.$

$(ii)$ $\LL_2(q),q\equiv 3,5 \mbox{ (mod $8$)}.$

$(ii)$ ${}^2{\rm G}_2(3^{2n+1}),n\geq 1;$ or ${\rm J}_1,$ the first Janko group.

By Glauberman's $Z^*$-theorem, we may assume that $|H|\ge 4.$

(6) The final contradiction. We now consider each case above separately.

(a) Assume $G$ is isomorphic to one of the groups in $(i).$
Let $H\leq P\in \Syl_2(G)$ and let $B=PT$ be the Borel subgroup of $G$ containing $P.$ By \cite[(3.2)]{Gold}, $H= Z(P)$ is a non-cyclic elementary abelian $2$-group, $P$ is a $T.I$ subgroup of $G,$ and $B$ is the unique maximal subgroup of $G$ containing $P.$ It follows that $B=N_G(H)$. Observe that  for any $1\neq x\in H,$ $P\leq C_G(x)\leq B,$ as $P$ is uniquely contained in $B.$ For $g\in G-B,$ let $T=\la H,H^g\ra.$ Then $N_T(H)=H$ and then by Burnside's normal  $p$-complement theorem, we have $T=HU,$ where $U=O_{2'}(T)\unlhd T.$ As $H$ is non-cyclic abelian, $U=\la C_U(a):1\neq a\in H\ra$ (see \cite[8.3.4]{KS}). However, as $C_G(a)\leq B$ for all $1\neq a\in H,$ we have $U\leq B$ and so $T=\la H,H^g\ra\leq B.$ From the hypothesis, we must have $T=T\cap B=H,$ hence $H^g=H.$ This implies that $g\in B,$ contradicting the choice of $g.$

(b) Assume that $G\cong \LL_2(5)\cong \Alt_5.$ By \cite[(3.4)]{Gold}, $H\in \Syl_2(G),$ $ N_G(H)\cong \Alt_4$ and $N_G(H)$ is the unique maximal subgroup of $G$ containing $C_G(a),$ for all $1\neq a\in H.$ Take $g\in G-N_G(H),$ and let $T=\la H,H^g\ra.$ Then $T=HU,$ where $U=O_{2'}(T).$ As $H$ is non-cyclic, $U=\la C_U(a):1\neq a\in H\ra\leq N_G(H).$ This leads to a contradiction as in the previous case.

(c) Assume $G$ is isomorphic to one of the groups in $(ii)$ with $q\geq 11.$ By \cite[(3.4)]{Gold}, we have $H\in \Syl_2(G)$ and $N_G(H)\cong \Alt_4.$  Clearly, $G$ contains a maximal subgroup $M$ isomorphic to the dihedral group $D_{q\pm 1}$ such that $M$ does not contain $H$. Assume $M$ is generated by two involutions $a,b.$  We can choose $a,b$ such that $b\in H.$ Now $G=\la M,H\ra\leq \la a,H\ra,$ and hence $G=\la a,H\ra.$ By Sylow's Theorem, there exists some $g\in G$ such that $a\in H^g.$ Thus $G=\la a,H\ra\leq \la H^g,H\ra,$ and then $G=\la H,H^g\ra,$ which contradicts the hypothesis that $\la H,H^g\ra \cap N_G(H)=H.$

(d) Finally, assume that $G$ is isomorphic to one of the groups in $(iii).$  By \cite[(3.4)]{Gold}, we have $H\in \Syl_2(G).$ Now $H$ contains an involution $t$ such that $C_G(t)=\la t\ra\times L,$ where $L\cong \LL_2(q),$ $q\equiv 3,5$  {(mod $8$)} and $[G:C_G(t)]$ is odd (see \cite{Wal}). As $H\leq C_G(t)<G,$ by the minimality of $G$, we obtain $C_G(t)=(C_G(t)\cap N_G(H))O_{2'}(C_G(t)).$ However, as $C_G(t)=\la t\ra\times L,$ where $L$ is non-abelian simple, it follows that $O_{2'}(C_G(t))=1$ whence $C_G(t)\leq N_G(H)$. Hence $C_G(t)=N_G(H)$ since $C_G(t)$ is maximal in $G$ by \cite[$(3.4)$]{Gold}. It follows that $H\unlhd C_G(t)$ and then  $H\cap L\unlhd L,$ where $H\cap L\in \Syl_2(L),$ which contradicts  the simplicity of $L.$  
\end{proof}


\begin{proof}[\bf{Proof of Corollary \ref{cor:special}}]
Let $G$ be a counterexample to the corollary with minimal order. Then $H$ is an abelian $p$-subgroup for some odd prime $p$ and $\la H,H^g\ra\cap N_G(H)=H$ for all $g\in G$ but $\la H^G\ra\cap N_G(H)\neq H.$ By Theorem \ref{th3}, we have $G=N_G(H)O_{p'}(G)$. Now we have that  $$\la H^G\ra=\la H^{N_G(H)O_{p'}(G)}\ra=\la H^{O_{p'}(G)}\ra\leq HO_{p'}(G).$$
Let $L=\la H^G\ra.$ Then $L=H(L\cap O_{p'}(G))=HO_{p'}(L)$ and so $L$ has a normal $p$-complement. Assume that $L<G$. By the minimality of $G$, we have $\la H^L\ra\cap N_L(H)=H.$ However, $L=\la H^L\ra$ by Lemmas \ref{induct}(ii) and  \ref{proab}(vi) and thus $N_L(H)=H$ or equivalently $\la H^G\ra\cap N_G(H)=H$, a contradiction. Therefore, we can assume that $G=\la H^G\ra=HO_{p'}(G)$. 

Let $U=O_{p'}(G)$ and let $Q=[U,H]$. By \cite[8.2.7]{KS}, $U=QC_U(H)$ and $Q=[Q,H]$. Furthermore, $N_G(H)=C_G(H)=HC_U(H)$ and thus $G=HC_U(H)Q$. As $G=\la H^G\ra=H^{HC_U(H)Q}\leq HQ$, we obtain $G=HQ$ whence $U=Q$.

Let $N\leq U$ be a minimal normal subgroup of $G$. Since $HN/N$ is an abelian extremely closed $p$-subgroup of $G/N$ by Lemma \ref{induct}(vi) and  $G/N=\la (HN/N)^{G/N}\ra$, we have $N_G(HN)=HN$ and hence by Lemma \ref{induct} (iv), $N_G(H)N=HN.$ Moreover, $C_{U/N}(HN/N)=1$. It follows that $U/N$ is solvable by Lemma \ref{lem:coprime}.

Assume that $N$ is abelian. Then $U$ is solvable. Let $1\neq u\in U$ and let $T=\la H,H^u\ra.$ Then $T=H[u,H],$ where $[u,H]=O_{p'}(T)$ and $C_{[u,H]}(H)=1$ as $H$ is self-normalizing in $T.$ By Lemma \ref{lem:generators}(i), $C_{[U,H]}(H)=\la C_{[u,H]}(H):u\in U\ra=1.$ Thus $C_U(H)=C_{[U,H]}(H)=1$ and so $N_G(H)=C_G(H)=HC_U(H)=H.$ Therefore, $\la H^G\ra\cap N_G(H)=H$, which is a contradiction.

Assume that $N\cong S^k$, where $S$ is a nonabelian simple group and $k\ge 1$ is an integer. 
  Assume that $K=HN<G.$ By the minimality of $G,$ we have $\la H^K\ra\cap N_K(H)=H.$ Thus $\la H^K\ra=HQ$ and $C_Q(H)=1,$ where $Q=O_{p'}\la H^K\ra.$ As $Q$ is characteristic in $\la H^K\ra\unlhd K,$ it follows that $Q \unlhd K.$ Since $|K:N|$ is a power of $p$ and $Q$ is a $p'$-group, we must have $Q\leq N$ and hence $Q\unlhd N$. By \cite[1.7.5]{KS}, $Q$ is isomorphic to a direct product of the non-abelian simple group $S,$ so $Q$ is not solvable or $Q=1$. If the former case holds, then since $C_Q(H)=1$ and $Q$ is a $p'$-group,  Lemma \ref{lem:coprime} implies that $Q$ is solvable, which is a contradiction as it is a direct product of copies of $S$. Therefore, $Q=1$ and hence $H\unlhd K$. It follows that $R=H\cap N\unlhd N$. However, as $N\cong S^k$ and $H\cap N$ is a normal $p$-subgroup of $N$, we must have $H\cap N=1$. Hence $[H,N]\leq H\cap N=1$ and  so $H\leq C_G(N)\unlhd G$. Since $G=\la H^G\ra$, we have $C_G(N)=G$ or $N\leq Z(G)$, a contradiction. Therefore $G=HN$ and since $G/N$ is solvable, $N$ is a unique minimal normal subgroup of $G$. Now Proposition \ref{prop:Guest} yields a contradiction. The proof is now complete. 
\end{proof}

\begin{proof}[\bf{Proof of Corollary \ref{cor}}]
Let  $H$ be an extremely closed abelian $p$-subgroup of $G$ for some prime $p$. Assume first that $p=2$. By Theorem \ref{th3}, $G=N_G(H)O_{2'}(G)$ which implies that $HO_{2'}(G)\unlhd G$ and clearly $HO_{2'}(G)$ is solvable and thus $H\subseteq R(G).$ Assume now that $p$ is odd. By Corollary \ref{cor:special}, we have $N_G(H)\cap \la H^G\ra=H$  and thus $\la H^G\ra$ is solvable by Lemma \ref{lem:complement}. The proof is now complete.
\end{proof}

\section{A $p$-solvability criterion}\label{sec4}

Let $p$ be a prime. A finite group $G$ is said to be a minimal non-$p$-solvable group if $G$ is not $p$-solvable but every proper subgroup of $G$ is $p$-solvable.  A minimal simple group is a non-abelian finite simple groups whose all proper subgroups are solvable. Observe that  minimal non-$2$-solvable simple groups are exactly the minimal simple groups and these groups are classified by Thompson in \cite{Thompson}.

\begin{lem}\label{lem:minimal-simple}
Every minimal simple group is isomorphic to one of the following simple groups:
\begin{itemize}

\item[$(1)$] $\LL_2(2^r), r$ is a prime.

\item[$(2)$] $\LL_2(3^r), r$ is an odd prime.

\item[$(3)$] $\LL_2(r), r>3$ is a prime such that $5\mid r^2+1$.

\item[$(4)$] ${}^2{\rm B}_2(2^r),r$ is an odd prime.

\item[$(5)$] $\LL_3(3).$
\end{itemize}
\end{lem}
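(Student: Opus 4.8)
The plan is to recover Thompson's classification of minimal simple groups, which is precisely the content of this lemma, since a minimal simple group is by definition a finite non-abelian simple group all of whose proper subgroups are solvable. First I would observe that this is not something to reprove from scratch: Thompson's $N$-group paper classifies minimal simple groups, and the list in the statement is exactly the one that appears there (after translating notation). So the proof is essentially a citation of \cite{Thompson}, together with the remark already made in the text that minimal non-$2$-solvable simple groups coincide with minimal simple groups. I would write: ``This is the classification of minimal simple groups due to Thompson \cite{Thompson}; see also the discussion preceding the statement.''

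If instead one wants a genuinely self-contained argument (which I do not expect the paper to give), the strategy would be structural. Let $G$ be a minimal simple group. First analyze a Sylow $2$-subgroup $P$ of $G$: every proper subgroup being solvable forces strong restrictions via Burnside's normal $p$-complement theorem and Frobenius' criterion, pushing $P$ toward being elementary abelian or dihedral or of small order. One then invokes the classification of simple groups with such Sylow $2$-subgroups (Walter's theorem for abelian Sylow $2$-subgroups, the Gorenstein--Walter theorem for dihedral Sylow $2$-subgroups) to land inside the families $\LL_2(q)$ and a few sporadic-type exceptions, and then checks which of these actually have all proper subgroups solvable. For $\LL_2(q)$ the subgroup structure is completely known (Dickson's list of subgroups of $\PSL_2(q)$), so one directly reads off which $q$ work: $q=2^r$ with $r$ prime, $q=3^r$ with $r$ an odd prime, and $q$ a prime $>3$ with $q^2+1\equiv 0\pmod 5$. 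The Suzuki groups ${}^2{\rm B}_2(2^r)$ with $r$ an odd prime enter because their maximal subgroups are all solvable (Frobenius groups and small dihedral-type groups), and $\LL_3(3)$ is the remaining exceptional small case, identified by direct inspection of its maximal subgroups in the Atlas. Finally one verifies completeness: any other candidate simple group contains a non-solvable proper subgroup, e.g.\ via its known maximal subgroups or via the existence of a subgroup isomorphic to one of $\Alt_5$, $\PSL_2(7)$, etc.

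The main obstacle in a from-scratch proof would be the Sylow $2$-subgroup analysis and the appeal to the Walter and Gorenstein--Walter classification theorems (themselves deep), together with verifying the subgroup lattices of $\LL_2(q)$ and ${}^2{\rm B}_2(2^r)$ carefully enough to pin down exactly the arithmetic conditions on $r$ and $q$. Since all of this is classical and due to Thompson, the honest and correct course here is simply to cite \cite{Thompson}, and that is what I would do for this lemma.

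\begin{proof}
This is the classification of minimal simple groups, due to Thompson; see \cite{Thompson}. (Recall, as noted above, that the minimal simple groups are exactly the minimal non-$2$-solvable simple groups.) Concretely, if $G$ is a finite non-abelian simple group all of whose proper subgroups are solvable, then an analysis of a Sylow $2$-subgroup of $G$ via Burnside's and Frobenius' normal $p$-complement criteria, together with the classification of simple groups with abelian or dihedral Sylow $2$-subgroups, restricts $G$ to the families $\LL_2(q)$, ${}^2{\rm B}_2(2^r)$ and the group $\LL_3(3)$; inspecting the (well-known) subgroup structure of these groups then yields exactly the list $(1)$--$(5)$.
\end{proof}
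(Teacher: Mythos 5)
Your proof is correct and matches the paper exactly: the paper's entire proof is the citation ``This is \cite[Corollary 1]{Thompson}.'' The additional structural sketch you provide is not needed, but the core move --- citing Thompson's classification of minimal simple groups --- is precisely what the paper does.
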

\begin{proof}
This is \cite[Corollary 1]{Thompson}.
\end{proof}
The next result classifies minimal non-$3$-solvable simple groups.
\begin{lem}\label{lem:minimal-3}
Let $G$ be a finite non-abelian simple group. Assume that every proper subgroup of $G$ is $3$-solvable. Then $G$ is isomorphic to a minimal simple group or to the Suzuki group ${}^2{\rm B}_2(q)$ with $q=2^{2m+1},m\ge 1.$
\end{lem}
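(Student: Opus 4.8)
The plan is to prove this $p=3$ analogue of Lemma \ref{lem:minimal-simple} by splitting into two cases according to whether $3$ divides $|G|$. If $3\nmid |G|$, then $G$ is a finite non-abelian simple group of order coprime to $3$, and by the classification of finite simple groups the only such groups are the Suzuki groups ${}^2{\rm B}_2(2^{2m+1})$ with $m\ge 1$ (recall $|{}^2{\rm B}_2(q)|=q^2(q^2+1)(q-1)$ and, for $q=2^{2m+1}$, $q\equiv 2\pmod 3$, so indeed $3\nmid |{}^2{\rm B}_2(q)|$); thus the conclusion holds in this case.

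So assume $3\mid |G|$. Then $G$ is not $3$-solvable, and since every proper subgroup is $3$-solvable, $G$ is a minimal non-$3$-solvable group. I claim it suffices to show that every proper subgroup of $G$ is solvable: if so, then $G$ is a minimal simple group, hence one of the groups in Lemma \ref{lem:minimal-simple}, and the conclusion follows (the Suzuki groups occur in that list). Suppose for contradiction that $G$ has a non-solvable proper subgroup, and let $M$ be one of minimal order; then $M$ is minimal non-solvable. Choosing a maximal normal subgroup $N$ of $M$, the quotient $T:=M/N$ is simple; since $N<M$ is solvable (all proper subgroups of $M$ are solvable) while $M$ is not, $T$ is non-abelian, and every proper subgroup of $T$ is solvable, so $T$ is a minimal simple group. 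But $T$ is a quotient of $M\le G$, and $M$ is $3$-solvable (being a proper subgroup of $G$), so $T$ is $3$-solvable; as $T$ is non-abelian simple this forces $3\nmid |T|$. By Lemma \ref{lem:minimal-simple}, the minimal simple groups of order prime to $3$ are exactly the Suzuki groups ${}^2{\rm B}_2(2^r)$ with $r$ an odd prime, so $T\cong {}^2{\rm B}_2(2^r)$; in particular $G$ has a proper subgroup involving a Suzuki group.

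The remaining task is to rule out this configuration, namely a finite non-abelian simple group $G$ with $3\mid |G|$, every proper subgroup $3$-solvable, and a proper subgroup involving ${}^2{\rm B}_2(2^r)$. This is the main obstacle and is where the classification of finite simple groups is used in earnest: one runs through the finite simple groups $G$ with $3\mid |G|$ and, using the known structure of their maximal subgroups, verifies that each such $G$ that is not a minimal simple group contains a proper subgroup with a non-abelian simple composition factor of order divisible by $3$ (for instance, a group of Lie type of rank $\ge 2$ in any characteristic has a parabolic or subfield subgroup with such a section, the alternating groups $\mathrm{A}_n$ with $n\ge 6$ contain $\mathrm{A}_5$, and so on), hence a proper subgroup that is not $3$-solvable. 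Consequently the only possibilities left are the minimal simple groups of order divisible by $3$ listed in Lemma \ref{lem:minimal-simple} --- namely $\LL_2(2^r)$ with $r$ prime, $\LL_2(3^r)$ with $r$ an odd prime, $\LL_2(r)$ with $r>3$ prime and $5\mid r^2+1$, and $\LL_3(3)$ --- none of which has a non-solvable proper subgroup. This contradicts the existence of $M$ and completes the proof.
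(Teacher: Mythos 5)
Your reduction is sound as far as it goes: the case $3\nmid|G|$ is handled by the CFSG fact that the only non-abelian simple groups of order coprime to $3$ are the Suzuki groups, and in the case $3\mid|G|$ your minimal non-solvable subgroup $M$ correctly yields a simple section $T=M/N$ that is a minimal simple group, which (being $3$-solvable and non-abelian simple) must satisfy $3\nmid|T|$ and hence be ${}^2{\rm B}_2(2^r)$. But the decisive step --- ``one runs through the finite simple groups $G$ with $3\mid|G|$ and \ldots verifies that each such $G$ that is not a minimal simple group contains a proper subgroup with a non-abelian simple composition factor of order divisible by $3$'' --- is not an argument; it is a restatement of the lemma. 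That exhaustive verification over all families (all $\LL_2(q)$ via Dickson's subgroup list, the rank-one groups $\UU_3(q)$ and ${}^2{\rm G}_2(q)$, subfield and parabolic subgroups in higher rank, the alternating and sporadic groups, and in particular the groups such as ${}^2{\rm F}_4(q)$ and ${\rm Sp}_4(2^n)$ that actually do involve Suzuki subgroups) is the entire content of the result; it is precisely Lemma 5.3 of Guralnick--Tiep \cite{GT}, which is what the paper cites in lieu of a proof. As written, your proposal defers the whole burden to an unexecuted case analysis, so it is not a complete proof.

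A secondary point: the conclusion that $G$ has a proper subgroup involving ${}^2{\rm B}_2(2^r)$ is never used in your final step, which is just the brute-force check over all simple groups. If you intend the Suzuki section to do real work, the argument should be organized around classifying the simple groups $G$ with $3\mid|G|$ that contain a subgroup involving a Suzuki group and checking $3$-solvability of their other proper subgroups; otherwise the detour through $M$ and $T$ can be deleted without loss. Either way, the missing verification must be supplied (or the result cited, as the paper does) for the proof to stand.
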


\begin{proof}
This is Lemma 5.3 in \cite{GT}.
\end{proof}

Finally, we need the classification of finite non-$p$-solvable simple groups for any primes $p\ge 5$.
\begin{lem}\label{lem:minimal-p}
Let $G$ be a finite non-abelian simple group and let $p\geq 5$ be a prime dividing $|G|$. Assume that every proper subgroup of $G$ is $p$-solvable. Then one of the following holds.

\begin{itemize}
\item[$(1)$] $G=\LL_2(p)$.
\item[$(2)$] $G=\Alt_p.$
\item[$(3)$] $G=\LL_2(q)$ with $p\mid q^2-1$.
\item[$(4)$] $G=\LL_n(q), n\ge 3$ is odd, and $p$ divides $q^n-1$ but not $ \prod_{i=1}^{n-1}(q^i-1)$.
\item[$(5)$] $G=\UU_n(q), n\ge 3$ is odd, and $p$ divides $q^n-(-1)^n$ but not $ \prod_{i=1}^{n-1}(q^i-(-1)^i)$.
\item[$(6)$] $G={}^2{\rm B}_2(q)$ with $q=2^{2m+1},m\ge 1.$
\item[$(7)$] $G={}^2{\rm G}_2(q)$ and $p\mid (q^2-q+1)$, where $q=3^{2m+1},m\ge 1.$
\item[$(8)$] $G={}^2{\rm F}_4(q)$ with $q=2^{2m+1},m\ge 1$ and $p\mid (q^4-q^2+1).$
\item[$(9)$] $G={}^3{\rm D}_4(q)$ and $p\mid (q^4-q^2+1).$
\item[$(10)$] $G={\rm E}_8(q)$  and $p$ divides $ (q^{30}-1)$ but not $\prod_{i\in \{8,14,18,20,24\}}(q^i-1)$.
\item[$(11)$] $(G,p)$  is one of the following $({\rm M}_{23},23)$, $({\rm J}_{1},7 \text{ or } 19)$, $({\rm Ly},$ $37 \text{ or } 67)$, $({\rm J}_{4}, 29 \text{ or } 43)$, $({\rm Fi}_{24}',29)$, $({\rm B},47)$ or $({\rm M}, 41  \text{ or } 59 \text{ or } 71).$
\end{itemize}
\end{lem}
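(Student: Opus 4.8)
The plan is to deduce the statement from the classification of finite simple groups. The first step is a reformulation of the hypothesis: a finite group $K$ is $p$-solvable if and only if each of its composition factors is cyclic of order $p$ or has order prime to $p$; hence $K$ fails to be $p$-solvable exactly when it has a composition factor isomorphic to a non-abelian simple group of order divisible by $p$. Thus the hypothesis is equivalent to: $G$ has no proper subquotient isomorphic to a non-abelian simple group of order divisible by $p$; in particular every proper simple subgroup of $G$ has order prime to $p$. With this in hand I would run through the list of simple groups supplied by the classification.

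If $G=\Alt_n$, then $\Alt_p\leq\Alt_n$ whenever $n\geq p$, and $\Alt_p$ is a non-abelian simple group of order divisible by $p$ for $p\geq 5$; hence the hypothesis forces $n\leq p$, and since $p\mid|\Alt_n|$ we get $n=p$, which is case $(2)$. If $G$ is sporadic, then for each of the $26$ groups and each prime $p\geq 5$ dividing its order one inspects the list of maximal subgroups in the ATLAS \cite{atlas} to decide whether a non-abelian simple section of order divisible by $p$ occurs; this finite verification produces precisely the pairs in case $(11)$.

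The bulk of the work is the groups of Lie type $G=G(q)$, $q=\ell^f$. If $p=\ell$, a Sylow $p$-subgroup is a maximal unipotent subgroup lying in a solvable Borel subgroup, so the obstruction comes instead from the Levi factors of proper parabolic subgroups and from proper subfield subgroups $G(q_0)$, $q_0=\ell^{f_0}$, $f_0\mid f$; requiring all of these to be $p$-solvable (equivalently solvable, as $q_0\geq\ell=p\geq 5$) forces $G$ to have rank $1$ with $q=p$, and since $\UU_3(p)$ has a subgroup isomorphic to $\SL_2(p)$, which is not $p$-solvable, this leaves $G=\LL_2(p)$, case $(1)$. If $p\neq\ell$, I would use the description of Sylow $p$-subgroups and their normalizers in terms of $d=\mathrm{ord}_p(q)$, the theory of $d$-tori and $d$-split Levi subgroups (Brou\'e--Malle--Michel), the Borel--de Siebenthal classification of subsystem subgroups, the classification of subfield subgroups, and Guralnick--Penttila--Praeger--Saxl type results on subgroups containing elements of primitive prime divisor order. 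The condition that none of these proper subgroups be non-$p$-solvable then translates exactly into the divisibility conditions in cases $(3)$--$(10)$: $p$ must be a primitive prime divisor of $q^d-1$ for the relevant large value(s) of $d$ attached to each Lie type (for classical groups, $d$ near the rank; for exceptional groups, a regular number near the Coxeter number) while not dividing the smaller cyclotomic values $\Phi_{d'}(q)$ displayed there; one also discards the finitely many exotic maximal subgroups by order considerations.

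The main obstacle is this last step for the classical and exceptional groups in non-defining characteristic: organizing their subgroup structure --- parabolics, Borel--de Siebenthal subsystem subgroups, twisted subgroups, subfield subgroups, and the Aschbacher and exotic classes --- precisely enough to extract exactly the conditions in $(3)$--$(10)$, and handling the small-field exceptions and the graph- and field-automorphism phenomena. Since the analogues for $p=2$ and $p=3$ are quoted from the literature (\cite{Thompson}, \cite{GT}), I expect the intended proof likewise cites the existing classification of finite simple groups all of whose proper subgroups are $p$-solvable rather than reproving it from scratch.
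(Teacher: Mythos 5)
The paper's entire proof of this lemma is a one-line citation to Lemma 5.4 of Guralnick and Tiep \cite{GT}, exactly as you predicted in your closing paragraph; it does not reprove the classification of minimal non-$p$-solvable simple groups. Your CFSG-based sketch is a reasonable outline of how that cited result is actually established (and your handling of the alternating, sporadic, and defining-characteristic cases is sound), but since the hard non-defining-characteristic Lie-type analysis is only described rather than carried out, the substantive content of your proposal coincides with the paper's: defer to the existing literature.
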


\begin{proof}
This is Lemma 5.4 in \cite{GT}.
\end{proof}

Let $q$ be a prime power and let $n\ge 2$ be an integer. A prime divisor $p$ of $q^n-1$ is called a primitive prime divisor or ppd of $q^n-1$ if $p$ does not divide $q^k-1$ for all integers $k$ with $1\leq k<n.$ Zsigmondy's theorem \cite{Zsig} states that such a ppd $p$ exists unless $(n,q)=(6,2)$ or $n=2$ and $q$ is a Mersenne prime. Now if $n>1$ is an integer and $p$ is a prime, then the $p$-part of $n,$ denoted by $n_p$, is the largest power of $p$ dividing $n.$ We refer the reader to \cite{KL,BHR} for the description of maximal subgroups of finite simple groups of Lie type.

\begin{prop}\label{prop:Sylow-generation} Let  $G$ be a finite non-abelian simple group and $p$ be a prime dividing $|G|$. Assume that every proper subgroup of $G$ is $p$-solvable.  Let  $P$ be a Sylow $p$-subgroup of $G$. Then $G=\la P,P^g\ra$ for some $g\in G.$
\end{prop}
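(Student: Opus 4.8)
The strategy is to go through the classification of minimal non-$p$-solvable simple groups provided by Lemmas~\ref{lem:minimal-simple}, \ref{lem:minimal-3} and \ref{lem:minimal-p}, and in each case exhibit an element $g\in G$ with $\la P,P^g\ra=G$. Equivalently, since $P$ is a $p$-group, it suffices to show that $P$ is \emph{not} contained in any proper subgroup together with one of its conjugates; the cleanest sufficient condition is that $P$ lies in a \emph{unique} maximal subgroup $M$ of $G$ (so $N_G(P)\le M$, in fact $M$ is the unique maximal overgroup of $P$) and that $M$ is not normal — then for any $g\notin M$ we have $M\ne M^g$, both contain $P^{\,?}$... more precisely we want $\la P,P^g\ra\not\le M^t$ for any $t$. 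The key observation to lean on: if $P$ is contained in a unique maximal subgroup $M$, then $P^g\le M$ forces $g\in N_G(M)=M$ (using simplicity, $N_G(M)=M$ for $M$ maximal non-normal), so choosing $g\in G-M$ gives $\la P,P^g\ra$ not inside $M$; one then argues that $M$-conjugates are the only maximal subgroups containing $P$, so $\la P,P^g\ra=G$. This reduces the problem to: \emph{show $P$ is contained in a unique maximal subgroup of $G$ (equivalently $N_G(P)$ is contained in a unique maximal subgroup)}, case by case.

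First I would dispose of the rank-$1$ Lie type groups and the sporadic-type small cases. For $G=\LL_2(q)$ (cases (1)--(3) of Lemma~\ref{lem:minimal-simple}, and the $\LL_2$ cases in Lemma~\ref{lem:minimal-p}): when $p\mid q$, a Sylow $p$-subgroup is the unipotent radical of the Borel $B$, and $B$ is the unique maximal subgroup containing it (standard for $\PSL_2$); when $p\mid q\pm1$, $N_G(P)$ is dihedral and one checks it lies in a unique maximal dihedral subgroup $D_{q\pm1}$ — here one uses that $|P|$ divides the order of only that class of maximal subgroups. For ${}^2{\rm B}_2(q)$ and ${}^2{\rm G}_2(q)$ with $p$ the defining characteristic or a ppd of the relevant cyclotomic factor, the Sylow $p$-subgroup sits inside a unique maximal torus normalizer or Borel, which is again the unique maximal overgroup; these follow from the known maximal subgroup lists (Atlas/\cite{KL,BHR}). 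The cases $\LL_3(3)$, $\LL_n(q)$/$\UU_n(q)$ with $n$ odd and $p$ a ppd, ${}^2{\rm F}_4(q)$, ${}^3{\rm D}_4(q)$, ${\rm E}_8(q)$, and the explicit sporadic pairs in (11): in each of these the defining feature is that $p$ is a \emph{large} primitive prime divisor, so a Sylow $p$-subgroup is cyclic and self-centralizing, its normalizer is the normalizer of a maximal torus (or, for sporadics, a specific small subgroup readable from the Atlas), and Zsigmondy-type arguments plus the maximal subgroup classification show $N_G(P)$ — hence $P$ — lies in a unique maximal subgroup.

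The remaining genuinely different case is $G=\Alt_p$ with $p\ge5$ prime (case (2) of Lemma~\ref{lem:minimal-p}): here $P$ is generated by a single $p$-cycle, $N_G(P)$ has order $p(p-1)/2$, and $P$ is contained in many maximal subgroups a priori, so the ``unique maximal overgroup'' reduction fails. Instead I would argue directly: two $p$-cycles in $\Alt_p$ generate a transitive, hence primitive, subgroup (a transitive group of prime degree is primitive), and by a classical theorem (Jordan) a primitive permutation group of prime degree $p$ containing a $p$-cycle and not contained in $\mathrm{AGL}_1(p)$ is $\Alt_p$ or $\Sym_p$; choosing a second $p$-cycle $P^g$ not normalizing $P$ (so $\la P,P^g\ra\not\le\mathrm{AGL}_1(p)$, which is possible since $\mathrm{AGL}_1(p)$ is solvable while $\Alt_p$ is not for $p\ge5$) forces $\la P,P^g\ra=\Alt_p$. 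This uses $G\ne\LL_2(5),\LL_2(7)$ coincidences being already covered, and for the tiny overlaps ($\Alt_5,\Alt_7$ appearing as $\LL_2$) one just reuses the $\LL_2$ argument.

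The main obstacle I anticipate is \textbf{$G=\Alt_p$}, precisely because the uniform ``unique maximal overgroup of $P$'' mechanism that handles every Lie type and sporadic entry breaks down there; one must bring in the primitive-permutation-group classification of groups of prime degree to finish, and care is needed to choose the conjugating element $g$ so that the generated subgroup escapes the affine normalizer. A secondary, more bookkeeping-type obstacle is verifying ``unique maximal overgroup of $P$'' in the larger exceptional cases (${\rm E}_8(q)$, ${}^2{\rm F}_4(q)$, the sporadics of (11)): there one invokes that $p$ is so large relative to the group that the only maximal subgroups with order divisible by $p$ are torus normalizers (or the explicitly listed small subgroups), which is exactly how those cases arose in Lemma~\ref{lem:minimal-p}, so the needed information is already on the table — it is just a matter of citing \cite{KL,BHR,atlas} correctly.
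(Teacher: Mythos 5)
Your overall architecture --- run through the classification of minimal non-$p$-solvable simple groups and show in each case that $P$ has a unique maximal overgroup --- is essentially the paper's, but there is a genuine error at exactly the step you flag as the main obstacle. The ``classical theorem'' you invoke for $G=\Alt_p$ is false as stated: a primitive group of prime degree $p$ containing a $p$-cycle and not contained in $\mathrm{AGL}_1(p)$ need not contain $\Alt_p$. For example $\LL_2(11)<\Alt_{11}$, ${\rm M}_{23}<\Alt_{23}$, and $\PSL_d(q)<\Alt_p$ with $p=(q^d-1)/(q-1)$ are all primitive of prime degree and contain $p$-cycles; Jordan's theorem needs a cycle of prime length at most $n-3$. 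The gap is repairable --- under the standing hypothesis every proper subgroup of $G$ is $p$-solvable, so any proper transitive overgroup of $P$ of degree $p$ is forced into $\mathrm{AGL}_1(p)$ and hence normalizes $P$, and you then take $g\notin N_G(P)$ --- but as written the key case rests on a false statement. The paper sidesteps all of this with an explicit computation: it takes $x=(1,2,\dots,p)$ and a second $p$-cycle $y$ with $xy$ a $3$-cycle, so $\la x,y\ra\geq\la (1,2,3),(1,2,\dots,p)\ra=\Alt_p$.

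Two further points of comparison. First, ``$P$ lies in a unique maximal subgroup'' is not literally true in several Lie-type cases without invoking the hypothesis: for $\LL_n(q)$ with $n$ composite there is a maximal subgroup of type $\GL_m(q^t)$ containing a Sylow $p$-subgroup, and in other cases subfield subgroups or $\mathcal{S}$-subgroups with socle $\LL_2(p)$ also contain $P$. The paper's mechanism is to observe that each such extra maximal overgroup fails to be $p$-solvable and therefore cannot occur under the hypothesis; only after that does uniqueness hold and the generation argument go through. You gesture at this in your final paragraph, but the use of the $p$-solvability hypothesis must be made explicit, since the uniqueness claim is false without it (the same issue arises already for $\LL_2(q)$ with $p\mid q\pm 1$, where subfield subgroups or $\Alt_5$ can contain $P$). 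Second, the paper dispatches $p=2$ and the defining-characteristic case by citing general two-Sylow generation theorems (\cite{Guralnick}, \cite{BrG}), and handles $p=3$ and the case $\LL_2(q)$ with $p\mid q^2-1$ via Guest's Theorem A$^*$, producing an element $x$ of order $p$ with $\la x,x^g\ra$ nonsolvable and hence equal to $G$ by minimality; this is cleaner than chasing dihedral overgroups in $\LL_2(q)$.
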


\begin{proof} Let $G$ be a finite non-abelian simple group and let $p$ be a prime dividing $|G|$. Let $P\in\Syl_p(G)$. Assume that every proper subgroup of $G$ is $p$-solvable.
Let $x\in P$ with $|x|=p$. Then $\la x,x^g\ra\leq \la P,P^g\ra$ for all $g\in G$ and thus $\la x,x^g\ra$ is $p$-solvable for all $g\in G.$

(a) If $p=2$, then every finite non-abelian simple group $G$ can be generated by two Sylow $2$-subgroups by Theorem A in \cite{Guralnick}. So, we may assume that $p>2$.

(b) If $G$ is a finite non-abelian simple group of Lie type in characteristic $p$, then $G$ is generated by two Sylow $p$-subgroups by Proposition 2.5 in \cite{BrG}.

(c) Assume that  $p=3$. By Lemma \ref{lem:minimal-3}, since $3$ divides $|G|$, $G$ is a minimal simple group.  By part (b), we only need to consider the cases when $G$ is isomorphic to $ \LL_2(2^r),r$ is a prime, or $\LL_2(r),r>3$ is a prime and $5\mid r^2+1$. 

If $G\cong \LL_2(4) $, then we can check by using GAP \cite{GAP} that there exists  $g\in G$ such that $G=\la P,P^g\ra$. Assume next that $G\cong \LL_2(q)$,  $q=2^r$ or $r,$ where $r$ is an odd prime. By \cite[Theorem A*]{Guest}, there exists an element $x\in G$ of order $3$ such that $\la x,x^g\ra$ is nonsolvable for some $g\in G.$ Since $G$ is minimal simple, we must have $G=\la x,x^g\ra$ and thus $G=\la P,P^g\ra.$

(d) Assume that $p\ge 5.$ By part (b), and Lemma \ref{lem:minimal-p}, $G$ is one of the groups listed in $(2)-(11)$ in that lemma. We now consider each case in turn.

(1) Assume $G=\Alt_p$.  In this case $|P|=p$. Without loss of generality, take  $P=\la x\ra$, where $x=(1,2,\dots,p)$ is a $p$-cycle in $\Alt_p.$ Let $y=(1,2,p,p-1,p-2,\dots,3)\in \Alt_p$ be another $p$-cycle. Then $xy=(1,3,2)$ and clearly $$\la x,y\ra=\la (xy)^{-1},x\ra=\la (1,2,3),(1,2,\dots,p)\ra=\Alt_p$$ (see, for example, \cite[Theorem B]{IZ}.) Hence $G$ is generated by two Sylow $p$-subgroups.

(2) Assume $G=\LL_2(q)$ with $p\mid q^2-1$. If $q\leq 11$, we can check using GAP \cite{GAP} that the result holds. Assume $q\ge 13.$ Inspecting the argument in \cite[\S 5.1.2]{Guest}, if $x$ is any element of order $p$, then we can find $g\in G$ such that $\la x,x^g\ra\cong \LL_2(q)$ and hence $\la P,P^g\ra=G$. 

(3) Assume $G=\LL_n(q), n\ge 3$ is odd, and $p$ divides $q^n-1$ but not $ \prod_{i=1}^{n-1}(q^i-1)$. Write $q=s^f$, where $s$ is a prime and $f\ge 1$ is an integer. In this case $p$ is a ppd of $q^n-1$. Hence $P\in\Syl_p(G)$ is cyclic of order $(q^n-1)_p.$ Since $n\equiv 1$ mod $p$ and $n\ge 3$ is odd, we have  $p\ge 2n+1$ and $p\nmid n$. 

Assume that $t$ is a prime divisor of $n$ and write $n=tm$ for some integer $m\ge 1.$ Assume that $m>1$. Then $G$ has a $\mathcal{C}_3$-subgroup $H$ of type $\GL_{m}(q^t)$ (see \cite[Table 3.5A]{KL}) which is maximal and contains a Sylow $p$-subgroup of $G$. Since $n\ge 3$ is odd, $m,t\ge 3$ and so $q^t\ge 2^t\ge 8$. Therefore, $H$ is not $p$-solvable. 

Thus we can assume that $n=t$ is an odd prime. If $P$ lies in a unique maximal subgroup $H$ of $G$, then $H$ is of type $\GL_1(q^n)$ by \cite[Table B]{BBGT}. We can choose $g\in G-H$ such that $P^g\not\leq H$ and hence $G=\la P,P^g\ra$.  Assume that $P$ lies in some other maximal subgroup $M$ of $G$ not of type $\GL_1(q^n)$. As in the proof of Case 3 of Proposition 6.2 in  \cite{BBGT}, $M\in\mathcal{C}_5$ is a subfield subgroup of type $\GL_n(q_0)$, where $q=q_0^k$, $k$ is an odd prime and $(q_0^n-1)_p=(q^n-1)_p$ or $M\in\mathcal{S}$ is almost simple with socle $S\cong \LL_2(p)$ and $n=(p-1)/2$. However, in both cases, $M$ is not $p$-solvable.

(4)  Assume $G=\UU_n(q), n\ge 3$ is odd, and $p$ divides $q^n+1$ but not $ \prod_{i=1}^{n-1}(q^i-(-1)^i)$. Write $q=s^f$ where $s$ is a prime and $f\ge 1$.

(a) Assume that $n=3$. If $q=3$, then $p=7$. In this case, $|P|=7$ and $P$ lies in $\LL_2(7)$ which is not $7$-solvable.  Similarly, if $q=5$, then $p=7$ and $|P|=7$ and $P$ lies in $\Alt_7$.

Assume first that $q$ is a prime.   Let $H$ be a maximal subgroup of $G$ containing $P.$  By the proof of Proposition 6.3 in \cite{BBGT}, either $P$ lies in a unique maximal subgroup of $G$ and we are done or $P$ is contained in $\LL_2(7)$ and $p=7$; however,  $\LL_2(7)$ is not $7$-solvable.

Assume $q=s^f$ with $f>1$. In this case, if  $P$ is not contained in a unique maximal subgroup, then $P$ can be contained in a subfield subgroup of type $\GU_3(q_0)$ with $q=q_0^k$, and $k$ is an odd prime (see \cite[Table 8.5]{BHR}). However such a maximal subgroup is not $p$-solvable.

(b) Assume $n\ge 5.$ Then $p$ is a ppd of $q^{2n}-1$. Hence $p\ge 2n+1$.

Assume that $n=tm$, where $t$ is a prime divisor of $n$ and $m>1$. Since $n\ge 5$ is odd, $t,m\ge 3.$ Then $G$ has a maximal subgroup of type $\GU_m(q^t)$ and contains a Sylow $p$-subgroup of $G$. Since $q^t\ge 2^t\ge 8$, such a maximal subgroup is not $p$-solvable.

Therefore, $n=t\ge 5$ is a prime. Argue as in case (3), if $P$ lies in a unique maximal subgroup of $G$, then the conclusion holds. As in the proof of Proposition 6.4 \cite{BBGT}, $P$ lies in a subfield subgroup $H$ of type $\GU_n(q_0)$, where $q_0^k=q$ and $k\ge 3$ is a prime, or of type $\mathrm{O}_n(q)$ or $H$ is an almost simple group with socle $\LL_2(p)$ with $n=(p-1)/2$ and $7\leq p\equiv 3$ mod 4. However, in all cases, these maximal subgroups are not $p$-solvable.

(5) $G={}^2{\rm B}_2(q)$ with $q=2^{2m+1},m\ge 1.$  Then $|G|=q^2(q-1)(q+s+1)(q-s+1),$ where  $s=\sqrt{2q}=2^{m+1}$. The maximal subgroups of $G$ are listed in \cite[Table 8.16]{BHR}. Since $p$ is not the characteristic of $G$, $p>2$ and $p\mid q-1$ or $p\mid q\pm s+1$. 

Assume first that $p\mid q-1$. Then $P$ lies in maximal subgroups of the form $[q^2]:(q-1)$ and $D_{2(q-1)}$. It follows that $\la P,p^g\ra$ is solvable for all $g\in G.$  Let $x\in P$ with $|x|=p\ge 5.$ Then $\la x,x^g\ra$ is solvable for all $g\in G$. However, this is impossible in view of Theorem  A* in \cite{Guest}.

Assume that $p\mid q\pm s+1.$ In this case, $P$ lies in a maximal subgroup of the form $(q\pm s+1):4$ or a subfield subgroup of the form ${}^2{\rm B}_2(q_0)$, where $q_0^k=q,k\ge3$ is a prime and $q_0>2.$ Clearly, the subfield subgroup is not $p$-solvable (if it contains $P$). Hence $P$ lies in a unique maximal subgroup of $G$ and the result follows.

(6) $G={}^2{\rm G}_2(q)$ with $q=3^{2m+1},m\ge 1$ and $p\mid q^2-q+1$. We can use the same argument as in the previous case using \cite[Table 8.43]{BHR}.

(7) $G={}^2{\rm F}_2(q)$ with $q=2^{2m+1},m\ge 1$ and $p\mid q^4-q^2+1$.  In this case, $p$ is a ppd of $q^{12}-1$. Using the argument in  Proposition 7.2 in \cite{BBGT},  either $P$ lies in a unique maximal subgroup or it lies in a subfield subgroup ${}^2{\rm F}_2(q_0)$, which is not $p$-solvable.

(8) $G={}^3{\rm D}_4(q)$  and $p\mid q^4-q^2+1$.  We can use the argument in Proposition 7.3 in \cite{BBGT} to obtain the conclusion as in the previous case. 

(9) $G={\rm E}_8(q)$ and $p$ divides $ (q^{30}-1)$ but not $\prod_{i\in \{8,14,18,20,24\}}(q^i-1)$. In this case, $p$ is a ppd of $q^{30}-1$. From Proposition  7.10 \cite{BBGT}, either $P$ lies in a unique maximal subgroup and the result follows or $P$ can lie in a maximal exotic local subgroup $2^{5+10}\cdot \LL_5(2)$ when $|P|=p=31$ or $P$ lies in an almost simple group. In the last two possibilities, clearly, these maximal subgroups are not $p$-solvable.

(10) $(G,p)$  is one of the following $({\rm M}_{23},23)$, $({\rm J}_{1},7 \text{ or } 19)$, $({\rm Ly}, 37 \text{ or } 67)$, $({\rm J}_{4}, 29 \text{ or } 43)$, $({\rm Fi}_{24}',29)$, $({\rm B},47)$ or $({\rm M}, 41  \text{ or } 59 \text{ or } 71).$ 

By \cite[Table D]{BBGT}, $P$ lies in the unique maximal subgroup of $G$ and the result follows except for the case $(G,p)=({\rm J}_1,7)$. By the Atlas \cite{atlas}, the maximal subgroups of ${\rm J}_1$ containing a Sylow $7$-subgroup  are isomorphic to either $2^3: 7: 3$ or $7:6$. Thus $\la x,x^g\ra$ is solvable for all $g\in {\rm J}_1$, where $x\in P$ with $|x|=7$. However, this contradicts Theorem A* in \cite{Guest}. The proof is now complete. 
\end{proof}

\begin{rem}
It is conjectured in \cite{BrG} that if $G$ is a finite non-abelian simple group and if $r$ and $s$ are prime divisors of $|G|$, then $G$ can be generated by a Sylow $r$-subgroup and a Sylow $s$-subgroup. The previous proposition is just a special case of this conjecture when $r=s=p$ and $G$ is a minimal non-$p$-solvable  simple group.
\end{rem}

\begin{proof}[\bf{Proof of Theorem \ref{th5}}]
Let $G$ be a finite group and let $p$ be a prime. Let $P$ be a Sylow $p$-subgroup of $G$. If $G$ is $p$-solvable, then every subgroup of $G$ is $p$-solvable. Therefore, it suffices to show that if $\la P,P^g\ra$ is $p$-solvable for all $g\in G,$ then $G$ is $p$-solvable. Suppose not and let $G$ be a counterexample with  minimal order.
Then $\la P,P^g\ra$ is $p$-solvable for all $g\in G$ but $G$ is not solvable.

We first claim that every proper subgroup of $G$ is $p$-solvable and thus $G$ is a minimal non-$p$-solvable group. Let $H$ be a proper subgroup of $G$ and let $Q$ be a Sylow $p$-subgroup of $H$. Then $Q\leq P^t$ for some $t\in G.$ Now for every $h\in H,$ we have $$\la Q,Q^h\ra\leq \la P^t,(P^{t})^h\ra=\la P,P^{tht^{-1}}\ra ^t.$$
Since $\la P,P^{tht^{-1}}\ra$ is $p$-solvable, $\la Q,Q^h\ra$ is $p$-solvable. Therefore, by the minimality of $|G|$, $H$ is $p$-solvable.

By Proposition \ref{prop:Sylow-generation}, we know that $G$ is not a non-abelian simple group. Let $N$ be a proper nontrivial normal subgroup of $G$. Now $PN/N$ is a Sylow $p$-subgroup of $G/N$ and it satisfies the hypothesis of the theorem. Since $|G/N|<|G|$,  $G/N$ is $p$-solvable. As in the previous claim, $N$ is also $p$-solvable and thus $G$ is $p$-solvable as well. This final contradiction proves the theorem.
\end{proof}

\subsection*{Acknowledgment} The author is grateful to the referee for numerous comments
and suggestions that have significantly improved the exposition of the paper. The referee has  simplified the proofs of both Theorem \ref{th1} and Corollary \ref{cor:generation} as well as  shortened the proof of Theorem \ref{th2} significantly. The author also thanks Chris Schroeder for careful reading of several versions of the paper.

\end{document}